\documentclass[a4paper,UKenglish,cleveref, autoref, thm-restate]{lipics-v2021}

\pdfoutput=1 
\hideLIPIcs  

\title{On Euler Paths and the Maximum Degree Growth of Iterated Higher Order Line Graphs} 

\titlerunning{Euler Paths and the Maximum Degree Growth of Higher Order Line Graphs} 

\author{Aryan Sanghi}{Indian Institute of Technology Kharagpur, India \and \url{https://dragmonx.github.io} }{aryansanghi2004@gmail.com}{}{}

\author{Anubhav Dhar}{Max Planck Institute for Informatics, Germany \and \url{https://anubhavdhar.github.io/}}{anubhavldhar@gmail.com}{https://orcid.org/0009-0006-5922-8300}{}

\author{Sudeshna Kolay}{Indian Institute of Technology Kharagpur, India \and \url{https://cse.iitkgp.ac.in/~skolay/} }{skolay@cse.iitkgp.ac.in}{https://orcid.org/0000-0002-2975-4856}{}

\authorrunning{A. Sanghi, A. Dhar, S. Kolay} 

\ccsdesc[100]{} 

\keywords{Line graph, Euler Path, Maximum Degree} 

\category{} 

\relatedversion{} 

\nolinenumbers 

\usepackage{longtable}

\definecolor{thechosenone}{rgb}{0,0.1,0.5}

\usepackage{longtable}
\usepackage{hyperref}
\usepackage{amssymb,amsmath}
\usepackage{color}
\usepackage{makecell}
\usepackage{mathtools}
\usepackage{bbm}
\usepackage{setspace}

\usepackage{tikz}
\usetikzlibrary{arrows,positioning}

\newcommand{\defproblem}[3]{
  \vspace{1mm}
\begin{center}
\begin{nolinenumbers}
\noindent\fbox{

  \begin{minipage}{0.95\textwidth}
  \begin{tabular*}{\textwidth}{@{\extracolsep{\fill}}l} \textsc{\underline{#1}} \\ \end{tabular*}\vspace{1ex}
  {\bf{Input:}} #2  \\
  {\bf{Question:}} #3
  \end{minipage}

  }
  \end{nolinenumbers}
\end{center}
  \vspace{1mm}
}

\newdimen\prevdp
\def\leftlabel#1{\noalign{\prevdp=\prevdepth
   \kern-\prevdp\nointerlineskip\vbox to0pt{\vss\hbox{\ensuremath{#1}}}\kern\prevdp}}

\usepackage{url}

\usepackage{enumerate}

\usepackage{xspace}

\let\oldlambda\lambda
\renewcommand{\lambda}{\ensuremath{\oldlambda}\xspace}
\let\oldalpha\alpha
\renewcommand{\alpha}{\ensuremath{\oldalpha}\xspace}
\let\oldDelta\Delta
\renewcommand{\Delta}{\ensuremath{\oldDelta}\xspace}

\newcommand{\OO}{\ensuremath{\mathcal O}\xspace}

\usepackage{nicefrac}

\newcommand{\ignore}[1]{}

\renewcommand{\leq}{\leqslant}
\renewcommand{\geq}{\geqslant}
\renewcommand{\ge}{\geqslant}
\renewcommand{\le}{\leqslant}

\usepackage{enumitem}
\setlist[enumerate]{labelwidth=!, labelindent=0pt}

\usepackage{rotating}
\usepackage{amssymb}
\usepackage{latexsym}
\usepackage{epsfig}
\usepackage{xcolor}
\usepackage{url}
\usepackage{caption}
\usepackage{subcaption}
\usepackage{array}
\usepackage{multirow}
\usepackage{enumerate}
\usepackage{pdflscape}
\usepackage{amsmath,graphicx,algorithm,algpseudocode,amsfonts}
\usepackage{epstopdf}
\usepackage{float}
\floatstyle{ruled}
\newfloat{Algorithms}{!thb}{lop}
\floatname{Algorithms}{Algorithm}

\allowdisplaybreaks[1]

\algnewcommand\algorithmicinput{\textbf{Input:}}
\algnewcommand\INPUT{\item[\algorithmicinput]}
\algnewcommand\algorithmicoutput{\textbf{Output:}}
\algnewcommand\OUTPUT{\item[\algorithmicoutput]}
\algnewcommand{\LineComment}[1]{\State \(\triangleright\) #1}

\usepackage{pifont}

\usepackage{mathtools}

\usepackage{algorithm}
\usepackage{algpseudocode}


\usepackage{comment}

\usepackage{cleveref}
\definecolor{cblue}{RGB}{0, 0, 128}
\crefname{theorem}{Theorem}{\bf Theorems}
\crefname{observation}{Observation}{\bf Observations}
\crefname{corollary}{Corollary}{\bf Corollary}
\crefname{lemma}{Lemma}{\bf Lemmata}
\crefname{corollary}{Corollary}{\bf Corollaries}
\crefname{proposition}{Proposition}{\bf Propositions}
\crefname{definition}{Definition}{\bf Definitions}
\crefname{claim}{Claim}{\bf Claims}
\crefname{reductionrule}{Reduction rule}{\bf Reduction rules}
\usepackage{color,soul} 

\newcommand{\LGEPI}{\textsc{LGEPI}}
\newcommand{\MDGPI}{\mathrm{MDGPI}}
\newcommand{\dgc}{\mathsf{dgc}}
\usepackage{todonotes}

\begin{document}

\maketitle
\begin{abstract}
Given a simple graph $G$, its \emph{line graph}, denoted by $L(G)$, is obtained by representing each edge of $G$ as a vertex, with two vertices in $L(G)$ adjacent whenever the corresponding edges in $G$ share a common endpoint. By applying the line graph operation repeatedly, we obtain \emph{higher order line graphs}, denoted by $L^{r}(G)$. In other words, $L^{0}(G) = G$, and for any integer $r \ge 1$, $L^{r}(G) = L(L^{r-1}(G))$.

In this work, we conduct a structural analysis of higher order line graphs. We consider two problems that are solvable on graphs in polynomial time: (i) finding whether an Euler path exists, (ii) finding the maximum degree of a graph. Given a graph $G$ on $n$ vertices, we wish to efficiently find out (i) if $L^k(G)$ has an Euler path, (ii) the value of $\Delta(L^k(G))$. Note that the size of a higher order line graph could be much larger than that of $G$. 

For the first question, we show that for a graph $G$ with $n$ vertices and $m$ edges the largest $k$ where $L^k(G)$ has an Euler path satisfies $k = \mathcal O(nm)$. We also design an $\mathcal{O}(n^2m)$-time algorithm to output all $k$ such that $L^k(G)$ has an Euler path. 

For the second question, we study the growth of maximum degree of $L^k(G)$, $k \ge 0$, for a graph $G$. It is easy to calculate $\Delta(L^k(G))$ when $G$ is a path, cycle or a claw. Any other connected graph is called a \emph{prolific graph}~\cite{caro2022index} and we denote the set of all prolific graphs by $\mathcal G$. We extend the works of Hartke and Higgins~\cite{stephen1999growth} to show that for any prolific graph $G$, there exists a constant rational number $\dgc(G)$ and an integer $k_0$ such that for all $k \ge k_0$, $\Delta(L^k(G)) = \dgc(G) \cdot 2^{k-4} + 2$. We show that $\{\dgc(G) \mid G \in \mathcal G\}$ has first, second, third, fourth and fifth minimums, namely, $c_1 = 3$, $c_2 = 4$, $c_3 = 5.5$, $c_4 = 6$ and $c_5=7$; the third minimum stands out surprisingly from the other four. Moreover, for $i \in \{1, 2, 3, 4\}$, we provide a complete characterization of $\mathcal G_i = \{\dgc(G) = c_i \mid G \in \mathcal G \}$. Apart from this, we show that the set $\{\dgc(G) \mid G \in \mathcal G, 7 < \dgc(G) < 8\}$ is countably infinite.

The results in this paper contribute to a deeper understanding of how efficiently checkable graph properties evolve under repeated line graph transformations.
\end{abstract}

\section{Introduction}

The study of higher order line graphs forms a rich and classical area of graph theory with deep connections to structure theory, forbidden subgraph characterization, and extremal graph properties. For a simple connected graph $G$, its \emph{line graph} $L(G)$ is defined as the graph whose vertices correspond to the edges of $G$, with two vertices adjacent in $L(G)$ if and only if their corresponding edges in $G$ share a common endpoint. The $r$-th order line graph of $G$, denoted as $L^r(G)$, is defined inductively as $L^0(G) = G$ and $L^r(G) = L(L^{r-1}(G))$ for $r \ge 1$.

Higher order line graphs have been investigated from multiple viewpoints—combinatorial, algorithmic, and structural. Early foundational work by Hemminger and Beineke~\cite{hemminger1978line} characterized all line graphs in terms of forbidden induced subgraphs, leading to the celebrated list of nine minimal forbidden configurations for the class of line graphs~\cite{beineke1970characterization}. Subsequent studies refined these results and examined their implications for higher order line graph constructions and their structural invariants.


It is clear from the definition of line graphs and higher order line graphs, that it is possible that for a graph $G$ on $n$ vertices, the $r$-th order line graph $L^{r}(G)$ has as many as $2^{\mathcal O (k^2)} \cdot n^{\mathcal O(k)}$ vertices (for example the complete graph on $n$ vertices). Thus, it is interesting to see the evolution of efficiently checkable graph properties in the infinite sequence of graphs $\{G,L(G),L^2(G),\ldots,L^i(G),\ldots\}$ created by iterated applications of the line graph operation on a graph $G$. In this paper, we are particularly interested in two polynomial-time checkable graph properties: (i) existence of an Euler path in a graph, (ii) calculating the maximum degree of a graph. We wish to analyze the two questions with respect to the number $n$ of vertices in the initial graph $G$.


\subsection{Previous Results}
Beineke~\cite{beineke1970characterization} proved that a graph $G$ is a line graph if and only if it contains none of nine specific graphs as induced subgraphs, which includes the \emph{claw} as one of the forbidden induced subgraphs. The \emph{claw} ($K_{1,3}$) is the graph of four vertices, say $a, b, c, d$, and three edges $ab, ac, ad$. Šoltés~\cite{soltes1994forbidden} later reduced this forbidden list to seven graphs, providing a more succinct formulation of the same characterization. Hemminger and Beineke~\cite{hemminger1978line} showed that for any subgraph $H$ of $G$, the corresponding $L(H)$ is an induced subgraph of $L(G)$, which provides an inductive mechanism for understanding the structure of higher order line graphs. The recent work by Sanghi et. al~\cite{sanghi2024forbidden} extends these ideas to higher order iterations, providing a forbidden subgraph-based framework for identifying when a graph $G$ can be expressed as $L^k(H)$ for some $H$. They introduce the concept of a \emph{pure induced line subgraph}, a subgraph $G'$ of a line graph $G = L(H)$ satisfying $G' = L(H')$ for an induced subgraph $H'$ of $H$. This characterization was further generalized: $G = L^n(H)$, $n \ge 2$ for some $H$, if $L^{-i}(G)=L^{n-i}(H)$ is claw-free (i.e. does not contain a claw as an induced subgraph) for all $i$, $1 \le i \le n-1$ and $G$ avoids a prescribed finite family of forbidden induced subgraphs.

An Euler circuit of a graph is a walk in the graph starting and ending at the same vertex, and traversing every edge of the graph exactly once. A graph is said to be Eulerian if it has an Euler circuit. Prisner~\cite{Erich2000Euler} studied whether higher order line graphs are Eulerian. The work analyzed the values of $k$ such that $L^k(G)$ is Eulerian. He established that:

\begin{proposition}[Prisner~\cite{Erich2000Euler}]
A connected graph $G$ has some iterated line graph $L^t(G)$ that is Eulerian if and only if $G$ is a path, or all its vertices have degrees of the same parity, or $G$ is bipartite with every edge joining vertices of opposite parity.
\end{proposition}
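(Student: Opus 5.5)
The plan is to compute degrees in $L(G)$ directly and then run an induction along the sequence $G, L(G), L^2(G), \ldots$. The starting observation is that a vertex $uv$ of $L(G)$ has degree $d(u)+d(v)-2$. So its degree is even iff $d(u)$ and $d(v)$ have the same parity. For connected $G$ with at least one edge, $L(G)$ is connected, so by Euler's theorem $L(G)$ is Eulerian iff every edge of $G$ joins vertices of equal parity. Since $G$ is connected, this happens iff all vertices of $G$ have the same parity.

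Call a connected graph \emph{good} if all its vertex degrees have the same parity, or if it is bipartite with every edge joining vertices of opposite parity. We first settle $L^2$: $L^2(G)$ is Eulerian iff all degrees of $L(G)$ have the same parity. All degrees $d(u)+d(v)-2$ even means every edge of $G$ joins equal parities, i.e.\ $G$ is of the first type. All such degrees odd means every edge of $G$ joins opposite parities, i.e.\ parity classes give a proper $2$-colouring, so $G$ is of the second type. Hence $L(G)$ has all degrees of one parity iff $G$ is good.

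For the \textbf{``if'' direction}:
\begin{itemize}
\item If $G$ is a path, the iterated line graphs are shorter paths and eventually reach $K_1$, which is trivially Eulerian.
\item If $G$ has all degrees of the same parity, then $L(G)$ is Eulerian.
\item If $G$ is parity-bipartite, then $L^2(G)$ is Eulerian.
\item If $G$ itself has all even degrees, then $t=0$ works.
\end{itemize}

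For the \textbf{``only if'' direction}, the key lemma is: \emph{if $H$ is connected, not a path, and not good, then $L(H)$ is connected, not a path, and not good.} This gives the result by induction. If $L^t(G)$ is Eulerian with $t\ge 2$, then $L^{t-1}(G)$ has all degrees of the same parity, so $L^{t-2}(G)$ is good. Applying the lemma repeatedly backwards forces $G$ to be good or a path. The small cases $t\in\{0,1\}$ follow from the first paragraph, since an Eulerian graph has all degrees even and so is good.

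To prove the lemma, we check each property of $L(H)$ in turn.
\begin{itemize}
\item \emph{Not a path.} $L(H)$ is a path only if $H$ is a path: a vertex of degree $\ge 3$ in $H$ yields a triangle in $L(H)$, and $L(\text{cycle})$ is a cycle.
\item \emph{Not all degrees of one parity.} By the computation above, this would force $H$ to be good.
\item \emph{Not parity-bipartite.} If $L(H)$ were parity-bipartite, it would in particular be triangle-free, so $\Delta(H)\le 2$. Then $H$ is a path or a cycle, and a cycle has all degrees $2$, hence is good. Both are excluded.
\end{itemize}

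The main obstacle I expect is bookkeeping of degenerate cases rather than any deep step. These include graphs with no edges (where $L(H)$ is empty, the case handled by the path/$K_1$ clause), and making sure ``path'' includes the one-vertex graph. We also need connectivity of $L(H)$, which holds since $H$ is connected with at least one edge.
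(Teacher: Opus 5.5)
Your proof is correct and complete. The paper does not prove this proposition: it quotes it from Prisner~\cite{Erich2000Euler} as background, so there is no in-paper argument to compare your route against.

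The closest tool in the paper is Observation~\ref{obs:odd-critical}, which is your degree computation phrased as ``a vertex of $L(G)$ has odd degree iff its edge is critical''. In that language, your notion of \emph{good} says that the edges of $G$ are either all non-critical or all critical, i.e.\ that $L(G)$ has no critical edges. Your key lemma reduces the whole infinite sequence to the checks at $L(G)$ and $L^2(G)$. That reduction rests on two facts: a parity-bipartite line graph is triangle-free, so its preimage has maximum degree at most $2$; and a connected graph of maximum degree at most $2$ that is not a path is a cycle, which is good.

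The only things to make explicit are the degenerate cases you already flagged.
\begin{itemize}
\item The path clause needs $K_1$ to count as Eulerian (trivial closed walk). That clause is genuinely needed, since for instance $P_4$ is not good.
\item In the parity-bipartite case of the ``if'' direction, note that $G\neq K_2$, because $K_2$ has two odd endpoints and so is not parity-bipartite. Hence a connected parity-bipartite $G\neq K_1$ has a vertex of degree at least $2$, so $L(G)$ has an edge and $L^2(G)$ is nonempty and connected.
\item In the ``only if'' direction, the step ``$L^{t-1}(G)$ has all degrees of the same parity'' uses connectivity of $L^{t-1}(G)$. This holds automatically because line graphs of connected graphs are connected. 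The only exception is when the iterates have run out of edges, and that happens only for paths.
\end{itemize}
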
 

This work also showed that if such a $t$ exists, then $L^r(G)$ is also Eulerian, for all $r \ge t$.

A Hamiltonian cycle of a graph $G$ is a simple cycle containing all vertices of $G$. A graph is said to be Hamiltonian if it has a Hamiltonian cycle. It is known that higher order line graphs eventually become Hamiltonian~\cite{harary1991graph}.

\begin{proposition}\label{prop:eventually-hamiltonian}
    
    For a connected graph $G$ of $n$ vertices which is not isomorphic to a path, $L^k(G)$ is Hamiltonian for all $k \ge n - 3$.    
\end{proposition}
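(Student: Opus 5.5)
The plan is to prove the statement by induction on $k$, using three classical ingredients. First, the line graph of a Hamiltonian graph is Hamiltonian. Second, Harary and Nash-Williams: $L(H)$ is Hamiltonian if and only if $H$ has a \emph{dominating closed trail}, that is, a closed trail such that every edge of $H$ has at least one endpoint on it (or $H$ is a star $K_{1,s}$ with $s\ge 3$). Third, $L(H)$ is connected whenever $H$ is. Since $G$ is connected and not a path, no $L^k(G)$ is a path. If $G$ is a cycle, every $L^k(G)\cong G$ and there is nothing to prove. So I assume $G$ is neither a path nor a cycle, and hence has a vertex of degree at least $3$.

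The backbone is monotonicity. If $L^{j}(G)$ is Hamiltonian, then a Hamiltonian cycle $v_1\dots v_p v_1$ of it yields a Hamiltonian cycle $v_1v_2, v_2v_3, \dots, v_pv_1$ in $L^{j+1}(G)$: consecutive edges share an endpoint, and these edges already cover every vertex. The remaining vertices of $L^{j+1}(G)$, namely the chords, can be spliced in. For a chord $v_iv_l$, insert it between the cycle edges $v_{i-1}v_i$ and $v_iv_{i+1}$, all three of which contain $v_i$. Several chords at the same vertex $v_i$ form a clique together with those two cycle edges, so they can all be inserted consecutively. Thus it suffices to exhibit one $j\le n-3$ with $L^j(G)$ Hamiltonian, and then induct upward.

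To find such a $j$, I would use the Harary–Nash-Williams criterion. It suffices to show that $L^{n-4}(G)$ has a dominating closed trail, or, more robustly, that $L^{j}(G)$ has one for some $j\le n-4$. Two standard facts drive this. (a) Every vertex of degree at least $3$ in $H$ creates a triangle in $L(H)$. (b) Each line graph iteration shortens the longest induced path of degree-$2$ vertices (a "thread") by one, so threads get absorbed into cyclic structure. A thread here is a path of internal degree-$2$ vertices between branch vertices or leaves. Its length is at most $n-3$, because at least one branch vertex and its other neighbours lie outside it.

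The argument then tracks the graphs $H_j=L^j(G)$ as follows. After $j$ steps, every thread of $H_j$ has length at most $\max(\ell-j,1)$, where $\ell$ is the longest thread of $G$. Once all threads have length at most $1$, every edge is incident to a vertex lying in a triangle or in a $2$-connected block. I would then build a dominating closed trail in $H_j$ by taking the union of edge-disjoint cycles through all blocks of size at least $3$, connected along the tree-like block structure. To keep the connections edge-disjoint, I would use the fact that cut vertices of $H_j$ ($j\ge1$) lie in cliques coming from vertices of $H_{j-1}$, which have even/odd room for routing.

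I expect the main obstacle to be exactly this step: producing a dominating closed trail rather than merely a family of cycles. A closed trail must be connected and use each edge once, and gluing blocks requires routing through cut vertices without reusing bridges. The cleanest route I would try first avoids this gluing by citing the known result that $L(H)$ is Hamiltonian whenever $H$ is $2$-edge-connected-like enough, for example whenever every edge of $H$ lies in a triangle. I would then show that $L^{j}(G)$ has every edge in a triangle once $j\ge \ell$, and check that $\ell\le n-4$ except in small cases handled directly. Those small cases are when $G$ is a tree with one long thread: for $n\le 4$, $G$ not a path forces $G$ to be the claw or a triangle-based graph, handled by hand. The count $\ell+1\le n-3$ then gives the bound.
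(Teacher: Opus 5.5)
The paper does not prove this proposition; it quotes it as a classical result, so your argument has to stand on its own. Your monotonicity step (splicing each chord in at one of its endpoints) and the reduction to the Harary--Nash-Williams criterion are fine. All of the content, however, lies in exhibiting some $j\le n-3$ with $L^j(G)$ Hamiltonian, and neither of your routes does this.

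Your preferred shortcut is: if every edge of $L^j(G)$ lies in a triangle, then $L^{j+1}(G)$ is Hamiltonian. That implication is true (e.g.\ via Catlin's theorem that such connected graphs are collapsible), but it is too weak for this bound. An edge $xy$ of $L(H)$ lies in no triangle exactly when the common vertex of $e_x,e_y$ has degree $2$ in $H$ and its two neighbours are non-adjacent. So the shortcut first applies at $j$ roughly equal to the longest thread $\ell$, and it gives only $L^{\ell+1}(G)$.

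Your claim that $\ell\le n-4$ outside small cases fails for infinite families. Take the claw with one leg subdivided into a path of length $n-3$, a graph attaining the bound $n-3$. Here $\ell=n-3$ and $L^{n-4}(G)$ still has a pendant vertex; $L^{n-3}(G)$ is Hamiltonian, but your route reaches only $L^{n-2}(G)$.

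Your bound ``thread length $\le n-3$'' is also false once long cycles are present. $C_n$ plus a chord joining two vertices at distance $2$ has a thread of length $n-2$. $C_{n-1}$ plus a pendant edge has a closed run of $n-2$ degree-$2$ vertices. For both, the shortcut needs roughly $n$ iterations. Yet the first graph is already Hamiltonian, and the second has Hamiltonian line graph because its cycle is a dominating closed trail. (Threads also do not only shrink: a pendant edge at a degree-$3$ vertex yields a new degree-$2$ vertex in $L(H)$, so the bound $\max(\ell-j,1)$ is not right either, although such vertices do lie in triangles.)

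So everything rests on the step you postpone: constructing a dominating closed trail in $L^{j}(G)$ for some $j\le n-4$. Your first route has the right shape, but ``wait until all threads have length at most $1$'' is the wrong accounting, for the reasons above. The construction must treat threads differently according to where they sit:
\begin{itemize}
\item threads lying on cycles must be traversed by the trail, not waited out;
\item a pendant thread only needs to shrink to a single pendant edge whose inner endpoint is on the trail, which is exactly what saves the last iteration in the subdivided claw;
\item a thread forming a bridge between two nontrivial parts must shrink until it becomes a cut vertex before the trail can pass through it.
\end{itemize}
Your remark that cut vertices lie in cliques with ``even/odd room for routing'' supplies none of this. As written, the proposal establishes at best that $L^k(G)$ is Hamiltonian for $k\ge \ell+1$, where $\ell$ (closed threads included) can be as large as $n-1$, not for all $k\ge n-3$.
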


Parallel to these structural developments, Hartke and Higgins~\cite{stephen1999growth} analyzed the asymptotic behavior of $\Delta(L^k(G))$ (where $\Delta(H)$ denotes the maximum degree of the graph $H$), defining the \textit{Maximum Degree Growth Property (MDGP)}:

\begin{definition}[Hartke and Higgins~\cite{stephen1999growth}]
    A graph $G$ is said to satisfy Maximum Degree Growth Property (MDGP) if $\Delta(L(G)) = 2\Delta(G) - 2$.
\end{definition}

They show that for every connected graph $G$ which is not a path, there exists a finite integer $K$ (the \textit{MDGP Index}) such that MDGP holds for all $L^k(G)$ with $k \ge K$, and provided the recurrence
\(
\Delta(L^k(G)) = (\Delta(L^K(G)) - 2)\,2^{k-K} + 2
\) (see Definition~\ref{def:MDGPI}).
This result implies that the maximum degree of iterated line graphs stabilizes into a predictable exponential growth regime after finitely many iterations.

\subsection{Our Contributions}
In this paper, we first extend the results of Prisner~\cite{Erich2000Euler} on Euler circuits to Euler paths. An Euler path is a walk that starts and ends at two distinct vertices of the given graph, and traverses each edge exactly once. The tools required to characterise the existence of an Euler path in a higher order line graph $L^k(G), k\geq 1$ are very different from those used in~\cite{Erich2000Euler}. We show that for a graph $G$ with $n$ vertices and $m$ edges the largest $k$
where $L^k (G)$ has an Euler path satisfies $k = \OO(nm)$. We also design an $\OO(n^2m)$-time algorithm to
output all $k$ such that $L^k (G)$ has an Euler path. Our results rely on the analysis of local relations such as the characterisation of the initial graph $G$ when one of $L(G)$ or $L^2(G)$ does not have any Euler path. Note that our results are in contrast to those derived in~\cite{Erich2000Euler}, as we show that eventually higher order line graphs will not have Euler paths.

The second question we address in this paper extends the work of Hartke and Higgins~\cite{stephen1999growth}. We study the growth of maximum degree of $L^k(G)$, $k \ge 0$, for a graph $G$. It is easy to calculate $\Delta(L^k(G))$ when $G$ is a path, cycle or a claw. Any other connected graph is called a \emph{prolific graph}~\cite{caro2022index} and we denote the set of all prolific graphs by $\mathcal G$. We show that for any prolific graph $G$, there exists a constant rational number $\dgc(G)$ and an integer $k_0$ such that for all $k \ge k_0$, $\Delta(L^k(G)) = \dgc(G) \cdot 2^{k-4} + 2$. We show that $\{\dgc(G) \mid G \in \mathcal G\}$ has first, second, third, fourth, and fifth minimums, namely, $c_1 = 3$, $c_2 = 4$, $c_3 = 5.5$, $c_4 = 6$ and $c_5=7$; the third minimum stands out surprisingly from the other four. Moreover, for $i \in \{1, 2, 3, 4\}$, we provide a complete characterization of $\mathcal G_i = \{\dgc(G) = c_i \mid G \in \mathcal G \}$. Apart from this, we show that the set $\{\dgc(G) \mid G \in \mathcal G, 7 < \dgc(G) < 8\}$ is countably infinite. This landscape is depicted in Figure~\ref{fig:dgc-landscape}.

\begin{figure}
    \centering
    \includegraphics[width=\linewidth]{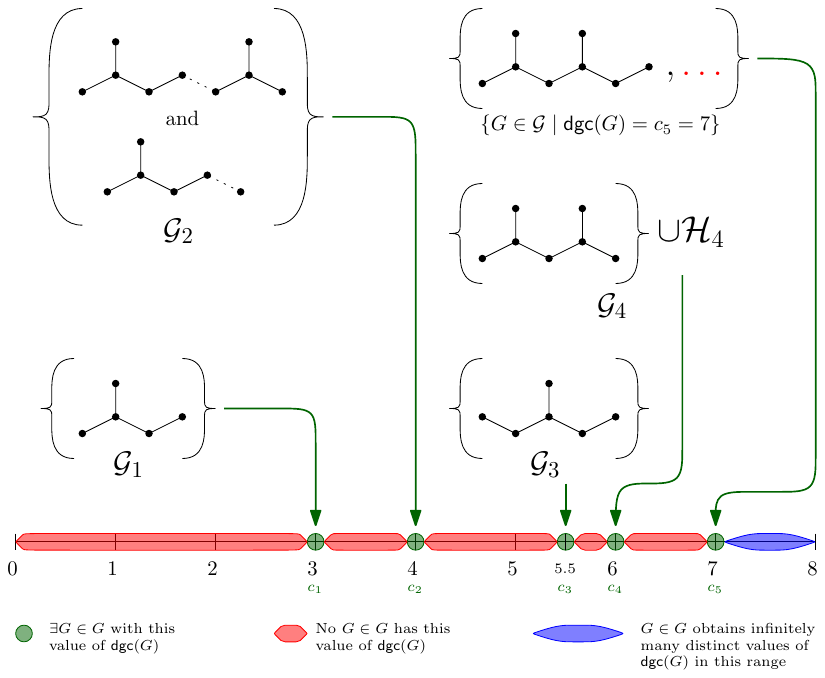}
    \caption{Possible values of $\dgc(G)$. Class $\mathcal H_4$ is defined later (see Definition~\ref{def:H_4}). The graph class corresponding to $\dgc(G) = c_5$ is not fully characterised in our work.}
    \label{fig:dgc-landscape}
\end{figure}
\section {Preliminaries}

\subparagraph*{Notations.} All graphs in this paper are undirected, unweighted, connected and simple. For a graph $G$, we denote the set of vertices as $V(G)$ and the set of edges as $E(G)$. The set $N_G(v) = \{u \mid uv \in E(G)\}$ denotes the open neighbourhood of vertex $v$ in the graph $G$. The degree of a vertex $v$ in $G$ is $d_G(v) = |N_G(v)|$. For a set of vertices $S \subseteq V$, the subgraph of $G$ induced by $S$ is denoted by $G[S]$. A graph $H$ is a \emph{subgraph} of a graph $G$ if there exists a mapping $\phi : V(H) \to V(G)$ such that for all $\{u,v\} \in E(H)$, $\{\phi(u), \phi(v)\} \in E(G)$; we denote this by $H \subseteq G$. The path on $n$ vertices is denoted by $P_n$ and the cycle on $n$ vertices is denoted by $C_n$. $K_n$ denotes the complete graph on $n$ vertices, and $K_{a,b}$ denotes the complete bipartite graph with $a$ vertices in one part and $b$ vertices in the other. Graphs $K_{1, r}$ for $r \ge 1$ are called \emph{star} graphs, and the graph $K_{1.3}$ is called a \emph{claw}. 

\subparagraph*{Line graphs.} For a graph $G$, the line graph $L(G)$ denotes the graph with $\{v_e \mid e \in E(G)\}$ as the vertex set and $\{v_{e_1}v_{e_2} \mid e_1, e_2 \in E(G), e_1\text{ and } e_2 \text{ share a vertex}\}$ as the edge set. We say that the edge $e \in E(G)$ and the vertex $v_e \in V(L(G))$ \emph{correspond} to each other. We define $L^0(G) = G$ and $L^r(G) = L(L^{r-1}(G))$, for all $r \ge 1$. We state some well-known results for line graphs~\cite{stephen1999growth}.

\begin{proposition}\label{Deguv}
    For a graph $G$, let $uv\in E(G)$. Then, in $L(G)$, the degree of the vertex corresponding to the edge $uv$ is $d_G(u)+d_G(v)-2$.
\end{proposition}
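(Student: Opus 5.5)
The plan is to count directly the neighbours of $v_{uv}$ in $L(G)$. By the definition of the line graph, a vertex $v_f \in V(L(G))$ with $f \neq uv$ is adjacent to $v_{uv}$ exactly when the edge $f$ shares an endpoint with $uv$, that is, when $u \in f$ or $v \in f$. So I will write the neighbourhood as
\[
N_{L(G)}(v_{uv}) = \{ v_f \mid f \in E(G),\ f \neq uv,\ u \in f \} \cup \{ v_f \mid f \in E(G),\ f \neq uv,\ v \in f \},
\]
and compute the sizes of the two sets and of their intersection.

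The first set is in bijection with the edges at $u$ other than $uv$. These are the edges $uw$ with $w \in N_G(u)\setminus\{v\}$, so the set has size $d_G(u)-1$. By the same argument the second set has size $d_G(v)-1$.

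The only point that needs care is whether the two sets overlap. A common element would be an edge $f \ne uv$ containing both $u$ and $v$. Since $G$ is simple, the only edge joining $u$ and $v$ is $uv$ itself, and there are no loops, so the intersection is empty. The union is therefore disjoint, and $d_{L(G)}(v_{uv}) = (d_G(u)-1)+(d_G(v)-1) = d_G(u)+d_G(v)-2$.

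There is no real obstacle here. The one thing to state explicitly is the use of simplicity of $G$ (no parallel edges, no loops) for the disjointness, together with the fact that $L(G)$ is itself simple, so each shared-endpoint edge contributes exactly one neighbour.
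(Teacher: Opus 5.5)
Your counting argument is correct and complete: you split the neighbours of $v_{uv}$ into edges at $u$ and edges at $v$ other than $uv$, use simplicity to make the two sets disjoint, and get exactly $d_G(u)+d_G(v)-2$. The paper gives no proof of its own and quotes this as a well-known fact from Hartke and Higgins; your direct count is the standard argument behind it.
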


\begin{proposition}[Induced subgraph preservation]\label{subindsub}
If $H$ is a subgraph of $G$, then $L(H)$ is an induced subgraph of $L(G)$.
\end{proposition}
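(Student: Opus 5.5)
The plan is to construct an explicit injection of $L(H)$ into $L(G)$ whose image is an induced subgraph. I read ``subgraph'' as the paper defines it, via a map $\phi : V(H) \to V(G)$ sending edges to edges. The statement only makes sense when $\phi$ is injective (so $H$ is genuinely a subgraph), and I will assume this throughout. Under that assumption, $\phi$ induces a map on edges,
\[
\psi : E(H) \to E(G), \qquad \psi(\{u,v\}) = \{\phi(u),\phi(v)\}.
\]
The map $\psi$ is well defined because $\phi$ preserves edges. It is injective because $\phi$ is injective on vertices: two distinct unordered pairs map to distinct unordered pairs. So $v_e \mapsto v_{\psi(e)}$ is an injection from $V(L(H))$ into $V(L(G))$. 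Let $S$ be its image. The goal is then to show that $L(G)[S]$ is isomorphic to $L(H)$ via this map.

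The key step is that adjacency is preserved in both directions. Take distinct $e_1, e_2 \in E(H)$.
\begin{itemize}
\item If $e_1$ and $e_2$ share an endpoint $w$ in $H$, then $\psi(e_1)$ and $\psi(e_2)$ both contain $\phi(w)$. Hence $v_{\psi(e_1)}v_{\psi(e_2)}$ is an edge of $L(G)$.
\item Conversely, suppose $\psi(e_1)$ and $\psi(e_2)$ share an endpoint $x \in V(G)$. Then $x = \phi(a)$ for some $a \in e_1$ and $x = \phi(b)$ for some $b \in e_2$. Injectivity of $\phi$ forces $a = b$, so $e_1$ and $e_2$ share the endpoint $a$ in $H$.
\end{itemize}
Therefore two vertices of $S$ are adjacent in $L(G)$ exactly when the corresponding vertices are adjacent in $L(H)$. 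This means $L(G)[S] \cong L(H)$, so $L(H)$ is an induced subgraph of $L(G)$.

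The main obstacle is not combinatorial but definitional. The paper's definition of subgraph does not explicitly require $\phi$ to be injective. Without injectivity the claim fails: a path $P_3$ could be folded onto a single edge, and two distinct edges of $H$ would collapse to one edge of $G$. I will therefore state explicitly that $\phi$ is taken to be injective, as the phrase ``subgraph'' intends. With that in place the converse direction of the adjacency argument goes through immediately.
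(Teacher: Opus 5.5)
Your proof is correct and complete. The paper does not prove Proposition~\ref{subindsub}; it lists it among well-known facts and cites Hartke and Higgins. Your argument is the standard direct verification: $\psi$ embeds $E(H)$ into $E(G)$, and two image edges meet in $G$ exactly when the original edges meet in $H$.

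You correctly identify the one step that needs care. The converse direction uses injectivity of $\phi$ on vertices, not merely injectivity of $\psi$ on edges. Edge-injectivity alone would not suffice: wrapping $P_4$ around $C_3$ is injective on edges, yet $L(P_4)=P_3$ is not an induced subgraph of $L(C_3)=C_3$.

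Your caveat about the paper's definition is also well taken. Read literally, the definition describes a graph homomorphism, and your folding of $P_3$ onto an edge refutes the proposition under it. Later results would break as well: every graph with an edge would ``contain'' $C_4$, so Lemma~\ref{CycPen} would force $\dgc(G_1)\ge 6$. The paper clearly intends $\phi$ to be injective. Your map $v_e \mapsto v_{\psi(e)}$ is itself injective, so the proposition can be iterated exactly as in Lemma~\ref{Comp1}.
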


For graphs $H$ and $G = L(H)$, we say that $H$ is a \emph{preimage} of $G$ under the line graph operation. It is well known that for any graph $G$ not isomorphic to a triangle, either no preimage of $G$ exists under the line graph operation, or it is unique~\cite{whitney1992congruent}.
\begin{proposition}
    For non-isomorphic graphs $G_1$, $G_2$, we have $L(G_1) = L(G_2)$ if and only if $G_1$ is isomorphic to $C_3$ (a triangle) and $G_2$ is isomorphic to $K_{1,3}$ (a claw), or vice versa. 
\end{proposition}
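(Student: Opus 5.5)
The direction ``if'' is immediate. Both $L(C_3)$ and $L(K_{1,3})$ are $K_3$: in the triangle any two of the three edges share an endpoint, and in the claw all three edges share the centre. So the work lies in the converse. Here I will follow the classical route to Whitney's theorem. It reconstructs $G$ from $L(G)$ through a \emph{Krausz partition} and shows that this partition is forced except in a few tiny cases. Throughout, graphs are connected, as stipulated in the Preliminaries.

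\textbf{Step 1 (Krausz partitions).} For a graph $H$ without isolated vertices, consider the cliques $C_v=\{v_e : v\in e\}$ for $v\in V(H)$ with $d_H(v)\ge 1$. Together these form a family of cliques of $L(H)$ with two properties:
\begin{itemize}
\item every edge of $L(H)$ lies in exactly one $C_v$;
\item every vertex $v_e$ with $e=uv$ lies in exactly the two cliques $C_u$ and $C_v$. When $d_H(u)=1$, the clique $C_u$ is a singleton; I keep singletons so that every vertex lies in exactly two cliques.
\end{itemize}
Conversely, from any such family $\mathcal K$ one recovers $H$ up to isomorphism. Take one vertex per clique, and for each vertex $x$ of $L(H)$ join the two cliques containing $x$. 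Hence if $L(G_1)\cong L(G_2)$ with $G_1\not\cong G_2$, then $L(G_1)$ carries two genuinely different Krausz families. It therefore suffices to show that the Krausz family of $L(G)$ is unique unless $L(G)\cong K_3$.

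\textbf{Step 2 (triangle classification).} Each triangle of $L(G)$ is of one of two kinds:
\begin{itemize}
\item \emph{odd} (a star), coming from three edges at a common vertex;
\item \emph{even}, coming from a triangle of $G$.
\end{itemize}
For an even triangle $\{ab,bc,ca\}$, any other edge of $G$ meets either $0$ or exactly $2$ of $ab,bc,ca$. For a star $\{va,vb,vc\}$, any other edge at $v$ is adjacent to all three, and an edge at $a$ not through $v$ is adjacent to exactly one.

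So a triangle of $L(G)$ that has some outside vertex adjacent to exactly $1$ or $3$ of its vertices is necessarily a star. In any Krausz family, such a triangle must lie inside a single clique of the family: otherwise its three edges would lie in three different cliques, and those cliques would form a triangle of $G$. Using this, I will argue that the cliques of any Krausz family are determined by $L(G)$:
\begin{itemize}
\item each maximal clique of size $\ge 4$ is a star clique $C_v$, since $K_4$ cannot be an even triangle structure;
\item each remaining edge of $L(G)$ is assigned by looking at its triangles.
\end{itemize}
The only ambiguity arises when there is a triangle every outside vertex of which meets $0$ or $2$ of its vertices, and which could be read either as a star or as a triangle.

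\textbf{Step 3 (small cases).} The ambiguous triangles occur only when $G$ is very small. I will show that an ambiguous triangle forces every vertex of $G$ near it to have degree at most $3$, and forces $G$ to have at most $4$ vertices. The candidates are then $K_3$, $K_{1,3}$, $K_{1,3}+e$, $K_4-e$ and $K_4$. For each candidate I check by hand that the reconstruction in Step 1 yields the same graph whichever reading is chosen, except for the pair $K_3$/$K_{1,3}$.

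The main obstacle is this bounding argument in Step 3: showing that a triangle of $L(G)$ that is ambiguous, together with connectivity, confines $G$ to at most four vertices. I plan to do it by taking a shortest edge leaving the relevant three vertices and deriving a vertex adjacent to exactly one vertex of the triangle, which would resolve the ambiguity.
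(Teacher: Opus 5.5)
The paper does not prove this statement. It quotes it as Whitney's theorem and supports it only by a citation. Your sketch is therefore a genuinely different, self-contained route: the standard Krausz-partition/odd-triangle proof, and it is correct in outline. The citation buys brevity. Your argument buys independence from it and shows exactly where the exception comes from. A star triangle $\{va,vb,vc\}$ of $L(G)$ is even precisely when $d_G(v)=3$ and $V(G)=\{v,a,b,c\}$, i.e.\ $G\in\{K_{1,3},K_{1,3}+e,K_4-e,K_4\}$. Among the line graphs of $K_3$ and these four (namely $K_3$, the diamond, the four-spoke wheel and the octahedron), only $K_3$ has Krausz families with non-isomorphic reconstructions.

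Several statements need tightening.

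\emph{Step~1's reduction.} As stated it is false: the Krausz family of $L(G)$ need not be unique outside $L(G)\cong K_3$. The diamond $L(K_{1,3}+e)$, the four-spoke wheel $L(K_4-e)$ and the octahedron $L(K_4)$ each carry two distinct Krausz families: either triangle of the diamond, either way of grouping the hub's spokes into two triangles, or either colour class of faces. What you need, and what Step~3 actually checks, is that every Krausz family reconstructs a graph isomorphic to $G$.

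\emph{Step~2's edge count.} An edge at $a$ not through $v$ need not meet exactly one vertex of the star triangle: $ab$ and $ac$ meet two. The correct statement concerns an edge from $a$ to a vertex outside $\{v,a,b,c\}$. This exception is exactly what produces $K_{1,3}+e$, $K_4-e$ and $K_4$.

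\emph{The ``main obstacle''.} It is immediate if you argue in the root $G'$ in which the ambiguous triangle is a Krausz clique $C_v$. Then $d_{G'}(v)=3$. If $V(G')\neq\{v,a,b,c\}$, connectivity alone gives an edge from $\{a,b,c\}$ to a new vertex. That edge meets exactly one vertex of the triangle, making the triangle odd. No shortest-path argument is needed. The other root is then found by enumerating the Krausz families of the four small line graphs.

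\emph{``Assigned by looking at its triangles''.} Make this explicit. A Krausz family that uses no even triangle as a clique must consist of:
\begin{itemize}
\item the maximal cliques of size at least $4$ (four pairwise intersecting edges of a simple graph share an endpoint);
\item the odd triangles lying in no $K_4$;
\item the remaining edges, as $2$-cliques;
\item the forced singletons.
\end{itemize}
So two such families coincide.

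For $G=K_2$ the two singleton cliques coincide as sets, so either count them with multiplicity or treat this case separately.
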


\section{Eulerian Path on Higher Order Line Graphs}


For a connected graph $G$, recall that an Euler path (also known as an Eulerian path or an Eulerian trail) is a walk that starts and ends at distinct vertices, and traverses every edge of the graph exactly once. Note that we do not consider Euler circuits as Euler paths. A well known necessary and sufficient condition for the existence of Euler paths is stated below~\cite{Euler2010Path}.
\begin{proposition}\label{prop:two-odd}
    A connected graph has an Euler path if and only if exactly two of its vertices have an odd degree.
\end{proposition}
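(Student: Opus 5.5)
The statement is the classical characterization: a connected graph has an Euler path (open trail through every edge) exactly when precisely two vertices have odd degree. I will prove both directions directly, reducing the sufficiency direction to the existence of an Euler circuit in a connected graph with all degrees even. That circuit lemma is proved separately by the standard maximal-trail argument.

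\emph{Necessity.} Suppose $W = v_0 e_1 v_1 \dots e_m v_m$ is an Euler path with $v_0 \neq v_m$. Every edge is traversed exactly once, so $d_G(x)$ is the number of edge-ends of $W$ at $x$. Each internal occurrence of $x$ in $W$ contributes $2$ (one arriving edge and one leaving edge). The occurrence as $v_0$ contributes $1$, and the occurrence as $v_m$ contributes $1$. Hence $d_G(x)$ is even for every $x \notin \{v_0, v_m\}$, while $d_G(v_0)$ and $d_G(v_m)$ are odd because $v_0 \neq v_m$. So exactly two vertices have odd degree.

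\emph{Sufficiency.} Let $u, w$ be the two odd-degree vertices. Form $G'$ by adding a new vertex $z$ with edges $zu$ and $zw$. We use a new vertex rather than the edge $uw$ so that $G'$ stays simple even if $uw \in E(G)$. The graph $G'$ is connected and every vertex in it has even degree. By the circuit lemma below, $G'$ has an Euler circuit. Rotate the circuit so that it starts at $z$; it then has the form $z, u, \dots, w, z$ (or the same with $u$ and $w$ swapped). Deleting $z$ and its two edges leaves a trail in $G$ from $u$ to $w$ that uses every edge of $G$ exactly once. Since $u \neq w$, this is an Euler path.

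\emph{Circuit lemma.} Let $H$ be a connected graph with all degrees even and at least one edge. Take a longest trail $T$ in $H$.
\begin{enumerate}
\item $T$ is closed. Otherwise its final vertex has been used an odd number of times by $T$, while its degree is even, so some edge at that vertex is unused and $T$ could be extended, contradicting maximality.
\item $T$ uses every edge. Suppose some edge is unused. Since $H$ is connected, there is an unused edge $e$ incident to a vertex $x$ on $T$. Rotate the closed trail $T$ so that it starts and ends at $x$, then append $e$. This gives a longer trail, a contradiction.
\end{enumerate}
Hence $T$ is an Euler circuit of $H$.

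The only step needing care is the circuit lemma, specifically the parity argument that a maximal trail must be closed. The rest is bookkeeping.
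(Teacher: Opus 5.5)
Your proof is correct. For necessity you count parities along the trail. For sufficiency you add a new vertex $z$ adjacent to the two odd vertices, which keeps the graph simple even when $uw\in E(G)$, and then use the longest-trail argument that a connected graph with all degrees even has an Euler circuit. This is the standard argument.

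The paper does not prove this proposition. It quotes it as a classical fact with a citation and only ever uses it as the counting criterion $|ov(\cdot)|=2$. So there is no route in the paper to compare against; your write-up supplies a self-contained proof of a result the paper takes for granted.

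Two phrases in your circuit lemma are worth tightening:
\begin{enumerate}
\item In step 1, say that the final vertex $b$ of an open trail is incident with an odd number of edges of $T$, precisely because $b$ is not the starting vertex.
\item In step 2, justify why some unused edge meets $V(T)$. If none did, the set of vertices outside $V(T)$ would be nonempty (it contains the endpoints of an unused edge). No edge of $H$ could join it to $V(T)$, since such an edge is not on $T$ and would therefore be an unused edge meeting $V(T)$. This contradicts connectivity.
\end{enumerate}
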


Deciding whether $L^k(G)$ has an Euler path can be naively done by checking the number of odd degree vertices of $L^k(G)$. However, such an algorithm would take time exponential in $k$ and size of $G$ to terminate. In this section, we look into various structural properties of $L^k(G)$, aiding us to solve this problem more efficiently. We consider an exhaustive casework which analyzes all possible local structures of the graph $G$ and deduce the values of $k$ for which $L^k(G)$ has an Euler path. This not only provides us with an exact classification of graphs where certain higher order line graphs admit Euler paths, but also results in a polynomial time algorithm deciding whether $L^k(G)$ has an Euler path. In fact, our algorithm is stronger: For an input graph $G$ with $n$ vertices and $m$ edges, in $\OO(n^2m)$ time, the algorithm outputs all $k$ such that $L^k (G)$ has an Euler path. Notice that the higher order line graphs eventually stop admitting Euler paths; such a result has an opposing flavour to Proposition~\ref{prop:eventually-hamiltonian}.


\subsection{Definitions and tools}\label{subsec:toolsEuler}

We start by laying down some definitions and observations relevant to the study of the existence of Euler paths in higher order line graphs.

\begin{definition}
     For a graph $G$ and a set $S\in V(G)$, let  $N_G(S) = \bigcup_{v \in S} N_G(v)$. Let $ov(G)$ denote the set of all vertices of a graph $G$ that have an odd degree and let $ev(G)$ denote the set of all vertices of a graph $G$ with an even degree. 
\end{definition}

We introduce the notion of an edge being \emph{critical} depending on the degrees of its endpoints.

\begin{definition}
    We call an edge $v_1v_2\in E(G)$ critical if either $v_1\in ov(G), v_2\in ev(G)$ or $v_1\in ev(G), v_2\in ov(G)$. Otherwise, the edge is said to be non-critical. 
\end{definition}

We use the following notation that associates edges of a graph to vertices in its line graph.

\begin{definition}
    For a vertex $v\in V(L(G))$, let $e_v\in E(G)$ be its corresponding edge in $G$.
\end{definition}

Proposition~\ref{Deguv} directly implies the following relation between critical edges of $G$ and the odd degree vertices of $L(G)$. 

\begin{observation}\label{obs:odd-critical}
For a graph $G$, a vertex $v\in V(L(G))$ in its line graph has an odd degree if and only if $e_v\in E(G)$ is a critical edge. Moreover, if $v_1v_2\in E(L(G))$ is a critical edge, then $e_{v_1}\in E(G)$ and $e_{v_2}\in E(G)$ are incident on a common vertex in $G$, and exactly one among $e_{v_1}$ and $e_{v_2}$ is a critical edge.
\end{observation}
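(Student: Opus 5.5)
The statement has two parts, and each follows from Proposition~\ref{Deguv} by checking parities.

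For the first part, take $v\in V(L(G))$ with $e_v = uw\in E(G)$. By Proposition~\ref{Deguv}, $d_{L(G)}(v) = d_G(u)+d_G(w)-2$. This has the same parity as $d_G(u)+d_G(w)$. That sum is odd exactly when one of $d_G(u), d_G(w)$ is odd and the other is even. In the notation above, this means exactly one of $u,w$ lies in $ov(G)$ and the other in $ev(G)$, which is precisely the definition of $uw$ being critical. Hence $v\in ov(L(G))$ if and only if $e_v$ is critical.

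For the second part, let $v_1v_2\in E(L(G))$ be a critical edge. The first assertion, that $e_{v_1}$ and $e_{v_2}$ share an endpoint in $G$, holds for every edge of $L(G)$ by the definition of the line graph. So it needs no criticality at all.

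For the parity claim, criticality means exactly one of $v_1,v_2$ lies in $ov(L(G))$. By the first part applied to $v_1$ and to $v_2$ separately, $v_i\in ov(L(G))$ holds if and only if $e_{v_i}$ is critical in $G$. It follows that exactly one of $e_{v_1}, e_{v_2}$ is a critical edge of $G$.

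No step is a genuine obstacle. The only point needing care is to apply the equivalence from the first part vertex by vertex, so that "exactly one endpoint is odd" in $L(G)$ translates directly into "exactly one corresponding edge is critical" in $G$.
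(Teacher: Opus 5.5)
Your proof is correct and matches the paper's approach: the paper gives no separate proof and simply notes that the observation follows directly from Proposition~\ref{Deguv}. Your parity check on $d_G(u)+d_G(w)-2$, followed by applying that equivalence to each endpoint of a critical edge of $L(G)$, is exactly that intended argument written out.
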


The next three results are about relations between critical edges of $G$ and those of $L(G)$.
\begin{observation}\label{obs:critical-and-non-critical}
    In a graph $G$, let $uv\in E(G)$ be a critical edge and $uw\in E(G)$ be a non-critical edge. Let $uv$ correspond to the vertex $x\in V(L(G))$ and $uw$ correspond to the vertex $y\in V(L(G))$. Then, $xy\in E(L(G))$ is a critical edge.
\end{observation}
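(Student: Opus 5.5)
The plan is a direct parity computation based on Proposition~\ref{Deguv}. In $L(G)$ we have $d_{L(G)}(x) = d_G(u) + d_G(v) - 2$ and $d_{L(G)}(y) = d_G(u) + d_G(w) - 2$. The first step is to record the parity of each. Since $uv$ is critical, $d_G(u)$ and $d_G(v)$ have opposite parities, so $d_G(u)+d_G(v)$ is odd. Subtracting $2$ does not change parity, so $d_{L(G)}(x)$ is odd. This is also what Observation~\ref{obs:odd-critical} gives directly, since $e_x = uv$ is critical.

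The second step treats $y$. Because $uw$ is non-critical, $d_G(u)$ and $d_G(w)$ have the same parity. Hence $d_G(u)+d_G(w)$ is even, and so $d_{L(G)}(y)$ is even. Equivalently, Observation~\ref{obs:odd-critical} applied to $e_y = uw$ shows that $y$ has even degree.

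It remains to check that $xy$ is indeed an edge of $L(G)$. The edges $uv$ and $uw$ share the endpoint $u$, and they are distinct because one is critical and the other is not. So $x \neq y$ and $xy \in E(L(G))$ by the definition of the line graph. Since exactly one endpoint of $xy$ has odd degree, $xy$ is critical.

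There is no real obstacle here. The only point needing care is the distinctness of $uv$ and $uw$, and this is forced by their differing criticality.
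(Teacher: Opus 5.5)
Your proof is correct and follows the paper's argument: you get the parities of $x$ and $y$ from Observation~\ref{obs:odd-critical} (equivalently Proposition~\ref{Deguv}), and adjacency from the shared endpoint $u$. Your explicit check that $uv \neq uw$, forced by their differing criticality, is a small detail the paper leaves implicit.
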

\begin{proof}
    Since $uv\in E(G)$ is a critical edge, $x\in ov(L(G))$ by Observation~\ref{obs:odd-critical}. Similarly, since $uv\in E(G)$ is a non-critical edge, $y\in ev(L(G))$. Since $uv$ and $uw$ and incident on same vertex $u$, vertices $x$ and $y$ are adjacent in $L(G)$, and since one has degree odd and other even, $xy\in E(L(G))$ is a critical edge.
\end{proof}

\begin{lemma}\label{lemma:always-critical}
    Let a vertex $v\in V(G)$ have both a critical edge, say $vv_c\in E(G)$, and a non-critical edge, say $vv_{nc}\in E(G)$, incident on it. Suppose $v$ has an edge $vu\in E(G)$ incident on it other than $vv_c$ and $vv_{nc}$. Then the vertex $x \in V(L(G))$ which corresponds to $vu \in E(G)$ has some critical edge incident on it in $L(G)$.
\end{lemma}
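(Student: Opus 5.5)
Let $x_c$ and $x_{nc}$ be the vertices of $L(G)$ corresponding to $vv_c$ and $vv_{nc}$. Since $vu$, $vv_c$ and $vv_{nc}$ all contain $v$, the vertex $x$ is adjacent in $L(G)$ to both $x_c$ and $x_{nc}$. By Observation~\ref{obs:odd-critical}, $x_c\in ov(L(G))$ because $vv_c$ is critical, and $x_{nc}\in ev(L(G))$ because $vv_{nc}$ is non-critical. So $x$ has one odd-degree neighbour and one even-degree neighbour in $L(G)$.

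The argument now splits on the parity of $d_{L(G)}(x)$. If $x\in ev(L(G))$, the edge $xx_c$ joins an even-degree vertex to an odd-degree vertex, so it is critical. If $x\in ov(L(G))$, the edge $xx_{nc}$ joins an odd-degree vertex to an even-degree vertex, so it is critical. In either case $x$ has a critical edge incident on it, which proves the lemma.

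One could also reason directly in $G$. If $vu$ is non-critical, Observation~\ref{obs:critical-and-non-critical} applied to the critical edge $vv_c$ and the non-critical edge $vu$ makes $xx_c$ critical. If $vu$ is critical, the same observation applied to $vu$ and $vv_{nc}$ makes $xx_{nc}$ critical. I do not expect any step to be a real obstacle. The only point to check is that $x$, $x_c$ and $x_{nc}$ are three distinct vertices, which holds because the three edges are distinct by hypothesis.
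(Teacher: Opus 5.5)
Your proposal is correct and follows essentially the same argument as the paper: $x$ is adjacent to the odd-degree vertex corresponding to $vv_c$ and to the even-degree vertex corresponding to $vv_{nc}$, and whichever parity $x$ has, one of these two edges is critical. The paper cases on whether $vu$ is critical, which fixes the parity of $x$ via Observation~\ref{obs:odd-critical}; this is exactly your second variant, and your first variant simply cases on the parity of $d_{L(G)}(x)$ directly.
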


\begin{proof}
    Let the vertices corresponding to edges $vv_c$ and $vv_{nc}$ be $a \in V(L(G))$ and $b \in V(L(G))$ respectively. Now, $a\in ov(L(G))$ and $b\in ev(L(G))$ by Observation~\ref{obs:odd-critical}. Since $vv_c$ and $vu$ are incident of same vertex $v$, $xa\in E(L(G))$. Similarly, $xb\in E(L(G))$. If the edge $vu$ is a critical edge, then $x \in ov(L(G))$ (by Observation~\ref{obs:odd-critical}) implying that $xb$ is a critical edge in $L(G)$. Otherwise, if the edge $vu$ is a non-critical edge, then $x \in ev(L(G))$ (by Observation~\ref{obs:odd-critical}) implying that $xa$ is a critical edge in $L(G)$. 
\end{proof}

\begin{lemma}\label{lem:claw-critical}
    For a vertex $v\in V(G)$, let there exist three edges $vu_1$, $vu_2$ and $vu$ such that $vu_1$ and $vu_2$ are critical ($vu$ may or may not be critical). Suppose there exists an edge $e$ connecting any two of $u_1$, $u_2$ and $u_3$ (i.e. $v$, $u_1$, $u_2$ and $u$ don't induce a claw in $G$). Then the vertex $x\in V(L(G))$ corresponding to $e \in  E(G)$, has a critical edge incident on it.
\end{lemma}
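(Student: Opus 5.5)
The plan is a short case analysis on which pair of $\{u_1,u_2,u\}$ the edge $e$ joins (reading the ``$u_3$'' in the statement as $u$). In each case I will exhibit a neighbour of $x$ in $L(G)$ whose degree parity differs from that of $x$. The tool throughout is Observation~\ref{obs:odd-critical}: a vertex of $L(G)$ is odd exactly when its corresponding edge of $G$ is critical. Let $a,b,c\in V(L(G))$ correspond to $vu_1, vu_2, vu$ respectively. Since $vu_1, vu_2$ are critical, $a,b\in ov(L(G))$. Also, $v$ has parity opposite to both $u_1$ and $u_2$, so $u_1$ and $u_2$ have the same parity.

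\emph{Case $e=u_1u_2$.} Here $e$ shares endpoint $u_1$ with $vu_1$, so $xa\in E(L(G))$. Since $u_1$ and $u_2$ have equal parity, $e$ is non-critical. Hence $x\in ev(L(G))$, and $xa$ is a critical edge, since $a$ is odd.

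\emph{Case $e=u_1u$ (the case $e=u_2u$ is symmetric).} Again $xa\in E(L(G))$ via the common endpoint $u_1$, and $xc\in E(L(G))$ via the common endpoint $u$.
\begin{itemize}
\item If $e$ is non-critical, then $x$ is even and $xa$ is critical.
\item If $e$ is critical, then $u$ has parity opposite to $u_1$. Since $v$ is also opposite to $u_1$, the vertices $v$ and $u$ have the same parity. So $vu$ is non-critical and $c$ is even, while $x$ is odd; thus $xc$ is critical.
\end{itemize}

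The only step needing care is the parity bookkeeping in the critical subcase of the second case. There $a$ and $x$ are both odd, so one must switch to $c$ and use the parity transfer through $v$. Similarly, in the first case one must rule out $e$ being critical rather than search for an even neighbour. Everything else follows directly from Observation~\ref{obs:odd-critical}.
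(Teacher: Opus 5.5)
Your proof is correct and follows essentially the same case analysis as the paper. The case $e=u_1u_2$ is identical. For $e=u_1u$, the paper splits on whether $vu$ is critical and in both subcases pairs $x$ with the vertex corresponding to $vu$ (through the shared endpoint $u$). You instead split on whether $e$ is critical and use the vertex for $vu_1$ in one subcase, which is an inessential variation.
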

\begin{proof}
    If $e = u_1u_2$, then $e$ is a non-critical edge. Moreover $vu_1$ is a critical edge. So, by Observation~\ref{obs:critical-and-non-critical}, there will be a critical edge incident on $x \in L(G)$. 

    Otherwise, $e \in \{uu_1, uu_2\}$, without loss of generality, assume $e = uu_1$. If $vu$ is non-critical, then $e = uu_1$ will be critical, as parities of the degrees of $v$ and $u$ are different from the parities of degrees of $u_1$. Similarly, if $vu$ is critical, then $e = uu_1$ will be non-critical. In both case, $u$ has both a critical and a non-critical edge incident on it. So, by Observation~\ref{obs:critical-and-non-critical}, there will be a critical edge incident on $x \in L(G)$. 
\end{proof}
We are now ready to analyze the structural properties of higher order line graphs relevant to the existence of Euler paths in them.

\subsection{Structural results}\label{subsec:structureEuler}

We start by enumerating and analyzing the possible cases of graphs $G$, where $L(G)$ does not have an Euler path but $L^2(G)$ does. 

Since $L^2(G)$ has an Euler 
path, it has exactly two odd degree vertices, i.e. $|ov(L^2(G))|=2$. By Observation~\ref{obs:odd-critical}, $L(G)$ has exactly two critical edges. Moreover, since $L(G)$ does not have an Euler path, it must have more than two odd degree vertices. Thus, by handshaking Lemma~\cite{gunderson2010handbook}, it must contain at least four odd degree vertices, i.e. $|ov(L(G))|\geq 4$.

\begin{observation}\label{obs:4-ov}
    If $L(G)$ has no Euler path but $L^2(G)$ does, then $|ov(L^2(G))| = 2$ and $|ov(L(G))|\geq 4$.
\end{observation}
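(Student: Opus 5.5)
The plan is to prove the two claims of Observation~\ref{obs:4-ov} separately, using only Proposition~\ref{prop:two-odd}, the handshaking lemma, and the connectivity of line graphs.

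\textbf{Connectivity.} First I will check that $L(G)$ and $L^2(G)$ are connected. This is needed because Proposition~\ref{prop:two-odd} applies only to connected graphs. $G$ is connected by our standing convention. The line graph of a connected graph with at least one edge is connected: two edges of $G$ are joined by a path in $G$, and consecutive edges along that path share an endpoint. $L(G)$ has an edge, since otherwise $L^2(G)$ would be empty and could not have an Euler path. So both graphs are connected.

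\textbf{First claim.} Since $L^2(G)$ is connected and has an Euler path, Proposition~\ref{prop:two-odd} gives exactly two odd-degree vertices. Hence $|ov(L^2(G))| = 2$. (Through Observation~\ref{obs:odd-critical}, this also says $L(G)$ has exactly two critical edges, although that is not needed here.)

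\textbf{Second claim.} $L(G)$ is connected and has no Euler path, so by Proposition~\ref{prop:two-odd} the number of odd-degree vertices is not exactly two. By the handshaking lemma, $|ov(L(G))|$ is even, so it lies in $\{0,4,6,\dots\}$.

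\textbf{Excluding zero, the main obstacle.} It remains to rule out $|ov(L(G))| = 0$. Suppose every vertex of $L(G)$ has even degree. Then no edge of $L(G)$ is critical, since a critical edge joins an odd-degree vertex to an even-degree one. By Observation~\ref{obs:odd-critical}, every vertex of $L^2(G)$ then has even degree. This gives $|ov(L^2(G))| = 0$, contradicting the first claim. Therefore $|ov(L(G))| \geq 4$, which completes the argument.
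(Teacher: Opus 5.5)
Your proof is correct and follows essentially the same route as the paper: Proposition~\ref{prop:two-odd} gives $|ov(L^2(G))| = 2$, and the handshaking lemma then forces $|ov(L(G))| \geq 4$. Your explicit exclusion of $|ov(L(G))| = 0$ via Observation~\ref{obs:odd-critical} (if $L(G)$ has no critical edges, then $L^2(G)$ has no odd-degree vertices) is a genuine improvement. The paper simply asserts that $L(G)$ has ``more than two'' odd-degree vertices, which overlooks the case where $L(G)$ has an Euler circuit; such a circuit is not counted as an Euler path here.
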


Since $|ov(L^2(G))|=2$, by Observation~\ref{obs:odd-critical}, there must be exactly two critical edges in $L(G)$. These two critical edges can have the following cases (Figure~\ref{fig:cases-of-L(G)}).

\begin{itemize}
    \item Both critical edges of $L(G)$ share an odd degree vertex as an endpoint (Lemma~\ref{lem:share-odd}).
    \item Both critical edges of $L(G)$ share an even degree vertex as an endpoint (Lemma~\ref{lem:share-even}).
    \item The critical edges of $L(G)$ do not have a common endpoint (Lemma~\ref{lem:matching-edges}).
\end{itemize}

\begin{figure}[ht!]
    \centering
    \includegraphics[width=\textwidth]{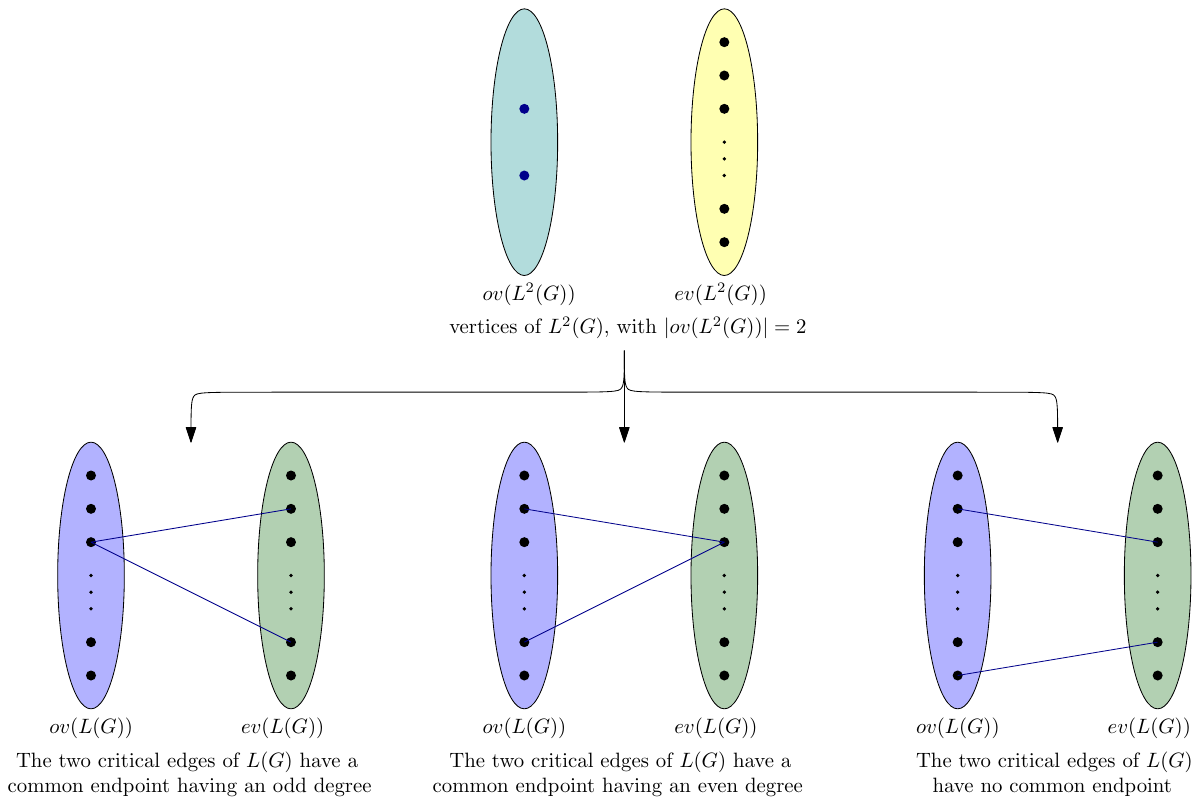}
    \caption{Cases of the two critical edges in $L(G)$.}\label{fig:cases-of-L(G)}
\end{figure}

In the next three results, we analyze each of these cases of the critical edges of $L(G)$ and deduce properties of $G$. We provide a proof sketch for Lemma~\ref{lem:share-odd}. Since the proofs of the other two lemmata are similar, we move it to the Appendix.

\begin{lemma}\label{lem:share-odd}
    Let $G$ be a graph such that $L(G)$ does not have an Euler path, but $L^2(G)$ does.  If both critical edges of $L(G)$ share an odd degree vertex as an endpoint then there is no graph $H$ such that $L(H) = G$.
\end{lemma}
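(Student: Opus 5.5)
We argue by contradiction. Suppose $G = L(H)$ for some graph $H$. Then $G$ is claw-free, and this is the only property of $H$ we will use. The goal is to show that claw-freeness is incompatible with the critical-edge pattern in $L(G)$.

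\textbf{Step 1: the configuration in $L(G)$ and in $G$.} Let the two critical edges of $L(G)$ be $xa$ and $xb$, with $x \in ov(L(G))$ and $a,b \in ev(L(G))$. By Observation~\ref{obs:odd-critical}, $e_x = uv$ is a critical edge of $G$; say $u \in ov(G)$ and $v \in ev(G)$. The edges $e_a, e_b$ are non-critical edges of $G$ sharing an endpoint with $e_x$.

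Conversely, by Observation~\ref{obs:critical-and-non-critical}, every pair consisting of a critical and a non-critical edge of $G$ at a common vertex yields a critical edge of $L(G)$. Hence:
\begin{enumerate}[label=(\roman*)]
\item $e_a$ and $e_b$ are the only non-critical edges of $G$ adjacent to any critical edge of $G$;
\item every critical edge of $G$ other than $e_x$ meets only critical edges at both of its endpoints, with the possible exception of meeting $e_a$ or $e_b$, and that is only allowed if it equals $e_x$.
\end{enumerate}
By Observation~\ref{obs:4-ov}, $|ov(L(G))| \ge 4$, so $G$ has at least three critical edges besides $e_x$.

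\textbf{Step 2: localize the mixed vertices.} Call a vertex \emph{mixed} if it is incident to both a critical and a non-critical edge. By (i), every mixed vertex lies in $\{u,v\}$ and is an endpoint of $e_a$ or $e_b$.

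Apply Lemma~\ref{lemma:always-critical} at a mixed vertex $w \in \{u,v\}$. Any edge at $w$ other than one critical edge and one non-critical edge would produce a vertex of $L(G)$ outside $\{x,a,b\}$ carrying a critical edge, which is impossible. So each mixed vertex has degree at most $3$, and its incident edges are essentially $e_x$ together with $e_a$ and/or $e_b$; the exceptions are the cases where the extra edge is itself $e_x$, $e_a$ or $e_b$.

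We then split into cases by where $e_a, e_b$ attach:
\begin{enumerate}[label=(\alph*)]
\item both at $u$;
\item both at $v$;
\item one at $u$ and one at $v$.
\end{enumerate}
In each case we pin down the degrees of $u$ and $v$ and their incident edges.

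\textbf{Step 3: use the other critical edges and claw-freeness.} Consider the set $C$ of critical edges of $G$ other than $e_x$; we know $|C| \ge 3$. Since $G$ is connected and only $u,v$ are mixed, the edges of $C$ together with $e_x$ form a connected region hanging off $u$ or $v$ through critical edges. In this region every vertex meets only critical edges, so its neighbours all have the opposite parity.

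Take a vertex $w$ of this region of degree at least $3$ with two critical edges $wu_1, wu_2$ and a third edge $wu'$. Because $G$ is claw-free, some edge joins two of $u_1,u_2,u'$. Lemma~\ref{lem:claw-critical} then gives a vertex of $L(G)$ corresponding to that edge which carries a critical edge. That vertex must lie in $\{x,a,b\}$, so the edge among the neighbours is $e_x$, $e_a$ or $e_b$. This forces $w \in N_G(\{u,v\})$ and ties $w$ to the already bounded-degree vertices $u, v$.

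Counting with the parities of $u$ (odd) and $v$ (even), and using that the degrees of $u$ and $v$ are at most $3$, we aim to show that no such $w$ can exist with $|C| \ge 3$. If no such $w$ exists, the region has maximum degree at most $2$, i.e.\ it is a path or cycle hanging off $u$ or $v$. But then its interior vertices have even degree while their neighbours must have the opposite parity, which is impossible. This contradiction shows $G$ has no preimage.

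\textbf{Main obstacle.} The hardest part is the case analysis in Steps 2–3, in particular ruling out the degenerate situations where the third edge in Lemma~\ref{lemma:always-critical} or the neighbour edge in Lemma~\ref{lem:claw-critical} coincides with $e_x$, $e_a$ or $e_b$ (for instance, triangles through $u$ or $v$). I would first handle the case where $e_a$ and $e_b$ both meet $e_x$ at the odd endpoint $u$, since the parity constraints are tightest there. I would then reuse that argument symmetrically for the remaining cases.
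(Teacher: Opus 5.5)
\textbf{Verdict: the strategy works but Step~3 is incomplete.} Your framework is essentially the paper's proof, repackaged. Your cases (b), (c), (a) are its Cases I, II, III. Lemma~\ref{lemma:always-critical} disposes of the mixed vertices, and claw-freeness enters via Lemma~\ref{lem:claw-critical}. Your connectivity of the critical edges plays the role of the paper's nearest-critical-vertex argument. Your rule that any vertex of $L(G)$ carrying a critical edge lies in $\{x,a,b\}$ is a clean way to handle the coincidences you worry about.

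The gap is that the decisive part of Step~3 is only announced, and the one justification you give for it is false. First, (b) and (c) die immediately, and you should say so. A mixed vertex has all its edges in $\{e_x,e_a,e_b\}$, so in (b) the even vertex $v$ would have degree $3$, and in (c) the odd vertex $u$ would have degree $2$. In the only surviving case (a), however, $v$ is \emph{not} mixed, so Step~2 gives no bound on $d_G(v)$. Your premise that $u$ and $v$ both have degree at most $3$ is therefore unsupported. Without it, the conclusion $w\in N_G(\{u,v\})$ is too weak, since $N_G(v)$ might a priori be large.

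\textbf{The missing observation.} In case (a), all three special edges pass through $u$. Write $e_a=uu_a$ and $e_b=uu_b$; then $N_G(u)=\{v,u_a,u_b\}$, and $u_a,u_b$ carry no critical edge (otherwise they would be mixed). Suppose $w\neq u$ carries a critical edge and $d_G(w)\ge 3$. Then all edges at $w$ are critical. For \emph{any} three neighbours of $w$, the edge supplied by claw-freeness lies in $\{e_x,e_a,e_b\}$ and hence contains $u$. So $u$ is one of the chosen neighbours, and since $u_a,u_b$ carry no critical edge, this forces $w=v$. But $v$ is even, so $d_G(v)\ge 3$ gives $d_G(v)\ge 4$, and choosing three neighbours of $v$ other than $u$ yields a contradiction.

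\textbf{Finishing the count.} Hence $N_G(v)=\{u,w_1\}$ with $w_1$ odd. By the same argument $d_G(w_1)\le 2$, so $d_G(w_1)=1$. By your connectivity argument, the only critical edges of $G$ are then $uv$ and $vw_1$, contradicting $|ov(L(G))|\ge 4$. These three alternatives (a claw at $v$, a claw at $w_1$, too few critical edges) are exactly the paper's subcases (a)--(c) of Case III.

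Your closing sentence should be replaced by this count. A region of maximum degree $2$ can have an interior vertex of degree $2$, namely $v$. What parity forbids is two \emph{adjacent} interior vertices, and it is $|C|\ge 3$ that would force them.
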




\begin{proof}
    Let the two critical edges $L(G)$ be $ax$ and $ay$ where $a\in ov(L(G))$ and $x,y\in ev(L(G))$ (Figure~\ref{fig:common-odd}). Since $ax \in E(L(G))$, the edges $e_a \in E(G)$ and $e_x \in E(G)$ of $G$ must have a common endpoint. Similarly, $e_a \in E(G)$ and $e_y \in E(G)$ have a common endpoint. Moreover, as $a\in ov(L(G))$ and $x,y\in ev(L(G))$, $e_a$ is a critical edge in $G$ while $e_x$ and $e_y$ are non-critical edges. This gives us the following cases.
    
    \begin{figure}[ht!]
        \centering
        \includegraphics[width=\textwidth]{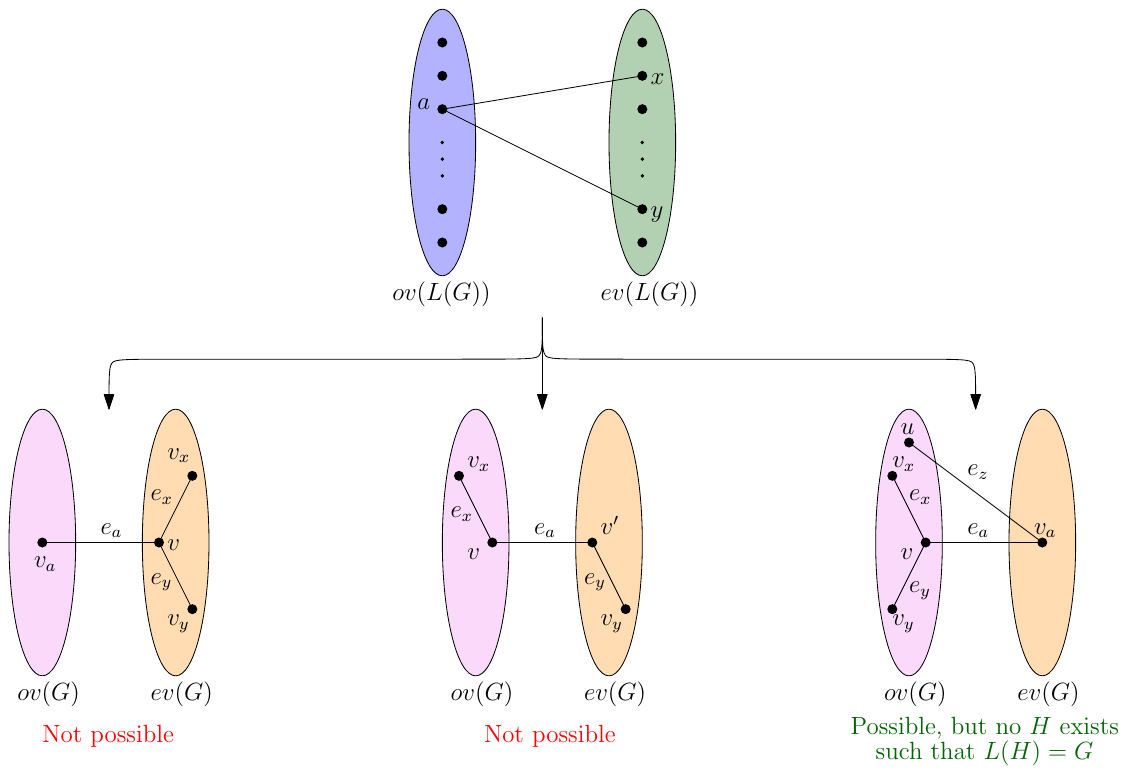}
        \caption{Cases of $G$ when both critical edges of $L(G)$ share an odd degree vertex as an endpoint.}\label{fig:common-odd}
    \end{figure}

    \subparagraph*{Case I: $e_a, e_x$ and $e_y$ are incident on a common vertex $v\in ev(G)$, having an even degree.} Let the other endpoints of $e_a$, $e_x$ and $e_y$ be $v_a$, $v_x$, and $v_y$ respectively. Recall that $e_a$ is critical in $G$ while $e_x$ and $e_y$ are not; and since $v \in ev(G)$ we must have $v_x$, $v_y \in ev(G)$ and $v_a \in ov(G)$. $v$ must have at least one neighbour other than $v_a, v_x$ and $v_y$, as $v$ has an even degree. Let $u \in N_G(v) \setminus \{v_a, v_x, v_y\}$ be such a neighbour. Let $z \in V(L(G))$ be the vertex in $L(G)$ corresponding to the edge $uv \in E(G)$. Notice that the vertex $v$ has a critical edge $e_a$ and a non-critical edge $e_x$ incident on it. So, by Lemma~\ref{lemma:always-critical}, $z$ always has a critical edge incident on it in $L(G)$ --- a contradiction as $L(G)$ has exactly two critical edges $ax$ and $ay$. So, this case is not possible.
    
    \subparagraph*{Case II: $e_a$ and $e_x$ are incident on a common vertex $v\in ov(G)$ having an odd degree; and $e_a$ and $e_y$ are incident on a common vertex $v' \in ev(G)$ having an even degree.} Let the other endpoints of $e_x$ and $e_y$ be $v_x$ and $v_y$ respectively. Note that $v_x \in ov(G)$ and $v_y \in ev(G)$ as $e_x$ and $e_y$ are non-critical edges of $G$. $v$ must have at least one neighbour other than $v'$ and $v_x$, as $v$ has an odd degree. Let $u \in N_G(v) \setminus \{v', v_x\}$ be such a neighbour. Let $z \in V(L(G))$ be the vertex in $L(G)$ corresponding to the edge $uv \in E(G)$. Notice that the vertex $v$ has a critical edge $e_a$ and a non-critical edge $e_x$ incident on it. So, by Lemma~\ref{lemma:always-critical}, $z$ always has a critical edge incident on it in $L(G)$ --- a contradiction as $L(G)$ has exactly two critical edges $ax$ and $ay$. So, this case is not possible.
    
    \subparagraph*{Case III: $e_a, e_x$ and $e_y$ are incident on a common vertex $v\in ov(G)$, having an odd degree.} We show that this case is possible, but there exists no graph $H$ such that $G=L(H)$. Let the endpoints of $e_a$, $e_x$ and $e_y$ other than $v$ be $v_a$, $v_x$ and $v_y$ respectively. Then, $v_a \in ev(G)$ while $v_x,v_y \in ov(G)$ as $e_a$ is a critical edge and $e_x$ and $e_y$ are non-critical edges with $v\in ov(G)$. Now, $v_a$ must have a neighbour other than $v$ as $v_a$ has an even degree. Let $u \in N_{G}(v_a) \setminus\{v\}$ be such a neighbour. Let $z \in V(L(G))$ be the vertex in $L(G)$ corresponding to the edge $uv_a \in E(G)$.
    
    If $u \in ev(G)$, then $uv$ is a non-critical edge. So $az$ is a critical edge in $L(G)$ by Observation~\ref{obs:critical-and-non-critical} --- a contradiction as $L(G)$ has exactly two critical edges $ax$ and $ay$. Thus, we must have $u \in ov(G)$, implying $N_G(v_a) \subseteq ov(G)$. If $u$ has any neighbour other than $v_a$, say $u'$, then we must have $u'\in ev(G)$. This is because if $u'\in ov(G)$, then $uu'\in E(G)$ is a non-critical edge  $uv_a\in E(G)$ is a critical edge, both incident on $u$. So, by Observation~\ref{obs:critical-and-non-critical}, there will yet another critical edge in $L(G)$ other than $ax$ and $ay$ --- a contradiction. Therefore $N_G(u) \subseteq ev(G)$. Note that in $G$, both $e_a$ and $uv_a$ are critical edges. 
    
    $L(G)$ has at least four odd degree vertices since $L(G)$ does not have an Euler path (By Observation~\ref{obs:4-ov}). So, $G$ must have at least $4$ critical edges, i.e., there exists some critical edge $e'$ other than $e_a$ and $uv_a$. We now show that there exists an induced claw in $G$. We look into the following cases.
    
    \textit{Case (a): $d_G(v_a) > 2$.} There exists an edge $wv_a$ other than $e_a$ and $v_au$ be incident on $v_a$. Then, $w\in N_G(v_a) \subseteq ov(G)$. The vertices $v_a$, $v$, $u$ and $w$ induce a claw in $G$ as otherwise there will be yet another critical edge in $L(G)$ distinct from $ax$ and $ay$, by Lemma~\ref{lem:claw-critical} ---  a contradiction.

    \textit{Case (b): $d_G(u) > 1$.} There exists an edge $wu$ be incident on $u$ other than $uv_a$. Since $u$ has an odd degree, $u$ has at least one more neighbour other than $w$ and $v_a$. Let $u' \in N_G(u) \setminus \{w, v_a\}$ be such a neighbour. Then, $u',w\in N_G(u) \subseteq ev(G)$. Also, the vertices $u$, $v_a$, $w$ and $u'$ induce a claw in $G$ as otherwise there will be yet another critical edge in $L(G)$ other than $ax$ and $ay$ by Lemma~\ref{lem:claw-critical} --- a contradiction.

    \textit{Case (c): $d_G(v_a) = 2$ and $d_G(u) = 1$.} Consider a path from $v_a$ to either endpoint of $e'$; such a path must contain $e_a$. Therefore, the critical edge $e'$ must be incident on a vertex $v'$ in the connected component $C$ of $G\setminus\{e_a\}$ which includes $v$. Further without loss of generality, assume $v'\in V(C)$ is the nearest vertex to $v$ in $C$ with a critical edge incident on it. Let $e' = v'w$. Now, since $v'$ is connected to $v$ in $C$ and no other vertex in the shortest path from $v$ to $v'$ in $C$ has a critical edge incident on it (due to the choice of $v'$), there exists a non-critical edge $w'v'$, appearing in the shortest path from $v$ to $v'$ in $C$ (or the non critical edge $e_x$ if $v' = v$). So, $v'$ has a critical edge and a non-critical edge incident on it. So, by Observation~\ref{obs:critical-and-non-critical}, there will be yet another critical edge in $L(G)$ other than $ax$ and $ay$ --- a contradiction. So, this case is not possible.
    
    The cases I, II and III are exhaustive. So, $G$ must contain an induced claw. Hence, there is no graph $H$ such that $G=L(H)$~\cite{beineke1970characterization}, i.e. $G$ is not a line graph of any graph.
\end{proof}

We define the graph $EG_0$ as shown in Figure~\ref{fig:EG_0}. 

\begin{figure}[ht!]
    \centering
    \includegraphics[width=0.2\textwidth]{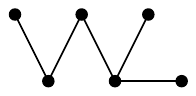}
    \caption{The graph $EG_0$}\label{fig:EG_0}
\end{figure}

\begin{lemma}\label{lem:share-even}
    Let $G$ be a graph such that $L(G)$ does not have an Euler path, but $L^2(G)$ does. If both critical edges of $L(G)$ share an even degree vertex as an endpoint, then there is no graph $H$, other than $EG_0$, such that $L(H) = G$.
\end{lemma}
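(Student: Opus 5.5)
We mirror the proof of Lemma~\ref{lem:share-odd}. Let the two critical edges of $L(G)$ be $ax$ and $ay$ with $a\in ev(L(G))$ and $x,y\in ov(L(G))$. By Observation~\ref{obs:odd-critical}, $e_a$ is a non-critical edge of $G$ while $e_x,e_y$ are critical, and $e_a$ shares an endpoint with each of $e_x$ and $e_y$. The configurations are then split exactly as before.
\begin{itemize}
\item[(I)] $e_a,e_x,e_y$ meet at a common vertex $v\in ov(G)$.
\item[(II)] $e_a,e_x,e_y$ meet at a common vertex $v\in ev(G)$.
\item[(III)] $e_x$ and $e_y$ meet $e_a$ at its two different endpoints.
\end{itemize}
In each configuration we use the parity of $d_G(v)$ to force $v$ (or the relevant endpoint) to have an additional incident edge. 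We then apply Lemma~\ref{lemma:always-critical} to that edge, Observation~\ref{obs:critical-and-non-critical} to critical/non-critical pairs at a vertex, and Lemma~\ref{lem:claw-critical} to potential triangles. Any additional critical edge produced this way contradicts the fact that $L(G)$ has exactly two critical edges.

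\emph{Common endpoint.} In case (I), $d_G(v)$ is odd, so $v$ may have degree exactly $3$. In case (II), $v$ has even degree, so it has a fourth neighbour $u$. Lemma~\ref{lemma:always-critical} applied to $uv$ (since $v$ carries the critical edge $e_x$ and the non-critical edge $e_a$) yields a critical edge at the vertex of $L(G)$ corresponding to $uv$, which is a contradiction. So case (II) is impossible and we may take $d_G(v)=3$ in case (I). We then examine the far endpoints $v_a,v_x,v_y$. Since $e_x,e_y$ are critical and $v$ is odd, $v_x,v_y\in ev(G)$, hence each has another neighbour. Using the same propagation as in Case III of Lemma~\ref{lem:share-odd}, we show the neighbourhoods alternate in parity. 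Because $|ov(L(G))|\ge 4$ by Observation~\ref{obs:4-ov}, there must be a further critical edge $e'$. Either some vertex of degree $\ge3$ with two critical edges induces a claw, by Lemma~\ref{lem:claw-critical}, or, if all such vertices have small degree, the shortest-path argument of Case (c) produces a vertex with both a critical and a non-critical edge, which is a contradiction.

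\emph{Distinct endpoints.} In case (III), write $e_a=pq$ with $e_x$ at $p$ and $e_y$ at $q$. Since $e_a$ is non-critical, $p$ and $q$ have the same parity. If that parity is even, each of $p,q$ has another incident edge, and Lemma~\ref{lemma:always-critical} gives a contradiction. So both are odd. Either some endpoint has degree $\ge 3$, which gives a claw via Lemma~\ref{lem:claw-critical} or a contradiction, or $d(p)=d(q)=1$, which is impossible since $p$ and $q$ have other edges. In fact, odd degree with two edges forces degree $\ge3$, and the third edge at $p$ is handled by Lemma~\ref{lemma:always-critical} again.

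\emph{Main obstacle.} The hardest part is the residual configuration in which no contradiction arises and no claw is forced. We expect this to pin $G$ down to one specific small graph, namely $L(EG_0)$. Here we would carefully track all neighbours until the graph is closed (every vertex's degree is accounted for) and the critical-edge count, i.e.\ $|ov(L(G))|\ge4$ with exactly two critical edges in $L(G)$, is met. We would then verify directly that the unique claw-free such $G$ is $L(EG_0)$, using Whitney's uniqueness of preimages to conclude $H=EG_0$. Outside this exceptional case, $G$ contains an induced claw and so is not a line graph by~\cite{beineke1970characterization}.
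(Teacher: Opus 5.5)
\textbf{Gap in configuration (III).} The error is in the distinct-endpoints configuration. You write $e_a=pq$ with $e_x$ at $p$ and $e_y$ at $q$, assert that if $p,q\in ev(G)$ then each has a further incident edge, and conclude that both are odd. That assertion is false. The vertex $p$ already carries $e_a$ and $e_x$, so $d_G(p)=2$ is an even degree that needs no third edge, and the same holds for $q$. Lemma~\ref{lemma:always-critical} only forces $d_G(p)=d_G(q)=2$ exactly in the even case. It is the \emph{odd} subcase that is impossible, as your own later remark shows: odd degree with two edges forces a third edge, which Lemma~\ref{lemma:always-critical} excludes.

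As written, your case split therefore eliminates every claw-free configuration and would prove that $G$ is never a line graph. This is contradicted by $G=L(EG_0)$, which satisfies the hypotheses (cf.\ Observation~\ref{obs:eg0}). That graph is a triangle $pqw$ with a pendant path $w\,u\,u'$, i.e.\ exactly this even subcase with $d_G(p)=d_G(q)=2$. Your ``main obstacle'' paragraph anticipates such a residual configuration, but nothing in your case analysis produces it, and you give no argument for it.

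\textbf{What is missing.} You need to analyse this subcase, which is the paper's Cases IV and V. Let $v_x,v_y$ be the far endpoints of $e_x,e_y$. Both are odd, and since each carries a critical edge, all their neighbours are even by Observation~\ref{obs:critical-and-non-critical}.
\begin{itemize}
\item If $v_x\neq v_y$: $G$ is not just the path $v_x\,p\,q\,v_y$, since that would give $L(G)=P_3$, which has an Euler path. So one of them, say $v_y$, has three even neighbours, and Lemma~\ref{lem:claw-critical} gives an induced claw at $v_y$.
\item If $v_x=v_y=w$: $w$ has a third neighbour $u$, which must be even. Then $u$ has a further neighbour $u'$, which must be odd, and $\{p,q,w,u,u'\}$ spans $L(EG_0)$. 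Any additional edge cannot touch $p$ or $q$. It therefore produces either a claw at $w$, $u$ or $u'$ via Lemma~\ref{lem:claw-critical}, or a critical/non-critical pair at a vertex. Hence $G=L(EG_0)$, and $H=EG_0$ by uniqueness of preimages.
\end{itemize}

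\textbf{A smaller point.} In your common-endpoint odd case, the propagation and shortest-path detour is unnecessary. Once Lemma~\ref{lemma:always-critical} forces $d_G(v)=3$, Lemma~\ref{lem:claw-critical} applied at $v$ immediately makes $\{v,v_a,v_x,v_y\}$ an induced claw.
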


\begin{proof} 
    Let the two critical edges be $ax$ and $ay$ where $a\in ev(L(G))$ and $x,y\in ov(L(G))$ (Figure~\ref{fig:share-even}). Observe that $e_a$ and $e_x$ will have a common endpoint in $G$, and so will $e_a$ and $e_y$. Moreover, $e_x$ and $e_y$ are critical edges whereas $e_a$ is a non-critical edge. The following cases arise:
    
    \begin{figure}[ht!]
        \centering
        \includegraphics[width=\textwidth]{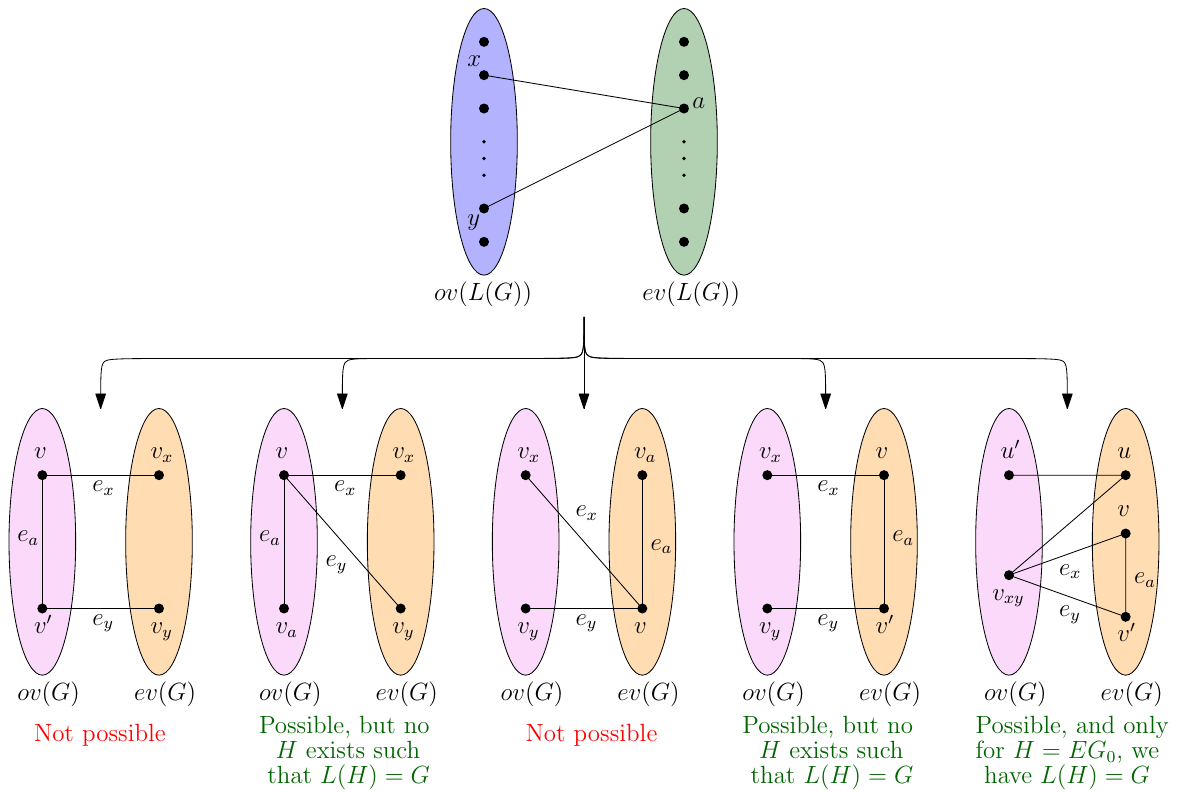}
        \caption{Cases of $G$ when both critical edges of $L(G)$ share an even degree vertex as an endpoint.}\label{fig:share-even}
    \end{figure}

    \subparagraph*{Case I: Both endpoints of $e_a$, say $v$ and $v'$, have an odd degree and $e_x$ is incident on $v$ and $e_y$ is incident on $v'$.} Let the other endpoints of $e_x$ and $e_y$ be $v_x$, $v_y$ respectively. Then, $v_x$, $v_y \in ev(G)$ as $e_x$ and $e_y$ are critical edges with $v$, $v'\in ov(G)$. Now, $v$ has at least one neighbour other then $v'$ and $v_x$, as $v$ has an odd degree. Let $u\in N_G(v)\setminus\{v',v_x\}$ be such a neighbour. Let $z \in V(L(G))$ be the vertex in $L(G)$ corresponding to the edge $uv \in E(G)$. Now, the vertex $v$ has a critical edge $e_x$ and a non-critical edge $e_a$ incident on it. So, by Lemma~\ref{lemma:always-critical}, $z$ has a critical edge incident on it in $L(G)$ --- a contradiction as $L(G)$ only has two critical edges $ax$ and $ay$. So, this case is not possible.
    
    \subparagraph*{Case II: Both endpoints of $e_a$ have an odd degree, and all three edges $e_a$, $e_x$ and $e_y$ are incident on a common vertex, say $v\in ov(G)$.} We show that this case is possible, but there exists no graph $H$ such that $G=L(H)$. Let the other endpoints of $e_x,e_a$ and $e_y$ be $v_x$, $v_a$ and $v_y$ respectively. Then, $v_x$, $v_y \in ev(G)$ as $e_x$ and $e_y$ are critical edges with $v\in ov(G)$. The vertices $v$, $v_x$, $v_a$ and $v_y$ induce a claw in $G$ as otherwise there will be yet another critical edge in $L(G)$ distinct from $ax$ and $ay$ by Lemma~\ref{lem:claw-critical} --- a contradiction. Therefore,  there is no graph $H$ such that $G=L(H)$~\cite{beineke1970characterization}, i.e. $G$ is not a line graph of any graph.
    
    \subparagraph*{Case III: $e_a$ has both endpoints of even degree, and all three edges $e_x$, $e_a$ and $e_v$ are incident on a common vertex, say $v\in ev(G)$.} Let the other endpoints of $e_a$, $e_x$ and $e_y$ be $v_a$, $v_x$ and $v_y$ respectively. Then $v_a\in ev(G)$ and $v_x$, $v_y \in ov(G)$ as $e_a$ is a non-critical edge and $e_x$ and $e_y$ are critical edges with $v\in ev(G)$. Now, $v$ must have a neighbour other than $v_a$, $v_x$ and $v_y$ as it has an even degree. Let $u\in N_G(v)\setminus\{v_a,v_x,v_y\}$ be such a neighbour. Let $z \in V(L(G))$ be the vertex in $L(G)$ corresponding to the edge $uv \in E(G)$. Now, vertex $v$ has a critical edge $e_x$ and a non-critical edge $e_a$ incident on it. So, by Lemma~\ref{lemma:always-critical}, $z$ always has a critical edge incident on it in $L(G)$ --- a contradiction as $L(G)$ only has two critical edges $ax$ and $ay$. So, this case is not possible.

    \subparagraph*{Case IV: $e_a$ has both endpoints, say $v$ and $v'$, of even degree, and $e_x$ is incident on $v$ with other endpoint as $v_x$, and $e_y$ is incident on $v'$ with other endpoint as $v_y$, satisfying $v_x \ne v_y$.} We show that this case is possible, but there exists no graph $H$ such that $G=L(H)$. $v_x$, $v_y\in ov(G)$ as $e_x$ and $e_y$ are critical with $v, v'\in ev(G)$. Since $L(G)$ has at least four odd degree vertices (Observation~\ref{obs:4-ov}), $G$ must have a critical edge other than $e_x$ and $e_y$. The vertices $v$ and $v'$ already have a non-critical edge $e_a$ incident on them and critical edge $e_x$ and $e_y$ respectively incident on them. If there exists an edge incident on any of $v$ or $v'$ other than $e_a$, $e_x$ and $e_y$, then by Lemma~\ref{lemma:always-critical}, there will yet another critical edge in $L(G)$ other than $ax$ and $ay$ --- a contradiction. Therefore, $N_G(v) = \{v_x, v'\}$ and $N_G(v') = \{v, v_y\}$.
    
    Now, $v_x$ (resp. $v_y$) does not have any neighbour $w\in ov(G)$ as it already has a critical edge $e_x$ (resp. $e_y$) incident on it, and the edge $wv_x$ (resp. $wv_y)$ will be a non-critical; this will result in an yet another critical edge in $L(G)$ distinct from $ax$ and $ay$ by Lemma~\ref{obs:critical-and-non-critical} --- a contradiction. So, all neighbours of $v_x$ or $v_y$ are in $ev(G)$. Since $N_G(v) \cup N_G(v') = \{v_x, v_y, v, v\}$, there must be at least one more neighbour of $v_x$ or $v_y$ other than $v$ and $v'$ respectively as the graph is connected and is not isomorphic to a path.
    
    Without loss of generality, let $v_y$ have a neighbour other than $v'$. Now, the degree of $v_y$ is at least $3$ as it has an odd degree. Also, $N_G(v_y)\subseteq ov(G)$ as discussed. Thus, $v_y$ has at least two neighbours $v_1, v_2\in ev(G)$ other than $v'$. The vertices $v_x$, $v'$, $v_1$ and $v_2$ induce a claw in $G$ as otherwise there will be yet another critical edge in $L(G)$ distinct from $ax$ and $ay$ by Lemma~\ref{lem:claw-critical} --- a contradiction. Therefore, there is no graph $H$ such that $G=L(H)$~\cite{beineke1970characterization}, i.e. $G$ is not a line graph of any graph.
    
    \subparagraph*{Case V: $e_a$ has both endpoints, say $v$ and $v'$, of even degree, and $e_x$ is incident on $v$ with the other endpoint as $v_{xy}$, and $e_y$ is incident on $v'$ with the other endpoint as $v_{xy}$} Then $v_{xy} \in ov(G)$ as $e_x$ is critical with $v\in ev(G)$. Now, $v_{xy}$ must have at least one neighbour other than the $v$ and $v'$, as it has an odd degree. Let $u \in N_G(v_{xy}) \setminus \{v, v'\}$ be such a neighbour. Now, if $u\in ov(G)$, then $uv_{xy}$ is a non-critical edge and $v_{xy}v$ is a critical edge, both incident on $v$. So, by Observation~\ref{obs:critical-and-non-critical} there will be yet another critical edge in $L(G)$ other than $ax$ and $ay$ --- a contradiction. So, $u\in ev(G)$. Since $u$ has even degree, it must have at least one neighbour other than $v_{xy}$. Let $u' \in N_G(u)\setminus \{v_{xy}\}$ be such a neighbour. If $u'\in ev(G)$, then $u$ has both a non-critical edge $uv_{xy}$ and a critical edge $uu'$ incident on it. So, by Observation~\ref{obs:critical-and-non-critical} there will be yet another critical edge in $L(G)$ other than $ax$ and $ay$ --- a contradiction. So, $u'\in ov(G)$.
    
    Assume that $G \ne L(EG_0)$. Therefore, $G$ must have at least one edge outside than $M = \{uu',uv_{xy},e_x,e_y,e_a \}$, as these edges induce $L(EG_0)$. 
    
    Note that $v$ (resp. $v'$) does not have a neighbour other than $v_{xy}$ and $v'$ (resp. $v$). Otherwise, since there exists a critical and non-critical edge incident on $v$ (resp. $v'$), by Lemma~\ref{lemma:always-critical}, this will lead to yet another critical edge in $L(G)$ other than $ax$ and $ay$ --- a contradiction. Thus, there must exist an edge in $E(G) \setminus M$ which is incident on one of $v_{xy}$, $u$ or $u'$ as the graph is connected. In either case, there will be a claw induced with $v_{xy}$, $u$ or $u'$ as the degree $3$ vertex respectively or there will be yet another critical edge in $L(G)$ other than $ax$ and $ay$ by Lemma~\ref{lem:claw-critical} which would be a contradiction. Therefore, there is no graph $H$ such that $G=L(H)$~\cite{beineke1970characterization}, i.e. $G$ is not a line graph of any graph.

    The cases I, II, III, IV, and V are exhaustive. So, $G$ always has an induced claw except when $G=L(EG_0)$. Hence, except when $G = L(EG_0)$ i.e. $H=EG_0$, there is no graph $H$ such that $G=L(H)$.
\end{proof}

\begin{lemma}\label{lem:matching-edges}
    Let $G$ be a graph such that $L(G)$ does not have an Euler path, but $L^2(G)$ does. If both critical edges of $L(G)$ share no vertex as an endpoint, then there is no graph $H$ such that $L(H) = G$.
\end{lemma}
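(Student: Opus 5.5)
We follow the template of Lemmas~\ref{lem:share-odd} and~\ref{lem:share-even}. Let the two critical edges of $L(G)$ be $ax$ and $by$ with $\{a,x\}\cap\{b,y\}=\emptyset$, where $a,b\in ov(L(G))$ and $x,y\in ev(L(G))$. By Observation~\ref{obs:odd-critical}, $e_a,e_b$ are critical edges of $G$, $e_x,e_y$ are non-critical, $e_a$ shares an endpoint $v$ with $e_x$, and $e_b$ shares an endpoint $w$ with $e_y$. By Observation~\ref{obs:4-ov}, $G$ has at least four critical edges, so there is some critical edge $e'\notin\{e_a,e_b\}$. Throughout we use the principle, applied in the earlier lemmas, that any vertex of $G$ carrying both a critical and a non-critical edge yields a critical edge of $L(G)$ through Observation~\ref{obs:critical-and-non-critical}. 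All such critical edges of $L(G)$ must lie in $\{ax,by\}$.

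\textbf{Local analysis at $v$.} At $v$ there is the critical edge $e_a$ and the non-critical edge $e_x$. If $v$ has any further edge $vu$, Lemma~\ref{lemma:always-critical} gives a critical edge of $L(G)$ at the vertex for $vu$, which is distinct from $a,x,b,y$ unless $vu\in\{e_b,e_y\}$. So either $d_G(v)=2$, or the extra edges at $v$ are exactly among $e_b,e_y$, in which case $v=w$ or $v$ is an endpoint of $e_b$ or $e_y$.

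\textbf{Case split on parity.} If $v\in ev(G)$, then $d_G(v)\ge 2$ is even. The possibilities are $d_G(v)=2$, or $d_G(v)=4$ with $v$ incident to $e_a,e_x,e_b,e_y$. In the latter case, the vertex $x$ is adjacent in $L(G)$ to $b$, and the edge $xb$ is critical (one endpoint even, one odd), contradicting the assumption that the two critical edges share no vertex. So $d_G(v)=2$ whenever $v$ is even. If $v\in ov(G)$, then $d_G(v)\ge 3$ is odd. A third edge at $v$ must be $e_b$ or $e_y$, and this again makes $a$ or $x$ adjacent in $L(G)$ to $b$ or $y$ with a parity mismatch, producing a critical edge that touches both pairs: $a$--$y$ or $x$--$b$ would be critical. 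This rules out the odd case entirely. Hence $v\in ev(G)$, $N_G(v)=\{v_a,v_x\}$ with $v_a\in ov(G)$ and $v_x\in ev(G)$, and symmetrically for $w$.

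\textbf{Producing a claw.} Next we look at the other endpoint $v_a$ of $e_a$. It is odd and has the critical edge $e_a$, so all its other neighbours must be even; otherwise Observation~\ref{obs:critical-and-non-critical} produces a new critical edge at $a$, and the only allowed partner $x$ is already accounted for. Similarly the neighbours of $v_x$ other than $v$ must be even, or, when they are odd, the resulting critical edges must coincide with $e_a$-type edges. Now we use the extra critical edge $e'$ together with connectivity. If $d_G(v_a)\ge 3$, then $v_a$ with $v$ and two further neighbours induces a claw; otherwise Lemma~\ref{lem:claw-critical} gives a new critical edge of $L(G)$. If $d_G(v_a)=1$, we run the nearest-critical-vertex argument from Case~(c) of Lemma~\ref{lem:share-odd}. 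We take a vertex $v'$ closest to the structure $\{v,v_a,v_x\}$ that carries a critical edge other than $e_a,e_b$. The shortest path to $v'$ ends in a non-critical edge, so $v'$ carries both a critical and a non-critical edge, which gives an extra critical edge of $L(G)$ and a contradiction. The same reasoning applies at $v_x$ and around $w$.

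\textbf{Main obstacle.} The hardest step is making this last part exhaustive: handling when $v_x$, $v_a$ coincide with vertices around $w$, and checking that the small degree-$\le 2$ configurations are all excluded rather than giving a genuine line graph, as $L(EG_0)$ did in Lemma~\ref{lem:share-even}. I expect the key point is that every such small configuration has only two critical edges in $G$, which contradicts Observation~\ref{obs:4-ov}. Once all cases are eliminated, $G$ contains an induced claw, so by~\cite{beineke1970characterization} no $H$ satisfies $L(H)=G$.
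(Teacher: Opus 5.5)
Your route is the paper's: rule out an odd shared endpoint, deduce that $v$ and $w$ are even of degree $2$ and that $N_G(v_a)\subseteq ev(G)$, and then either exhibit an induced claw at an odd endpoint of $e_a$ or $e_b$ or get a contradiction from the extra critical edge $e'$. Your local analysis at $v$ is more careful than the paper's Case~I, because you explicitly handle extra edges at $v$ equal to $e_b$ or $e_y$, and the case $v=w$.

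The step that fails as written is the leaf case. You claim that $d_G(v_a)=1$ on its own gives a contradiction via the nearest-critical-vertex argument, and it does not. Let $v_b$ and $v_y$ be the other endpoints of $e_b$ and $e_y$. Let $G$ be the tree formed by the path $v_a, v, v_x, v_y, w, v_b$ together with the paths $v_b, w_1, o_1$ and $v_b, w_2, o_2$. Then $L(G)$ has six odd vertices and exactly two critical edges, $ax$ and $by$. So every hypothesis holds and $d_G(v_a)=1$, yet there is no contradiction; $G$ fails to be a line graph only because of the induced claw at $v_b$. Your argument breaks because the vertex nearest to $\{v,v_a,v_x\}$ that carries a critical edge other than $e_a,e_b$ is $v_b$. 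The shortest path reaches $v_b$ through $e_b$, which is critical, so the claim that the path ends in a non-critical edge is false.

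The nearest-vertex argument works only when \emph{both} $v_a$ and $v_b$ are leaves. In that case none of $v,v_a,v_x,w,v_b,v_y$ carries a critical edge outside $\{e_a,e_b\}$. Hence the last edge of the shortest path cannot be $e_a$ or $e_b$, so it is non-critical, and you get the extra critical edge of $L(G)$. Equivalently, $e_a$ and $e_b$ are the only critical edges of $G$, contradicting Observation~\ref{obs:4-ov}; this is the point you anticipated. So the split must be: if $d_G(v_a)\ge 3$ or $d_G(v_b)\ge 3$, take the claw at that vertex; otherwise $d_G(v_a)=d_G(v_b)=1$ and you get the contradiction. This is exactly the paper's Case~II(a)/(b).

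Relatedly, ``the same reasoning applies at $v_x$'' is vacuous, because $v_x$ has no odd neighbour at all. An odd neighbour $u$ would make the vertex of $v_xu\neq e_a$ adjacent to $x$ by a critical edge, so your hedge there can be dropped. This same fact rules out $v_av_x\in E(G)$. You need that so Lemma~\ref{lem:claw-critical} at $v_a$ produces a genuinely new critical edge; otherwise $e_x$ could be the edge joining two leaves of the claw, and the lemma would only return $ax$.
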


\begin{proof}
    Let the two critical edges in $L(G)$ be $ax$ and $by$ where $a, b\in ov(L(G))$ and $x,y\in ev(L(G))$ (Figure~\ref{fig:matching-edges}). Observe that $e_a$ and 
    $e_x$ will have a common endpoint in $G$, and so will $e_b$ and $e_y$. Moreover, $e_a$ and $e_b$ are critical edges of $G$, while $e_x$ and $e_y$ are non-critical edges. The following cases arise.
    
    \begin{figure}[ht!]
        \centering
        \includegraphics[width=0.6\textwidth]{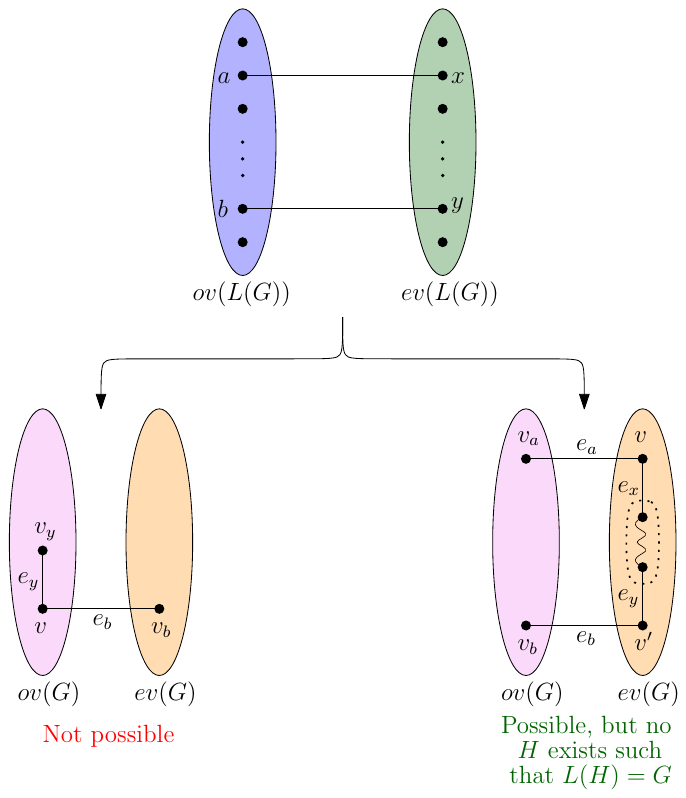}
        \caption{Cases of $G$ when both critical edges of $L(G)$ do not have a common endpoint.}\label{fig:matching-edges}
    \end{figure}
    
    \subparagraph*{Case I: Let $e_b$ and $e_y$ be incident on a common vertex with an odd degree, say $v\in ov(G)$.} Let the other endpoints of $e_b$ and $e_y$ be $v_b$ and $v_y$ respectively. Then, $v$ must have a neighbour other $v_b$ and $v_y$ as $v$ has an odd degree. Now, $v$ has a critical edge $e_b$ and a non-critical edge $e_y$ incident on it. So, by Lemma~\ref{lemma:always-critical}, any additional neighbour of $v$ will lead to yet another critical edge in $L(G)$ distinct from $ax$ and $ay$ --- a contradiction. So, this case is not possible. Similarly, the case where $e_a$ and $e_x$ have a common endpoint with an odd degree, is not possible.
    
    \subparagraph*{Case II: $e_x$ and $e_a$ are incident on $v\in ev(G)$ and $e_y$ and $e_b$ are incident on $v'\in ev(G)$.} We show that this case is possible, but there exists no graph $H$ such that $G=L(H)$. Let the other endpoint of $e_a$ be $v_a$ and that of $e_b$ be $v_b$. Vertices $v_a$, $v_b\in ov(G)$ as $e_a$ and $e_b$ are critical edges with $v,v'\in ev(G)$.

    Now, $v_a$ (resp. $v_b$) does not have any neighbour $w\in ov(G)$ as it already has a critical edge $e_a$ (resp. $e_b$) incident on it, and the edge $wv_a$ (resp. $wv_b$) will be a non-critical; this will result in an yet another critical edge in $L(G)$ distinct from $ax$ and $ay$ by Lemma~\ref{obs:critical-and-non-critical} --- a contradiction. Therefore, we have $N_G(v_a)\subseteq ev(G)$ and $N_G(v_b)\subseteq ev(G)$.

    Note that $|ov(L(G)| \ge 4$, by Observation~\ref{obs:4-ov}. So, $G$ must have at least $4$ critical edges, i.e., there exists some critical edge $e' \in E(G)$ other than $e_a$ and $uv_a$. We now show that there exists an induced claw in $G$. We look into the following cases.
    
    \textit{Case (a): $d_G(v_a) > 1$ or $d_G(v_b)>1$.} There exist edges $wv_a$ and $w'v_a$ (resp. $wv_b$ and $w'v_b$) other than $e_a$ (resp. $e_b$) incident on $v_a$ (resp. $v_b$) as it has an odd degree. Then, $w,w'\in N_G(v_a) \subseteq ev(G)$ (resp. $w,w'\in N_G(v_b) \subseteq ev(G)$). The vertices $v_a$, $v$ (resp. $v_b$ and $v'$), $w$ and $w'$ induce a claw in $G$ as otherwise there will be yet another critical edge in $L(G)$ distinct from $ax$ and $ay$, by Lemma~\ref{lem:claw-critical} ---  a contradiction.

    \textit{Case (b): $d_G(v_a) = 1$ and $d_G(v_b) = 1$.} Consider a path from $v_a$ (resp. $v_b$) to either endpoint of $e'$; such a path must contain $e_a$ (resp. $e_b$). Therefore, the critical edge $e'$ must be incident on a vertex $u$ in the connected component $C$ of $G\setminus\{e_a, e_b\}$ which includes $v$ and $v'$. Further without loss of generality, assume $u\in V(C)$ is the nearest vertex to $v$ in $C$ with a critical edge incident on it. Let $e' = uw$. Now, since $u$ is connected to $v$ in $C$ and no other vertex in the shortest path from $v$ to $u$ in $C$ has a critical edge incident on it (due to the choice of $u$), there exists a non-critical edge $w'u$, appearing in the shortest path from $v$ to $u$ in $C$ (or the non critical edge $e_x$ if $u = v$ or the non-critical edge $e_y$ if $u=v'$). So, $u$ has a critical edge and a non-critical edge incident on it. So, by Observation~\ref{obs:critical-and-non-critical}, there will be yet another critical edge in $L(G)$ other than $ax$ and $ay$ --- a contradiction. So, this case is not possible.

    The cases I and II are exhaustive. So, $G$ must contain an induced claw. Hence, there is no graph $H$ such that $G=L(H)$~\cite{beineke1970characterization}, i.e. $G$ is not a line graph of any graph.
\end{proof}
For a graph $H$, by Lemma~\ref{lem:share-odd}, Lemma~\ref{lem:share-even} and Lemma~\ref{lem:matching-edges}, we get the following theorem.
\begin{theorem}\label{thm:euler-EG1}
If $L^2(H)$ does not have an Euler path but $L^3(H)$ does, then $H = EG_0$.
\end{theorem}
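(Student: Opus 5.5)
The plan is to apply the three preceding lemmata with $G := L(H)$. Since $L(G) = L^2(H)$ has no Euler path and $L^2(G) = L^3(H)$ has one, $G$ meets the common hypothesis of Lemma~\ref{lem:share-odd}, Lemma~\ref{lem:share-even} and Lemma~\ref{lem:matching-edges}. By Observation~\ref{obs:4-ov}, $|ov(L^3(H))| = 2$. By Observation~\ref{obs:odd-critical}, the odd degree vertices of $L^2(G)$ are exactly the critical edges of $L(G)$, so $L(G) = L^2(H)$ has exactly two critical edges.

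The first step is to observe that two distinct edges of a graph either share an endpoint or do not. If they share one, the shared endpoint is either odd or even in $L(G)$. A critical edge has exactly one odd and one even endpoint, so if the two critical edges share a vertex $a$, then the two other endpoints have the parity opposite to $a$. This matches the configurations assumed in the lemmata, so the three cases (shared odd endpoint, shared even endpoint, disjoint) are exhaustive.

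The second step is the case analysis. In the shared-odd case, Lemma~\ref{lem:share-odd} says $G$ has no preimage under $L$. In the disjoint case, Lemma~\ref{lem:matching-edges} says the same. Both contradict $G = L(H)$, so these cases cannot occur. The remaining case is a shared even endpoint. There Lemma~\ref{lem:share-even} says the only graph $H'$ with $L(H') = G$ is $EG_0$. Since $L(H) = G$, we get $H = EG_0$, up to the preimage ambiguity handled next.

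The main obstacle is the phrase ``no graph $H$ other than $EG_0$'': by Whitney's result, preimages are unique except that $L(C_3) = L(K_{1,3})$. So I would check that $L(EG_0)$ is not a triangle. If it were, $H$ could be $EG_0$ or its triangle/claw partner. Given the five-edge structure $M$ in Case V of Lemma~\ref{lem:share-even}, $L(EG_0)$ has five vertices and is not $C_3$, so the preimage is unique and $H = EG_0$. I would also confirm that the lemmata's implicit assumptions (connected, not a path) hold: $H$ is connected by the standing convention, and if $H$ were a path or cycle, every $L^k(H)$ would again be a path or cycle with a fixed Euler path/circuit status, so the hypothesis that $L^2(H)$ has no Euler path but $L^3(H)$ does could not hold.
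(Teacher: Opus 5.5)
Your proposal is correct and is essentially the paper's argument. You set $G = L(H)$, use Lemmas~\ref{lem:share-odd} and~\ref{lem:matching-edges} to rule out the shared-odd and disjoint configurations (since $G$ has the preimage $H$), and read off $H = EG_0$ from Lemma~\ref{lem:share-even}. Your extra checks are sound details that the paper leaves implicit: exhaustiveness of the three configurations, uniqueness of the preimage via Whitney since $EG_0$ has five edges, and the exclusion of paths and cycles. One small caveat on the last check: for paths the Euler-path status is not literally fixed, but it can only be lost, never gained, so your conclusion still holds.
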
 

We now analyze the cases where both $G$ and $L(G)$ have an Euler path. We define a \emph{trailing path} as follows.

\begin{definition}[Trailing path]\label{def:trailing-path}
    A trailing path of a graph $G$ is a simple path which starts 
    at a vertex with degree $1$ in $G$, has each internal vertex of degree $2$ in $G$, and finally ends at a vertex having any even degree which is at least $4$ in $G$,
    i.e., a path of the form $v_1,v_2,v_3,...,v_k$ where $d_G(v_1)=1$, $d_G(v_i)=2$ for all $i$, $2\leq i \leq k-1$ and $d_G(v_k) \ge 4$, such that $d_G(v_k)$ is even, for some $k \ge 2$. The length of such a trailing path is $(k - 1)$.
\end{definition}

 Firstly, note that if $v_1, v_2, \ldots, v_k$ is a trailing path, then $v_iv_{i+1}$ is a critical edge for $i = 1$ and non-critical for $i \in \{2, 3, \ldots, k-1\}$. Secondly, observe that if $G$ has an Euler path then there are exactly two vertices of odd degree in $G$ and therefore at most two degree $1$ vertices in $G$. Therefore, there are at most two distinct trailing paths in $G$.

\begin{theorem}\label{thm:trail-path}
If $G$ has an Euler path, then $L(G)$ also has an Euler path if and only if one of the following holds true.
\begin{enumerate}
\item $G$ has exactly two trailing paths. Moreover, the largest value of $k$ such that $L^k(G)$ has an Euler path is equal to the minimum of the lengths of the two trailing paths in $G$, say $l$. Also, all of the graphs $L^i(G),1\leq i\leq l$ have an Euler path.
\item The two odd degree vertices in $G$ are adjacent, one has degree equal to $1$ and the other has degree equal to $3$. Here, there 
exists no $k\geq 2$ such that $L^k(G)$ has an Euler path.
\end{enumerate}
\end{theorem}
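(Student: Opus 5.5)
The plan is to reduce everything to counting critical edges, via Observation~\ref{obs:odd-critical} and Proposition~\ref{prop:two-odd}. Since $G$ has an Euler path, $ov(G)=\{p,q\}$. By Observation~\ref{obs:odd-critical}, $L(G)$ has an Euler path iff $G$ has exactly two critical edges. Every critical edge has exactly one endpoint in $\{p,q\}$ and its other endpoint in $ev(G)$, so the number of critical edges is $d_G(p)+d_G(q)-2\cdot[pq\in E(G)]$. I would then split on whether $pq$ is an edge.

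\textbf{The case $pq\in E(G)$.} Here we need $d_G(p)+d_G(q)=4$ with both degrees odd, which forces $\{1,3\}$; the option $\{1,1\}$ gives $K_2$, whose line graph is trivial. This is condition~2.

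\textbf{The case $pq\notin E(G)$.} Here $d_G(p)=d_G(q)=1$. From $p$ I would walk along degree-$2$ vertices until the first vertex of degree $\neq 2$. That vertex cannot be odd, since the only other odd vertex is the leaf $q$, and reaching $q$ would make $G$ a path, the degenerate case treated separately, as the definition of trailing path implicitly does. So it has even degree $\ge 4$, which gives a trailing path from $p$, and likewise from $q$. The converse of both cases is the same count read backwards. This settles the ``if and only if''.

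\textbf{Iteration under condition~1.} Let $v_1,\dots,v_k$ be a trailing path of length $l=k-1\ge 2$. Using Proposition~\ref{Deguv}, in $L(G)$ the vertex $v_1v_2$ has degree $1$, the vertices $v_iv_{i+1}$ for $2\le i\le k-2$ have degree $2$, and $v_{k-1}v_k$ has degree $d_G(v_k)$, which is even and at least $4$. So $L(G)$ has a trailing path of length $l-1$. Moreover $L(G)$ has an Euler path, and it is not a path since it contains the clique $K_{d_G(v_k)}$. Induction then gives that $L^i(G)$ has two trailing paths of lengths $l_1-i,\,l_2-i$ for all $i<l=\min(l_1,l_2)$, hence an Euler path for $i\le l$.

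At step $l$ some trailing path has length $1$, i.e.\ it is the single edge $v_1v_k$. In the next line graph this edge becomes an odd vertex of degree $d(v_k)-1\ge 3$, so that trailing path is destroyed. Applying the dichotomy above to $L^{l}(G)$:
\begin{itemize}
\item if both lengths equal $l$, the two odd vertices of $L^{l}(G)$ have degrees $\ge 3$, so neither condition holds;
\item otherwise the degree-$1$ odd vertex of $L^{l}(G)$ sits at the end of a path whose next vertex has degree $2$ or even degree $\ge 4$, so it is not adjacent to the other odd vertex, and condition~2 fails.
\end{itemize}
In either case $L^{l+1}(G)$ has no Euler path. Under condition~2, write $N(q)=\{p,a,b\}$ with $a,b$ even. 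The odd vertices of $L(G)$ are $qa$ and $qb$, of degrees $d(a)+1,\,d(b)+1\ge 3$, so $L^2(G)$ has no Euler path.

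\textbf{Main obstacle: no later Euler path.} It remains to show that once $L^{j}(G)$ fails (for $j=l+1$, resp.\ $j=2$), no later iterate has an Euler path. Suppose $L^{s}(G)$ fails and $L^{s+1}(G)$ succeeds for some $s\ge j$. Applying Theorem~\ref{thm:euler-EG1} with $H=L^{s-2}(G)$ (legitimate because $s\ge 2$ in both cases) forces $L^{s-2}(G)=EG_0$. I would then rule this out by inspecting $EG_0$ directly.
\begin{itemize}
\item If $s-2\ge 1$, then $EG_0$ would have to be a line graph (of $L^{s-3}(G)$). From Case~V of Lemma~\ref{lem:share-even}, $EG_0$ should contain an induced claw or otherwise fail to be $L$ of anything, which gives the contradiction.
\item If $s=2$ (possible only under condition~2), then $G=EG_0$ itself. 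So I must check that $EG_0$ has no two adjacent odd vertices of degrees $1$ and $3$ with all other degrees even.
\end{itemize}
This finite verification on $EG_0$, reconstructed from the edge set $M$ in Case~V, is the step I expect to need the most care.
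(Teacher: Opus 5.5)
Your proposal is correct and follows essentially the same route as the paper. You count critical edges; your formula $d_G(p)+d_G(q)-2\cdot[pq\in E(G)]$ neatly packages the paper's two cases. You then propagate the two trailing paths by induction, and rule out any later Euler path via Theorem~\ref{thm:euler-EG1}, using that $EG_0$ is not a line graph and has no Euler path.

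One small slip: $s=2$ is not confined to condition~2, since it also occurs under condition~1 when $l=1$. Your finite check on $EG_0$ covers only the condition-2 configuration, but both instances are dispatched at once by noting, as the paper does, that $EG_0$ has no Euler path (Observation~\ref{obs:eg0} with $k=0$). Hence $EG_0$ cannot equal $G$.
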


\begin{proof}
Since $G$ has an Euler path, there are exactly two odd degree vertices in $G$. Let the two odd degree vertices be $u$ and $v$. 
Now, since $L(G)$ has an Euler path, there are exactly two critical edges in $G$ by Observation \ref{obs:odd-critical}. Now, there will be two cases:

\subparagraph{Case I: $u$ and $v$ are not adjacent.} $u$ and $v$ must have only even degree vertices as neighbours, and hence all edges incident on $u$ and $v$ are critical edges. Since $G$ has exactly two critical edges, $u$ and $v$ must have exactly one neighbour each, i.e. $d_G(u)=d_G(v)=1$. So, there are exactly two trailing paths in $G$ (Since $G$ is a connected graph not isomorphic to a path). Let their lengths be $l$ and $l'$ respectively with $l\leq l'$. We prove the following claims, completing the analysis for this case.

\begin{claim}\label{clm:0-to-l-1}
    For $k \in \{0, 1, \ldots, l - 1\}$, $L^k(G)$ has an Euler path, and exactly two trailing paths, of length $l - k$ and $l' - k$.
\end{claim}
\begin{claimproof}
    We proceed via induction. For $k = 0$, $L^0(G) = G$ has an Euler path and two trailing paths of length $l$ and $l'$, by assumption. Assume for some $k \in \{0, \ldots, l - 2\}$, $L^k(G)$ has an Euler path and two trailing paths of lengths $p = l - k \ge 2$ and $p' = l' - k \ge 2$. Let these two trailing paths of $L^k(G)$ be $a,v_1,v_2,\ldots,v_p$ and $b,v_1',v_2',\ldots,v_{p'}'$ such that $d_{L^k(G)}(a)=d_{L^k(G)}(b)=1$, $d_{L^k(G)}(v_i)=2$ for $i\in\{1,2,\ldots,p-1\}$, $d_{L^k(G)}(v_i')=2$ for $ i\in\{1,2,\ldots,p'-1\}$ and $d_{L^k(G)}(v_p),d_{L^k(G)}(v'_{p'}) \geq 4$ is even. Since $L^k(G)$ has an Euler path, $ov(L^k(G)) = \{a, b\}$; this implies that $av_1, bv'_1 \in E(L^k(G))$ are the only critical edges of $L^k(G)$.  Let $w, x, y, z \in V(L^{k+1}(G))$ correspond to the edges $av_1, bv_1', v_{p-1}v_p, v_{p'-1}'v_{p'}' \in E(L^k(G))$ respectively. $w$ and $x$ are the only odd degree vertices in $L^{k+1}(G)$ as they correspond to the critical edges $av_1 \in E(L^k(G))$ and $bv_1' \in E(L^k(G))$; all other vertices of $L^{k+1}(G)$ will be of even degree as remaining edges of $L^k(G)$ are non-critical. Therefore $L^{k+1}(G)$ has an Euler path. Moreover, $L^{k+1}(G)$ exactly two trailing paths, starting at vertices $w$ and $x$ respectively (each having degree $1$) and ending at vertices $y$ and $z$ respectively (each having degree at least $4$). So, the length of the two trailing paths in $L^{k+1}(G)$ is $p-1 = l - k - 1$ and $p'-1 = l - k - 1$. This proves the claim.
\end{claimproof}

\begin{claim}
    $L^l(G)$ has an Euler path and for all $k \ge l + 1$, $L^k(G)$ does not have an Euler path.
\end{claim}
\begin{claimproof}
    From Claim~\ref{clm:0-to-l-1}, $L^{l-1}(G)$ has an Euler path and exactly two trailing paths, of lengths $1$ and $p' = l' - l - 1$. Let $a, v_1$ and $b, v'_1, v'_2, \ldots, v'_{p'}$ be the trailing paths of $L^{l - 1}(G)$. Therefore, $d_{L^{l - 1}(G)}(a) = d_{L^{l-1}(G)}(b) = 1$, $d_{L^{l - 1}(G)}(v_1),  d_{L^{l-1}(G)}(v'_{p'}) \ge 4$ is even, and $d_{L^{l-1}(G)}(v'_{i}) =2$ for $i \in \{1, 2, \ldots, p' - 1\}$. Since $L^{l - 1}(G)$ has an Euler path, $ov(L^{l-1}(G)) = \{a, b\}$. Moreover, since $v_1$ is the only neighbour of $a$ and $v'_1$ is the only neighbour of $b$ in $L^{l-1}(G)$, $av_1, bv'_1$ are the only critical edges of $L^{l-1}(G)$. Let $x$ and $y$ be vertices in $L^l(G)$ corresponding to edges $av_1, bv'_1 \in L^{l-1}(G)$ respectively. Therefore, $L^l(G)$ has an Euler path, and $ov(L^l(G)) = \{x, y\}$. Now, either $x$ and $y$ are adjacent vertices in $L^l(G)$ or they are not. We show that in either case $L^{l + 1}(G)$ has no Euler path.

    If $x$ and $y$ can be adjacent if and only if $p' = 1$ and $v'_1 = v_1$ (as $av_1$ and $bv'_1 = bv_1$ would be both incident on $v_1$ in $L^{l-1}(G)$, implying $xy \in E(L^l(G))$). By Proposition~\ref{Deguv}, $d_{L^l(G)}(x) = d_{L^l(G)}(y) = 1 + d_{L^{l-1}(G)}(v_1) - 2 \ge 3$. Therefore both $x$ and $y$ have at least two neighbours each, outside $\{x,y\} = ov(L^l(G))$. Hence, there are at least four critical edges in $L^l(G)$, implying that $|ov(L^{l+1}(G))| \ge 4$, i.e. $L^{l+1}(G)$ has no Euler path.

    If $x$ and $y$ are not adjacent, then by Proposition~\ref{Deguv}, $d_{L^l(G)}(x) = d_{L^{l-1}(G)}(a) + d_{L^{l-1}(G)}(v_1) - 2 \ge 3$. Moreover, all neighbours of $x$ are outside $ov(L^l(G)) = \{x,y\}$. Therefore $L^{l}(G)$ has at least three critical edges in $L^l(G)$, implying that $|ov(L^{l+1}(G))| \ge 3$, i.e. $L^{l+1}(G)$ has no Euler path.

    Finally, we show that $L^k(G)$ for $k \ge l + 2$ has no Euler path. For the sake of contradiction, assume there exists a smallest $k$ such that $L^k(G)$ has an Euler path, for some $k \ge l +2 \ge 3$. By the minimality of $k$, $L^{k-1}(G)$ has no Euler path. By Theorem~\ref{thm:euler-EG1}, this implies $L^{k-3}(G) = EG_0$. Since $EG_0$ is the line graph of no graph, we cannot have $k > 0$. Therefore $k=0$, implying $G = EG_0$. However $EG_0$ has no Euler path --- a contradiction to our assumption that $G$ has an Euler path. Therefore, $L^k(G)$ for $k \ge l + 2$ has no Euler path.
\end{claimproof}


\subparagraph{Case II: $u$ and $v$ are adjacent.} Then at least one of $u$ and $v$ must have degree at least $3$ since $G$ has at least three vertices. Without loss of generality, let $d_G(u) \geq 3$. Now, there are at least two other neighbours $u_1$, $u_2$ of $u$ other than $v$ and therefore $u_1$ and $u_2$ have an even degree. Therefore, there are at least two critical edges $uu_1$, $uu_2$ of $G$ incident on $u$. Since $G$ has exactly two critical edges, $uu_1$ and $uu_2$ are the only critical edges of $G$. Observe that, $u$ cannot have any neighbour $w$ other than $u_1$, $u_2$ and $v$ (as $uw$ would be a critical edge), and similarly $v$ cannot have any neighbour $w'$ other than $u$ (as $vw'$ would be a critical edge). 
Therefore, $d_G(v)=1$ and $d_G(u) = 3$. In this case, $uu_1$ and $uu_2$ correspond to the two odd degree vertices, say $x$ and $y$ respectively, in $L(G)$. Note that $xy \in E(L(G))$ as $uu_1 \in E(G)$ and $uu_2 \in E(G)$ are both incident on $u$. Moreover, by Proposition~\ref{Deguv}, $d_{L(G)}(x) = d_G(u) + d_G(u_1) - 2 \ge 3$ and $d_{L(G)}(y) = d_G(u) + d_G(u_2) - 2 \ge 3$. As $x$ and $y$ have at least two neighbours in $L(G)$ outside $\{x, y\} = ov(L(G))$, $L(G)$ has at least $4$ critical edges. This implies that $L^2(G)$ cannot have an Euler path. Suppose there is a $k> 2$ such that $L^k(G)$ has an Euler path, then Theorem~\ref{thm:euler-EG1} rules out the existence of $L(G)$, which is a contradiction. Thus, there does not exist $k\geq 2$ such that $L^k(G)$ has an Euler path. 
\end{proof}


\subsection{Efficient algorithm for finding all higher order line graphs having an Euler path}

We now start the discussion on the algorithmic aspect. Consider the following problem. 

\defproblem{Line Graph Euler Path Indices (LGEPI)}{A simple connected graph $G$.}{Output the set of all integers $k$ such that $L^k(G)$ has an Euler path.}

We design a polynomial time algorithm for \LGEPI{} and as a consequence, we also derive a polynomial bound on the largest $k$ such that $L^k(G)$ has an Euler path. Before we design the algorithm for \LGEPI{}, we make an observation regarding the graph $EG_0$ (Figure~\ref{fig:EG_0}).

\begin{observation}\label{obs:eg0}
    $L^k(EG_0)$ has an Euler path if and only if $k \in \{1, 3\}$.
\end{observation}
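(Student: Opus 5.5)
The plan is to compute $EG_0$ and its first few iterated line graphs explicitly, read off the odd-degree vertices for small $k$, and then use Theorem~\ref{thm:euler-EG1} and Theorem~\ref{thm:trail-path} to rule out every larger $k$.

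\textbf{Identifying $EG_0$.} Only the figure defines $EG_0$, so I recover it from Case V of Lemma~\ref{lem:share-even}, where $L(EG_0)$ is the graph induced by $M$. That graph is a triangle $v v' v_{xy}$ with a pendant path $v_{xy}\,u\,u'$ attached at $v_{xy}$. Its unique preimage is a claw with centre $c$ and leaves $p,q,r$, together with a path $r\,s\,t$ hanging off the leaf $r$. So I take $EG_0$ to have edges $cp, cq, cr, rs, st$, with degrees $d(c)=3$, $d(p)=d(q)=d(t)=1$ and $d(r)=d(s)=2$.

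\textbf{The cases $k \le 3$.} I use Proposition~\ref{prop:two-odd} and Observation~\ref{obs:odd-critical} throughout.
\begin{itemize}
\item $k=0$: $EG_0$ has four odd vertices $c,p,q,t$, so it has no Euler path.
\item $k=1$: in $L(EG_0)$ the only odd vertices are $v_{xy}$ (degree $3$) and $u'$ (degree $1$), so there is an Euler path.
\item $k=2$: the critical edges of $L(EG_0)$ are $v_{xy}v$, $v_{xy}v'$, $v_{xy}u$ and $uu'$. These four edges become four odd vertices of $L^2(EG_0)$, so there is no Euler path.
\item $k=3$: I write down $L^2(EG_0)$ explicitly; it has $6$ vertices. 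I then list its critical edges and check that there are exactly two, which gives an Euler path in $L^3(EG_0)$. This is exactly the configuration Lemma~\ref{lem:share-even} anticipates.
\end{itemize}

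\textbf{The cases $k \ge 4$.} Set $G' = L^3(EG_0)$. It has an Euler path, so Theorem~\ref{thm:trail-path} applies to it.
\begin{itemize}
\item \emph{$k=4$.} I need to show that neither condition of Theorem~\ref{thm:trail-path} holds for $G'$. For this I compute the degrees of $G'$ near its two odd vertices. I expect $G'$ to have no degree-$1$ vertex at all, since the two critical edges of $L^2(EG_0)$ have endpoints of degree at least $2$. Then $G'$ has no trailing path, and the pendant degree-$1$/degree-$3$ configuration is also impossible. Hence $L^4(EG_0)$ has no Euler path.
\item \emph{$k \ge 5$.} Suppose $L^k(EG_0)$ had an Euler path for some $k \ge 5$, and take the least such $k$. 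Then $L^{k-1}(EG_0)$ has none, so Theorem~\ref{thm:euler-EG1} gives $L^{k-3}(EG_0) = EG_0$. But $k-3 \ge 2$, so $L^{k-3}(EG_0)$ is a line graph, whereas $EG_0$ contains an induced claw at $c$ and therefore is not a line graph~\cite{beineke1970characterization}. This is a contradiction. (Alternatively, $|V(L^{k-3}(EG_0))| > 6$.)
\end{itemize}

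\textbf{Main obstacle.} The only step that needs genuine care is the hand computation of $L^2(EG_0)$ and $L^3(EG_0)$. It must confirm three things: exactly two critical edges in $L^2(EG_0)$, and, in $G'=L^3(EG_0)$, both the degrees at the two odd vertices and the absence of degree-$1$ vertices, which the $k=4$ step relies on. The arguments for all other $k$ are formal consequences of the earlier theorems.
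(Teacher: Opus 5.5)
Your argument is correct and matches the reasoning the paper implicitly relies on: the paper states the observation without proof. It rests on direct computation for small $k$ and on the same Theorem~\ref{thm:euler-EG1}-plus-induced-claw argument the paper uses in its algorithm. Your use of Theorem~\ref{thm:trail-path} at $k=4$ amounts to noting that the two degree-$3$ odd vertices of $L^3(EG_0)$ carry four critical edges.

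Two slips need fixing. First, $L^2(EG_0)$ has $5$ vertices and $6$ edges; it is $L^3(EG_0)$ that has $6$ vertices. Second, your parenthetical vertex-count alternative is false for $k=5,6$, and in particular $|V(L^3(EG_0))|=6=|V(EG_0)|$. Rely on the claw argument alone.
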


\begin{theorem}\label{thm:alg-LGEPD}
    There exists an algorithm solving \LGEPI{} on a connected graph $G$ with $n$ vertices and $m$ edges in time $\mathcal{O}(n^2m)$.
\end{theorem}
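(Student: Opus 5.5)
The algorithm computes $G$, $L(G)$ and $L^2(G)$ explicitly where that is cheap enough, checks their parities, and then uses Theorem~\ref{thm:euler-EG1}, Theorem~\ref{thm:trail-path} and Observation~\ref{obs:eg0} to handle all larger $k$ without building any further line graph.

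\textbf{Step 1 (degenerate cases).} If $G$ is a path $P_n$, then $L^k(G)=P_{n-k}$. This has an Euler path exactly when it has at least one edge, so the output is $\{0,\dots,n-2\}$. If $G$ is a cycle, $L^k(G)=G$ has no odd vertices, so the output is empty. If $G=EG_0$, Observation~\ref{obs:eg0} gives the answer $\{1,3\}$. All of these checks take $\OO(n+m)$ time.

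\textbf{Step 2 (indices $0$, $1$, $2$).} Compute all degrees of $G$. This decides index $0$ via Proposition~\ref{prop:two-odd}. Index $1$ is decided by counting critical edges of $G$ (Observation~\ref{obs:odd-critical}), in $\OO(m)$ time. Index $2$ requires counting critical edges of $L(G)$. For each vertex $v$ of $G$, look at the edges incident on $v$. Each pair consisting of one critical and one non-critical edge at $v$ gives one critical edge of $L(G)$, so the count is $\sum_v c_v(d_G(v)-c_v)$, where $c_v$ is the number of critical edges at $v$. Computing this takes $\OO(m)$ time. The three answers for $k=0,1,2$ are therefore obtained well within $\OO(n^2m)$; in fact $L(G)$ itself has at most $nm$ edges and could be built explicitly.

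\textbf{Step 3 (indices $k\ge 3$).} Take any $k\ge 3$ with $L^k(G)$ having an Euler path, and look at the smallest index $j\le k$ in the run of consecutive Euler-path indices ending at $k$. There are two possibilities.
\begin{itemize}
\item If $j\ge 3$, then $L^{j-1}(G)$ has no Euler path but $L^j(G)$ does. Theorem~\ref{thm:euler-EG1} applied to $H=L^{j-3}(G)$ forces $L^{j-3}(G)=EG_0$. Since $EG_0$ is the line graph of no graph, $j=3$ and $G=EG_0$, which Step 1 already handled.
\item Otherwise the run starts at some $j\le 2$ and contains two consecutive indices $i,i+1$ with $i\le 2$. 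Applying Theorem~\ref{thm:trail-path} to $L^i(G)$ shows that the run is $\{i,\dots,i+\ell\}$, where $\ell$ is the shorter trailing-path length in $L^i(G)$ (option~2 of that theorem gives no further index).
\end{itemize}
In the second case, Claim~\ref{clm:0-to-l-1} shows that trailing paths shorten by exactly one per iteration. So if $i\ge 1$, the trailing paths of $L^i(G)$ already come from trailing paths of $G$ (or $G$ is $EG_0$ or a path). It is therefore enough to apply Theorem~\ref{thm:trail-path} directly to $G$ when indices $0$ and $1$ both succeed. If $G$ has two trailing paths, find them by walking from its two degree-$1$ vertices in $\OO(n)$ time and output $\{0,\dots,\ell\}$. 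Otherwise, the output is the set of indices found in Step 2, together with the indices from Observation~\ref{obs:eg0} when applicable.

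The main obstacle I expect is making this case analysis exhaustive: an Euler-path run starting at index $1$ or $2$ must be traced back to the structure of $G$. I would handle it by applying Theorem~\ref{thm:trail-path} to $L(G)$ or $L^2(G)$ and using the trailing-path shortening argument of Claim~\ref{clm:0-to-l-1}, with $L(G)$ computed explicitly in $\OO(nm)$ time. Its trailing paths can be found by traversal, so the $\OO(n^2m)$ bound holds comfortably. Since a trailing path has length less than the number of vertices, the largest $k$ is at most the size of $L^2(G)$, which gives the $\OO(nm)$ bound.
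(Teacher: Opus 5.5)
\textbf{Gap: index $3$ is never tested.} Your treatment of runs starting at $j\ge 3$ matches the paper, but the second bullet of Step~3 loses a case. The parenthetical claim that option~2 of Theorem~\ref{thm:trail-path} ``gives no further index'' only rules out indices beyond $i+1$. Suppose $i=2$ and the two odd vertices of $L^2(G)$ are adjacent with degrees $1$ and $3$. Then Theorem~\ref{thm:trail-path} says $L^3(G)$ also has an Euler path. Index $3$ is never tested in Step~2, and your reduction to trailing paths of $G$ covers only option~1, so this index is silently dropped.

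This case actually occurs. Let $G$ be the tree with a vertex $c$ of degree $3$ and pendant paths $c\,p_2\,p_1\,p_0$, $c\,q_1\,r_1$ and $c\,q_2\,r_2$.
\begin{itemize}
\item $G$ has four odd vertices.
\item In $L(G)$, the vertices $p_0p_1,p_1p_2,p_2c,cq_1,cq_2,q_1r_1,q_2r_2$ have degrees $1,2,3,3,3,1,1$. So $L(G)$ has six odd vertices, but only two critical edges, namely the two edges at $p_1p_2$.
\item Hence $L^2(G)$ has degrees $1,3,4,4,4,2,2$, and its two odd vertices are adjacent.
\item Consequently $L^3(G)$ again has exactly two odd vertices.
\end{itemize}
The correct output is $\{2,3\}$. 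Your algorithm fails at indices $0$ and $1$, falls back on the Step~2 indices, and returns $\{2\}$.

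\textbf{Repair.} The fix is cheap, and it is the role played by the paper's explicit computation of $ov(L^3(G))$: decide index $3$ as well. For example, build $L(G)$ in $\mathcal{O}(nm)$ time and count the critical edges of $L^2(G)$ as $\sum_{x\in V(L(G))} c_x\,(d_{L(G)}(x)-c_x)$. This is your Step~2 formula applied one level up. With that added, the case analysis closes, since option~2 at $i\le 1$ yields only indices $\le 2$. Your route then even runs in $\mathcal{O}(nm)$ time.

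\textbf{Steps that need proofs rather than citations.}
\begin{itemize}
\item You claim that trailing paths of $L^i(G)$ come from trailing paths of $G$. This is the converse of Claim~\ref{clm:0-to-l-1}, not what that claim proves. It is true: a degree-$1$ vertex of $L(H)$ is an edge from a leaf to a degree-$2$ vertex, and following the degree-$2$ chain gives a trailing path of $H$ that is one longer. But you must write this argument.
\item You also need the parity argument that $G$ (and $L(G)$) then have exactly two odd vertices. The two leaf edges are the only critical edges, so every other vertex shares the even parity of the leaf's neighbour.
\end{itemize}

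Finally, your separate treatment of paths is a genuine point in your favour. Theorem~\ref{thm:trail-path} tacitly excludes paths, and the paper's algorithm does not special-case them.
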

\begin{proof}
    Consider the following algorithm. For an input graph $G$, we first check if $G = EG_0$, and in that case we output $\{1, 3\}$ (by Observation~\ref{obs:eg0}). Otherwise, we compute the line graphs $L(G)$, $L^2(G)$, and the set $ov(L^3(G))$. Note that $\Delta(G), \Delta(L(G)), \Delta(L^2(G)), \Delta(L^3(G)) = \mathcal{O}(n)$. Hence, $|E(L^2(G))| = |V(L^3(G))| \le \mathcal{O}(n) \cdot |E(L(G))| \le \mathcal{O}(n^2) \cdot |E(L^2(G))| = \mathcal{O}(n^2m)$. We can compute these in time $\mathcal{O}(n^2m)$. Moreover, we can check whether $G$, $L(G)$, $L^2(G)$ and $L^3(G)$ have Euler paths by just checking the number of odd degree vertices. We now consider $L^k(G)$ for $k \ge 4$. We analyze the following cases.

    \begin{itemize}
        \item If $L^2(G)$ has no Euler path, then $L^3(G)$ cannot have an Euler path as $G \ne EG_0$ (by Theorem~\ref{thm:euler-EG1}). Moreover for $k \ge 3$, if $L^k(G)$ has no Euler path, then $L^{k+1}(G)$ cannot have an Euler path, as Theorem~\ref{thm:euler-EG1} would imply $L^{k-2}(G) = L(L^{k-2}(G)) = EG_0$, a contradiction as $EG_0$ is not a line graph of any graph (since $EG_0$ has an induced claw). Therefore, none of $L^2(G), L^3(G), L^4(G), \ldots$ have Euler paths. 
        \item Otherwise if $L^2(G)$ has an Euler path, then either $L^3(G)$ has an Euler path or it does not. We work out these two cases as follows.
        \begin{itemize}
            \item If $L^3(G)$ has no Euler path, then we show that none of $L^4(G), L^5(G), \ldots$ have an Euler path. For the sake of contradiction, assume that for some $k \ge 3$, $L^{k}(G)$ has no Euler path but $L^{k+1}(G)$ has an Euler path. By Theorem~\ref{thm:euler-EG1}, $L^{k-2} = L(L^{k-3}(G))=EG_0$. But this is a contradiction as $EG_0$ is not a line graph of any graph.
            \item If $L^3(G)$ has an Euler path, then by Theorem~\ref{thm:trail-path}, we have the following cases.
            \begin{itemize}
                \item If $L^2(G)$ has two trailing paths, then $L^{i}(G)$ has an Euler path when $4\leq i \leq 2+\ell$ but not when $i\geq 2+\ell+1$, where $\ell$ is the length of the smallest trailing path in $L^2(G)$. Note that $\ell$ can be computed in $\mathcal{O}(|E(L^2(G))|) = \mathcal{O}(n^2m)$ time. 
                \item Otherwise, $L^2(G)$ must have two adjacent odd degree vertices: one vertex of degree $1$, and another of degree $3$. Then, none of $L^4(G), L^5(G), \ldots$ have an Euler path.
            \end{itemize}
        \end{itemize}
    \end{itemize}
Therefore, \LGEPI{} admits an algorithm with running time $\mathcal O(n^2m)$. We provide the pseudocode of the algorithm for completeness (see Algorithm~\ref{alg:LGEPI}).

    \begin{algorithm}[H]
        \caption{ \hfill \textbf{Input:} Graph $G$ \hfill \textbf{Output:} All $k$ such that $L^k(G)$ has an Euler path}\label{alg:LGEPI}
        \begin{algorithmic}[1]
            \If{$G = EG_0$}
                \State{\Return $\{1, 3\}$}
            \EndIf
            \State{Compute $L(G)$, $L^2(G)$ and $ov(L^3(G))$} \Comment{In time $\mathcal{O}(n^2m)$}
            \State{$S \gets \emptyset$} \Comment{We would compute our final output in $S$}
            \For {$i \in \{0, 1, 2, 3\}$}
                \If{$|ov(L^i(G))| = 2$} \Comment{$L^i(G)$ has an Euler path}
                    \State{$S \gets S \cup \{i\}$}
                \EndIf
            \EndFor
            \If{$|ov(L^2(G))| \ne 2$} \Comment{$L^2(G)$ has no Euler path}
                \State{\Return $S$}
            \Else \Comment{$L^2(G)$ has an Euler path}
                \If{$|ov(L^3(G))| \ne 2$} \Comment{$L^3(G)$ has no Euler path}
                    \State{\Return $S$}
                \Else \Comment{$L^3(G)$ has an Euler path}
                    \If{$L^2(G)$ has two trailing paths of length at least $1$}
                        \State{$p, p' \gets$ lengths of the trailing paths of $L^2(G)$} \Comment{$\mathcal{O}(|E(L^2(G))|)$ time}
                        \State{\Return $S \cup \{4, 5, \ldots, \min(p,p') + 2\}$}
                    \Else \Comment{$L^2(G)$ has two adjacent vertices of degrees $1$ and $3$ respectively}
                        \State{\Return $S$}
                    \EndIf
                \EndIf
            \EndIf
        \end{algorithmic}
    \end{algorithm}

\end{proof}

Note that the largest value of $k$ such that $L^k(G)$ has an Euler path is at most $2 + |V(L^2(G))| = \mathcal{O}(|V(G)||E(G)|)$.

\begin{corollary}
    For every graph $G$ of $n$ vertices and $m$ edges, there exists an integer $k_0 = \mathcal{O}(nm)$ such that $L^k(G)$ does not have an Euler path for all $k \ge k_0$.
\end{corollary}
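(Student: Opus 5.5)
The corollary should follow directly from the case analysis behind Theorem~\ref{thm:alg-LGEPD}. The plan is to show that the set of indices output by Algorithm~\ref{alg:LGEPI} is bounded above by $2 + |V(L^2(G))|$, and that this quantity is $\mathcal{O}(nm)$. We then take $k_0$ to be one more than this maximum.

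First, we handle the special graph. If $G = EG_0$, Observation~\ref{obs:eg0} shows that the only indices are $\{1,3\}$, so $k_0 = 4$ suffices.

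Otherwise, we follow the branches in the proof of Theorem~\ref{thm:alg-LGEPD}.
\begin{enumerate}
\item If $L^2(G)$ or $L^3(G)$ has no Euler path, then no $L^k(G)$ with $k \ge 4$ has one. The argument is that a first reappearance of an Euler path would, by Theorem~\ref{thm:euler-EG1}, force some $L^{j}(G)$ with $j \ge 1$ to equal $EG_0$. This is impossible because $EG_0$ contains an induced claw and so is not a line graph. In this case $k_0 = 4$ works.
\item If both $L^2(G)$ and $L^3(G)$ have Euler paths, we apply Theorem~\ref{thm:trail-path} to $L^2(G)$. In the adjacent degree-$1$/degree-$3$ case, again $k_0 = 4$ works. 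In the two-trailing-paths case, the largest good index is $2 + \ell$, where $\ell$ is the length of the shorter trailing path of $L^2(G)$.
\end{enumerate}

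It remains to bound $\ell$. A trailing path is a simple path, so its length is at most $|V(L^2(G))| - 1$. Moreover, $|V(L^2(G))| = |E(L(G))| = \sum_{v} \binom{d_G(v)}{2}$. Since every degree is less than $n$, this sum is at most $\tfrac{n}{2}\sum_v d_G(v) = nm$, which gives $\ell = \mathcal{O}(nm)$.

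Setting $k_0 = \max(4,\, 3 + \ell) = \mathcal{O}(nm)$ then completes the proof. The only point needing care is the size estimate $|E(L(G))| = \mathcal{O}(nm)$, which we obtain through the degree-sum identity above rather than through the cruder bounds used in the running-time analysis. The rest is bookkeeping over the cases already established in Theorem~\ref{thm:alg-LGEPD}.
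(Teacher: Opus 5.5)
Your proposal is correct and is essentially the paper's own argument: the paper also reads off from the case analysis in the proof of Theorem~\ref{thm:alg-LGEPD} that the largest $k$ for which $L^k(G)$ has an Euler path is at most $2+|V(L^2(G))|$, and concludes $\mathcal{O}(nm)$. Your explicit estimate $|V(L^2(G))|=\sum_v\binom{d_G(v)}{2}\le nm$ fills in the size bound that the paper only asserts. Like the paper, you tacitly set aside the case where $G$ is a path, which is trivial since $L^k(P_n)=P_{n-k}$.
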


\section{Maximum Degree Growth on Higher Order Line Graphs}

In this section, we study the growth of the maximum degree in $L^k(G)$, $k \ge 0$, for a graph $G$. When $G$ is a path, a cycle or a claw, the maximum degree becomes of $L^k(G)$ stays $0$, $1$ or $2$ for $k \ge 1$. Hence we restrict our focus to other simple connected graphs, i.e. graphs $G$ that satisfy $G\in \mathcal G=\{G \mid G\text{ is a simple connected graph, }\Delta(G)\geq3 \text{ and }|E(G)|\geq 4\}$; $\mathcal G$ is called the set of \emph{prolific graphs}~\cite{caro2022index}.

We start by showing that the works of Hartke and Higgins~\cite{stephen1999growth} imply that for any prolific graph $G$, there exists a constant rational number $\dgc(G)$ and an integer $k_0$ such that for all $k \ge k_0$, $\Delta(L^k(G)) = \dgc(G) \cdot 2^{k-4} + 2$. We show that $\{\dgc(G) \mid G \in \mathcal G\}$ has first, second, third, fourth, and fifth minimums, namely, $c_1 = 3$, $c_2 = 4$, $c_3 = 5.5$, $c_4 = 6$ and $c_5=7$; the third minimum stands out surprisingly from the other four. We use structural properties of subgraphs and their corresponding images under line graphs as a tool to compare the parameter $\dgc$ of two distinct graphs. Moreover, for $i \in \{1, 2, 3, 4\}$, we provide a complete characterisation of $\mathcal G_i = \{\dgc(G) = c_i \mid G \in \mathcal G \}$. Apart from this, we show that the set $\{\dgc(G) \mid G \in \mathcal G, 7 < \dgc(G) < 8\}$ is countably infinite by providing a family of graphs attaining distinct values of $\dgc(G)$ in this range.

\subsection{Definitions and tools}\label{subsec:toolsmaxdeg}

We start by reiterating the definition of the \textit{Maximum Degree Growth Property Index} ($\MDGPI$) from the works of Hartke and Higgins~\cite{stephen1999growth}. Their work proves that for every prolific graph, $\MDGPI$ exists.

\begin{definition}[Maximum Degree Growth Property Index ($\MDGPI$)]\label{def:MDGPI}
The $\MDGPI$ of a graph $G$, denoted $\MDGPI(G)$, is the smallest integer $K$ such that for all $k \ge K+1$,
\[\Delta(L^k(G)) = 2\cdot\Delta(L^{k-1}(G)) - 2\]
\end{definition}

\begin{observation}\label{Formula}
    If $\MDGPI(G)\leq K$ for a graph $G$, then for every $k \ge K$, 
    \[\Delta(L^k(G))=(\Delta(L^K(G))-2)\cdot 2^{k-K}+2\]
\end{observation}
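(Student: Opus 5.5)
The plan is to argue by induction on $k$, starting at $k = K$. The only tool needed is the defining recurrence of $\MDGPI$ (Definition~\ref{def:MDGPI}).

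Since $\MDGPI(G) \le K$, the recurrence $\Delta(L^{k}(G)) = 2\Delta(L^{k-1}(G)) - 2$ holds for every $k \ge \MDGPI(G)+1$, and hence in particular for every $k \ge K+1$. This is the only place where the hypothesis $\MDGPI(G)\le K$ is used. It matters only that the recurrence holds for all indices beyond $K$, not that $K$ is the smallest such index.

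\textbf{Base case $k = K$.} The right-hand side equals $(\Delta(L^K(G)) - 2)\cdot 1 + 2 = \Delta(L^K(G))$, so the identity is trivial.

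\textbf{Inductive step.} Assume the formula holds for some $k \ge K$. Since $k+1 \ge K+1$, the recurrence applies and gives
\[
\Delta(L^{k+1}(G)) = 2\Delta(L^k(G)) - 2 = 2\bigl((\Delta(L^K(G))-2)2^{k-K}+2\bigr) - 2 = (\Delta(L^K(G))-2)2^{k+1-K}+2 .
\]
A cleaner way to see the same computation is to substitute $a_k = \Delta(L^k(G)) - 2$. The recurrence then becomes $a_k = 2a_{k-1}$ for $k \ge K+1$, so $a_k = 2^{k-K}a_K$, which is exactly the claim.

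There is no real obstacle here. The one point to check is the index bookkeeping: the recurrence is required only for $k \ge K+1$, and the base case is at $k = K$.
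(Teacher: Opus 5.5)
Your proposal is correct and matches the paper's proof: induction on $k$ from the trivial base case $k=K$, with the inductive step applying the recurrence from Definition~\ref{def:MDGPI}, which holds for every index $k+1 \ge K+1$ because $\MDGPI(G)\le K$. The substitution $a_k=\Delta(L^k(G))-2$ is only a more compact way of writing the same computation.
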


\begin{proof}
    We proceed via induction. This holds trivially for $k=K$. Now, we assume this to hold true for some $k \ge K$. Therefore, $\Delta(L^k(G))=(\Delta(L^K(G))-2)\cdot 2^{k-K}+2$. From Definition~\ref{def:MDGPI} we have,
    \begin{align*}
        \Delta(L^{k+1}(G)) &= 2 \cdot \Delta (L^k(G)) - 2 = 2 \cdot ((\Delta(L^K(G))-2)\cdot 2^{k-K}+2) - 2 \\
        &= (\Delta(L^K(G))-2)\cdot 2^{(k+1)-K}+2
    \end{align*}
    This completes the inductive proof.
\end{proof}

\begin{definition}\label{def:delta-cycle}
    For a graph $G$, $\Delta(G)$ is the highest degree of a graph $G$. A vertex having degree $\Delta(G)$ is called a $\Delta$-vertex, and a vertex having degree $\Delta(G)-1$ is called a $(\Delta - 1)$-vertex. A $\Delta$-cycle of $G$ is a cycle where each node has degree $\Delta(G)$ in $G$. A $\Delta$-triangle is a $\Delta$-cycle of length $3$.
\end{definition}

Their work deduce further propoerties of $\MDGPI$.

\begin{theorem} \label{MDGPIDeltaCycle}
    If for a graph $G$, the graph $L^k(G)$ contains a $\Delta$-cycle, then $\MDGPI(G)\leq k$.
\end{theorem}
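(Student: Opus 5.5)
The strategy is to prove the one-step inequality $\Delta(L^{j+1}(G)) \ge 2\Delta(L^j(G)) - 2$ together with the matching upper bound, and to show that a $\Delta$-cycle in $L^j(G)$ yields a $\Delta$-cycle in $L^{j+1}(G)$. Induction then shows that MDGP holds at every level $j \ge k$, which by Definition~\ref{def:MDGPI} gives $\MDGPI(G)\le k$.

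\textbf{Upper bound.} By Proposition~\ref{Deguv}, every vertex of $L(H)$ corresponding to an edge $uv$ has degree $d_H(u)+d_H(v)-2 \le 2\Delta(H)-2$. Hence $\Delta(L(H)) \le 2\Delta(H)-2$ for every graph $H$.

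\textbf{Lower bound and propagation.} Let $C = v_1 v_2 \cdots v_t v_1$ be a $\Delta$-cycle in $H = L^j(G)$, with $t \ge 3$.
\begin{itemize}
\item Each edge $v_iv_{i+1}$ joins two $\Delta$-vertices. By Proposition~\ref{Deguv}, its corresponding vertex $x_i \in V(L(H))$ has degree exactly $2\Delta(H)-2$. With the upper bound, $\Delta(L(H)) = 2\Delta(H)-2$, so MDGP holds for $H$.
\item Consecutive edges $v_{i-1}v_i$ and $v_iv_{i+1}$ share the vertex $v_i$, so $x_{i-1}x_i \in E(L(H))$.
\item The cycle edges are distinct because $t \ge 3$. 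Hence $x_1,\ldots,x_t$ are distinct and form a cycle of length $t$ in $L(H)$.
\end{itemize}
Every $x_i$ has degree $\Delta(L(H))$, so this is a $\Delta$-cycle of $L(H) = L^{j+1}(G)$. Equivalently, by Proposition~\ref{subindsub}, $L(C)\cong C_t$ is a subgraph of $L(H)$ whose vertices all have maximum degree.

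\textbf{Induction.} Starting from the hypothesised $\Delta$-cycle in $L^k(G)$, the propagation step gives a $\Delta$-cycle in $L^j(G)$ for every $j \ge k$. The degree computation applied at each such level gives $\Delta(L^{j+1}(G)) = 2\Delta(L^j(G)) - 2$ for all $j \ge k$, that is, the identity of Definition~\ref{def:MDGPI} holds for every index $\ge k+1$. Since $\MDGPI(G)$ is the least $K$ with this property, $\MDGPI(G) \le k$.

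\textbf{Main difficulty.} The argument is short, and the only point needing care is that the image cycle really is a $\Delta$-cycle. This has two parts:
\begin{itemize}
\item its vertices attain the new maximum degree, which needs the upper bound;
\item it remains a genuine cycle, which needs distinct edges and $t\ge 3$.
\end{itemize}
Both points are settled as above.
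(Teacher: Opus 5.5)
Your proof is correct: Proposition~\ref{Deguv} gives both $\Delta(L(H))\le 2\Delta(H)-2$ and equality at the images of the cycle edges, and those images form a $\Delta$-cycle of the same length in $L(H)$, so induction from level $k$ yields the identity of Definition~\ref{def:MDGPI} at every index $\ge k+1$. The paper gives no proof of its own (it quotes the statement from Hartke and Higgins~\cite{stephen1999growth}), but your argument is the standard one and matches the propagation mechanism the paper uses for triangles in Lemma~\ref{lem:DeltaOrMoreTriangle}.
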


Putting these together, we get the following observation.

\begin{observation}\label{DirectFormula}
    Let $L^k(G)$ contain a $\Delta$-triangle with $\Delta(L^k(G))=x$. Then, $\Delta(L^r(G))=(x-2)\cdot2^{r-k}+2$ for all $r \geq k$.
\end{observation}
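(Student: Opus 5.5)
This follows directly from Theorem~\ref{MDGPIDeltaCycle} and Observation~\ref{Formula}; the only real point is to check that the triangle hypothesis gives the cycle hypothesis. Since $L^k(G)$ contains a $\Delta$-triangle, and a $\Delta$-triangle is by Definition~\ref{def:delta-cycle} a $\Delta$-cycle of length $3$, $L^k(G)$ contains a $\Delta$-cycle. Theorem~\ref{MDGPIDeltaCycle} then gives $\MDGPI(G)\leq k$.

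Next I would apply Observation~\ref{Formula} with $K=k$, which is allowed because $\MDGPI(G)\le k$. It gives, for every $r\ge k$,
\[
\Delta(L^r(G)) = (\Delta(L^k(G))-2)\cdot 2^{r-k}+2 .
\]
Substituting $\Delta(L^k(G))=x$ yields exactly $\Delta(L^r(G))=(x-2)\cdot 2^{r-k}+2$ for all $r\ge k$.

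There is essentially no obstacle. The one thing to check is that Observation~\ref{Formula} only needs $\MDGPI(G)\le K$, not equality. This holds because Definition~\ref{def:MDGPI} makes the doubling recurrence valid for all $k\ge \MDGPI(G)+1$, hence in particular for all indices $\ge K+1$. So the induction in Observation~\ref{Formula} goes through starting at $K=k$, with the base case $r=k$ trivial.
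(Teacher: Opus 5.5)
Your proposal is correct and follows the paper's own proof exactly: a $\Delta$-triangle is a $\Delta$-cycle, so Theorem~\ref{MDGPIDeltaCycle} gives $\MDGPI(G)\le k$, and Observation~\ref{Formula} with $K=k$ and $\Delta(L^k(G))=x$ gives the formula. Your remark that Observation~\ref{Formula} needs only $\MDGPI(G)\le K$ rather than equality is a correct detail, and the paper's statement of that observation already uses the inequality.
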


\begin{proof}
    As $L^k(G)$ has a $\Delta$-triangle, $\MDGPI(G)\leq k$ due to Theorem~\ref{MDGPIDeltaCycle}. Hence, for any $r \geq k$, $\Delta(L^r(G))=(x-2)\cdot2^{r-k}+2$ by Observation~\ref{Formula}.
\end{proof}

Using the above observation, we get the exact value of $\Delta(L^r(G))$ if $L^k(G)$ has a $\Delta$-triangle for $k\leq r$. We now provide the following result that gives a lower bound on $\Delta(L^r(G))$ if $L^k(G)$ has a triangle whose vertices satisfy a lower bound on their degree for $k\leq r$.

\begin{lemma}\label{lem:DeltaOrMoreTriangle}
    Let $L^k(G)$ contain a triangle with vertices $a,b,c\in V(L^k(G))$. Let for some $d\in\mathbb N$, $d_{L^k(G)}(a),d_{L^k(G)}(b),d_{L^k(G)}(c)\geq d$. Then, for all $r\geq k$, $L^r(G)$ has a triangle with vertices $a',b',c'\in V(L^r(G))$ such that $d_{L^r(G)}(a'),d_{L^r(G)}(b'),d_{L^r(G)}(c')\geq (d-2)\cdot2^{r-k}+2$. Consequently, $\Delta(L^r(G))\geq(d-2)\cdot2^{r-k}+2$ for all $r \geq k$.
\end{lemma}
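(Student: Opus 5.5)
We argue by induction on $r$; the base case $r=k$ is the hypothesis itself. Since the bound $(d-2)\cdot 2^{r-k}+2$ doubles its excess over $2$ at each step, it suffices to prove a one-step statement and apply it repeatedly. \textbf{One-step claim:} if a graph $H$ contains a triangle $a,b,c$ with $d_H(a),d_H(b),d_H(c)\geq d$, then $L(H)$ contains a triangle whose three vertices all have degree at least $2d-2$. Applying this to $H=L^{r-1}(G)$ with the inductive degree bound $d'=(d-2)\cdot2^{r-1-k}+2$ gives new degrees at least $2d'-2=(d-2)\cdot 2^{r-k}+2$, which is exactly the required bound at level $r$.

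To prove the one-step claim, take the three edges $ab$, $bc$, $ca$ of the triangle in $H$, and let $a',b',c'$ be the corresponding vertices of $L(H)$. Any two of these edges share an endpoint ($ab$ and $bc$ share $b$, and so on), so $a',b',c'$ are pairwise adjacent in $L(H)$ and form a triangle. By Proposition~\ref{Deguv}, the vertex corresponding to $ab$ has degree $d_H(a)+d_H(b)-2\geq 2d-2$, and likewise for the other two. This proves the claim.

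The ``consequently'' part then follows at once, since $\Delta(L^r(G))$ is at least the degree of any vertex of $L^r(G)$, in particular of $a'$.

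No step is a genuine obstacle. The only points that need care are the bookkeeping identity $2\bigl((d-2)2^{j}+2\bigr)-2=(d-2)2^{j+1}+2$ and the fact that the three edges of a triangle are distinct, so they give three distinct vertices of $L(H)$.
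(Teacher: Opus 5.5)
Your proof is correct and is essentially the paper's argument: induct on $r$, take the three edges of the current triangle as a new triangle in the next line graph, and use Proposition~\ref{Deguv} to get degrees at least $2d'-2=(d-2)\cdot 2^{r-k}+2$. You also state the degree sum correctly in terms of the current graph $H=L^{r-1}(G)$, whereas the paper's write-up has a typo and writes $d_G$ there.
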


\begin{proof}
    We proceed via induction. For $r=k$, this holds true. Assume the claim to hold true for some for some $r\geq k$. So, $L^r(G)$ has a triangle with vertices $a',b',c'\in V(L^r(G))$ such that $d_{L^r(G)}(a'),d_{L^r(G)}(b'),d_{L^r(G)}(c')\geq (d-2)\cdot2^{r-k}+2$. For the edge $a'b'\in E(L^r(G))$ (resp. $b'c'$ and $c'a'$), the degree of vertex $v_1$ (resp. $v_2$ and $v_3$) corresponding to it in $L^{r+1}(G)$ is $d_{L^{r+1}(G)}(v_1)=d_G(a')+d_G(b')-2\geq (d-2)\cdot2^{r-k+1}+2$. Also, vertices $v_1$, $v_2$ and $v_3$ form a triangle in $L^{r+1}(G)$ because their corresponding edges in $G$ pairwise share an endpoint with each other. Hence, we are done. 
\end{proof}

Recall that Hartke and Higgins~\cite{stephen1999growth} show that $\MDGPI$ exists and is finite of any connected graph $G$. By Observation~\ref{Formula}, we get that there exists a constant $k_0 = \MDGPI(G) + 1$, such that for all $k \ge k_0$, $\Delta(L^k(G)) = (\Delta(L ^ {k_0}(G)) - 2) \cdot 2^{k-k_0} + 2$. Therefore, for every graph $G$, there exists a constant $c$, such that asymptotic growth of $
\Delta (L^k(G))$ is given by $c \cdot 2^{k} + 2$, in particular where $c = {(\Delta(L ^ {k_0}(G)) - 2)} \cdot {2^{-k_0}}$. We would denote sixteen times this constant as the \emph{degree growth constant} ($\dgc(G)$). The choice of the factor of sixteen is arbitrary from a mathematical standpoint, and is purely chosen for a better intuition, which would be evident later in the paper. We state this as a proposition. 

\begin{proposition}\label{prop:dgc}
    For every connected graph $G$ there exists a unique rational number $c'$, such that for all integers $k \ge \MDGPI(G) + 1$, we have $\Delta(L^k(G)) = c' \cdot 2^{k - 4}  + 2$. We call this constant $c'$ the degree growth constant, and denote it by $\dgc(G)$.
\end{proposition}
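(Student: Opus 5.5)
Existence follows from Observation~\ref{Formula}, and uniqueness from comparing two candidate constants at a single large index. For existence, set $K = \MDGPI(G)$, which is finite by the result of Hartke and Higgins~\cite{stephen1999growth} quoted in the paper. Observation~\ref{Formula} then gives, for all $k \ge K$,
\[
\Delta(L^k(G)) = (\Delta(L^K(G)) - 2)\cdot 2^{k-K} + 2 .
\]
I rewrite $2^{k-K} = 2^{k-4}\cdot 2^{4-K}$ and define
\[
c' = (\Delta(L^K(G)) - 2)\cdot 2^{4-K}.
\]
This is an integer times a (possibly negative) power of two, hence rational. It depends only on $G$, since $K$ and $\Delta(L^K(G))$ are determined by $G$. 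The identity $\Delta(L^k(G)) = c'\cdot 2^{k-4}+2$ then holds for all $k \ge K$, and in particular for all $k \ge \MDGPI(G)+1$, as the statement requires.

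One should check that the normalisation does not depend on the base index. If we instead use $K' \ge K$ as base, Observation~\ref{Formula} gives $\Delta(L^{K'}(G)) - 2 = (\Delta(L^K(G))-2)\cdot 2^{K'-K}$. Therefore $(\Delta(L^{K'}(G))-2)\cdot 2^{4-K'} = (\Delta(L^K(G))-2)\cdot 2^{4-K}$, so the constant is the same. This also matches the informal description before the proposition, where the base is taken as $k_0 = \MDGPI(G)+1$.

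For uniqueness, suppose $c'$ and $c''$ both satisfy the identity for every $k \ge \MDGPI(G)+1$. Taking $k = \MDGPI(G)+1$ gives $c'\cdot 2^{k-4} = \Delta(L^k(G)) - 2 = c''\cdot 2^{k-4}$. Since $2^{k-4} \ne 0$, we get $c' = c''$.

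There is no real obstacle here. The only point needing care is the scope of the statement. It should be read for prolific graphs, or more generally for graphs where $\MDGPI$ is defined. For paths, $\MDGPI$ may fail to exist: $\Delta$ stays at $2$, and the recurrence $\Delta = 2\Delta - 2$ only holds at the fixed point $2$, which would force $c' = 0$. I would note that for cycles and claws $\Delta(L^k(G)) = 2$ eventually, so $\dgc(G) = 0$ there. Paths, whose iterated line graphs shrink to the empty graph, are the degenerate exception. With that remark, the proposition follows directly from Observation~\ref{Formula} and the existence of $\MDGPI$.
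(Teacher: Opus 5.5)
Your proposal is correct and is essentially the paper's argument. The paper likewise takes $\MDGPI(G)$ to be finite by Hartke and Higgins, applies Observation~\ref{Formula}, and reads off $\dgc(G) = 16\,(\Delta(L^{k_0}(G))-2)\,2^{-k_0}$ with $k_0=\MDGPI(G)+1$, which equals your $(\Delta(L^{K}(G))-2)\,2^{4-K}$ by your base-independence computation. Your explicit uniqueness step and your caveat that paths must be excluded are details the paper leaves implicit; the introduction states the Hartke--Higgins result only for connected non-path graphs, since the iterated line graphs of a path shrink to the null graph rather than keeping $\Delta = 2$.
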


With the definition of $\dgc(G)$ laid out, we restrict our focus to analyzing the exact values that $\dgc(G)$ can attain for $G \in \mathcal{G}$. We start by defining two binary relations among graphs based on the maximum degrees of their higher order line graphs.

\begin{definition}
    Consider two graphs $G_1, G_2\in \mathcal G$. We say $G_1\leq_k G_2$ for some non-negative integer $k$, if $\Delta(L^r(G_1))\leq \Delta(L^r(G_2))$ for all $r\geq k$.
\end{definition}

This binary relation is related to the degree growth constant as follows.

\begin{observation}\label{obs:r1todgc}
    If for two graphs $G_1$ and $G_2$, we have $G_1\leq_k G_2$ for some non-negative integer $k$, then $\dgc(G_1)\leq \dgc(G_2)$.
\end{observation}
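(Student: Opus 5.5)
The plan is to unwind the definition of $\dgc$ from Proposition~\ref{prop:dgc} and compare the two closed forms at a single large index, using nothing beyond the definition of $\leq_k$.

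By Proposition~\ref{prop:dgc}, for $i \in \{1,2\}$ there is an integer $K_i = \MDGPI(G_i)+1$ such that for all $r \ge K_i$,
\[
\Delta(L^r(G_i)) = \dgc(G_i)\cdot 2^{r-4} + 2 .
\]
Let $r_0 = \max(k, K_1, K_2)$ and take any $r \ge r_0$. Since $r \ge k$, the definition of $G_1 \leq_k G_2$ gives $\Delta(L^r(G_1)) \le \Delta(L^r(G_2))$. Since $r \ge K_1$ and $r \ge K_2$, both closed forms apply at this index. Substituting them yields
\[
\dgc(G_1)\cdot 2^{r-4} + 2 \le \dgc(G_2)\cdot 2^{r-4} + 2 .
\]
Subtracting $2$ and dividing by $2^{r-4} > 0$ gives $\dgc(G_1) \le \dgc(G_2)$. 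Taking the single value $r = r_0$ already suffices, so no limiting argument is needed.

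The only real point to check is that the index at which the formula of Proposition~\ref{prop:dgc} becomes valid may differ between $G_1$ and $G_2$, and may be larger than $k$. Taking the maximum of the three thresholds handles this. The finiteness of $\MDGPI(G_i)$ is guaranteed for prolific graphs by Hartke and Higgins, as recalled just before Proposition~\ref{prop:dgc}.
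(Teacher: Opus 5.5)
Your proof is correct. The paper states this as an observation without proof, and your argument is the immediate one it leaves implicit: choose any $r \ge \max(k, \MDGPI(G_1)+1, \MDGPI(G_2)+1)$, substitute the closed forms from Proposition~\ref{prop:dgc} into $\Delta(L^r(G_1)) \le \Delta(L^r(G_2))$, and divide by $2^{r-4}$.
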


\begin{definition}
    Consider two graphs $G_1, G_2\in \mathcal G$. Given two non-negative integers $a,b$ with $a \leq b$, we say $G_1\leq_{a,b}G_2$ if $L^a(G_1)\subseteq L^b(G_2)$, i.e. $L^a(G_1)$ is a subgraph of $L^b(G_2)$.
\end{definition}

The binary relations $\leq_{x,y}$ and $\leq_{x}$ can be shown to be related as follows.

\begin{lemma}\label{Comp1}
    If $G_1\leq_{x,y}G_2$, then $G_1\leq_xG_2$.
\end{lemma}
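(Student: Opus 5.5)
The plan is to push the subgraph relation $L^x(G_1)\subseteq L^y(G_2)$ forward through further line graph operations, and then read off maximum degrees. Subgraph containment never decreases the maximum degree, so the whole argument rests on keeping a subgraph relation at every level $r\ge x$.

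\textbf{Step 1 (propagation).} By induction on $j\ge 0$ I will show
\[
L^{x+j}(G_1)\subseteq L^{y+j}(G_2).
\]
The case $j=0$ is the hypothesis $G_1\leq_{x,y}G_2$. For the inductive step, suppose $H=L^{x+j}(G_1)$ is a subgraph of $G=L^{y+j}(G_2)$. Proposition~\ref{subindsub} then gives that $L(H)$ is an (induced) subgraph of $L(G)$, that is, $L^{x+j+1}(G_1)\subseteq L^{y+j+1}(G_2)$. One detail needs care. The paper defines a subgraph through a map $\phi$ that sends edges to edges, and an edge may be the image of several edges if $\phi$ is not injective. So I will check that in Proposition~\ref{subindsub} the embedding is injective on vertices, hence on edges, so that it induces an injective edge map and therefore a vertex embedding $L(H)\to L(G)$.

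\textbf{Step 2 (degrees).} If $H\subseteq G$ via an injective $\phi$, then $d_H(v)\le d_G(\phi(v))$ for every vertex $v$, so $\Delta(H)\le\Delta(G)$. Writing $r=x+j$, Step 1 gives
\[
\Delta(L^r(G_1))\le \Delta(L^{r+(y-x)}(G_2))\qquad\text{for all } r\ge x.
\]

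\textbf{Step 3 (aligning the indices).} This step is the main obstacle. When $x=y$, Step 2 is exactly $\Delta(L^r(G_1))\le\Delta(L^r(G_2))$ for all $r\ge x$, which is the definition of $G_1\leq_x G_2$, and the proof is complete.

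When $x<y$, Step 2 compares $G_1$ at level $r$ with $G_2$ at the higher level $r+(y-x)$. For prolific $G_2$, Proposition~\ref{prop:dgc} shows that $\Delta(L^k(G_2))$ eventually grows with $k$. So the bound of Step 2 does not by itself yield the same-level inequality required by $\leq_x$. In fact the same-level statement cannot follow from the hypothesis alone. A prolific $G_2$ can have $L^y(G_2)$ with maximum degree larger than $\Delta(L^x(G_2))$, and then taking $G_1=L^y(G_2)$ with $x=0$ violates the conclusion already at $r=x$.

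Because of this, I would not try to force Step 3 for $x<y$. Instead I would check how Lemma~\ref{Comp1} is used later in the paper, namely only through Observation~\ref{obs:r1todgc} to compare degree growth constants. For that purpose the shifted bound of Step 2 is the correct output, and it should be carried through explicitly. Combining it with Observation~\ref{Formula} for both graphs gives
\[
\dgc(G_1)\,2^{r-4}\le \dgc(G_2)\,2^{r+y-x-4}+O(1),
\]
so $\dgc(G_1)\le 2^{\,y-x}\dgc(G_2)$, which reduces to $\dgc(G_1)\le\dgc(G_2)$ when $x=y$.

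In summary, I can prove Lemma~\ref{Comp1} as stated only when $x=y$. When $x<y$, Steps 1 and 2 give the shifted inequality and the factor-$2^{y-x}$ bound on $\dgc$, which is the form I expect the later comparisons to need.
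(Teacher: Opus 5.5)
You are right that, with the definition of $\le_{a,b}$ read literally ($a\le b$), the lemma fails for $x<y$, and your counterexample $G_1=L^y(G_2)$, $x=0$ is valid. But that condition is a slip in the paper. Its proof of Lemma~\ref{Comp1} starts from ``$L^x(G_1)\subseteq L^y(G_2)$ and $x\geq y$'', and the relation is applied in that direction, e.g.\ $G_1\le_{1,0}G$ in Lemma~\ref{G1M} (from $L(G_1)=CP_4\subseteq G$). The intended content of the lemma is therefore the case $x>y$, which your proposal never treats. There, your Steps~1--2 give $\Delta(L^r(G_1))\le\Delta(L^{r-(x-y)}(G_2))$. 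This compares $G_1$ with $G_2$ at a \emph{lower} level, and nothing in your argument lifts the bound to level $r$. So as written you have proved only the case $x=y$.

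The missing idea is that the maximum degree does not decrease along the line-graph sequence away from stars. Let $H$ be connected and not a star, and let $v$ be a $\Delta$-vertex. Then $v$ has a neighbour $u$ with $d_H(u)\ge 2$. By Proposition~\ref{Deguv}, the vertex of $L(H)$ corresponding to $uv$ has degree $d_H(v)+d_H(u)-2\ge\Delta(H)$, so $\Delta(L(H))\ge\Delta(H)$. For $j\ge 1$, $L^j(G_2)$ is claw-free with maximum degree at least $3$, hence not a star. Iterating gives $\Delta(L^{r-(x-y)}(G_2))\le\Delta(L^r(G_2))$, and together with your Steps~1--2 this is the paper's argument.

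The paper does not single out level $j=0$, which arises only when $y=0$ and $r=x$. That case is harmless: if $G_2$ is then a star, $L^x(G_1)$ is a claw-free subgraph of a star, so its maximum degree is at most $2<\Delta(L^x(G_2))$.

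Your insistence on an injective $\phi$ is well placed. The paper's definition of subgraph omits injectivity, and the lemma fails for mere homomorphisms. Your bound $\dgc(G_1)\le 2^{y-x}\dgc(G_2)$ in fact holds for all $x,y$, so for $x\ge y$ it already yields Observation~\ref{obs:r2todgc}. However, it does not give the pointwise inequality for every $r\ge x$ that $G_1\le_x G_2$ asserts.
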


\begin{proof}
    Since $G_1\leq_{x,y}G_2$, we have that $L^x(G_1)\subseteq L^y(G_2)$ and $x\geq y$. Now, by Proposition~\ref{subindsub}, we have $L^{x+i}(G_1)\subseteq L^{y+i}(G_2)$ for all $i\ge 0$. Let $k \ge x$, then $L^k(G_1)\subseteq L^{y+k-x}(G_2)$. Note that $k+y-x\leq k$. We now show that $\Delta(L(G))\geq \Delta(G)$ whenever $G$ is not a star graph. Consider a $\Delta$-vertex $v\in V(G)$. Note that all neighbours of $v$ cannot have degrees equal to $1$ as otherwise $G$ will be a star graph. Therefore, $\exists u\in N_G(v)$ such that $d_G(u)\geq 2$. Let the vertex corresponding to edge $uv$ in $L(G)$ be $v'$. Then, by Proposition~\ref{Deguv}, $\Delta(L(G))\geq d_G(v')=d_G(v)+d_G(u)-2\geq \Delta(G)$.
    
    Now, higher order line graphs cannot be isomorphic to star graphs as they are claw-free. Thus, we have that $\Delta(L^k(G_1))\leq\Delta(L^{y+k-x}(G_2))\leq\Delta(L^k(G_2))$, which in other words means that $G_1\leq_xG_2$.
\end{proof}

From Observation~\ref{obs:r1todgc} and Lemma~\ref{Comp1}, we have the following.

\begin{observation}\label{obs:r2todgc}
    If for two graphs $G_1$ and $G_2$, we have $G_1\leq_{x,y}G_2$ for some non-negative integers $a\le b$, then $\dgc(G_1)\leq\dgc(G_2)$.
\end{observation}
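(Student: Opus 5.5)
The plan is to chain Lemma~\ref{Comp1} with Observation~\ref{obs:r1todgc} and then check that the resulting inequality between maximum degrees forces the inequality between the degree growth constants. Concretely, suppose $G_1\leq_{x,y}G_2$, i.e.\ $L^x(G_1)\subseteq L^y(G_2)$ with the indices in the order required by the definition. By Lemma~\ref{Comp1} we obtain $G_1\leq_x G_2$, that is, $\Delta(L^r(G_1))\leq\Delta(L^r(G_2))$ for every $r\geq x$. Observation~\ref{obs:r1todgc} then gives $\dgc(G_1)\leq\dgc(G_2)$ directly. The statement is thus a two-step composition and needs no new argument beyond what Lemma~\ref{Comp1} already supplies.

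For completeness I would also re-derive Observation~\ref{obs:r1todgc} from Proposition~\ref{prop:dgc}, so that the chain is self-contained. Take $r\geq\max\{x,\MDGPI(G_1)+1,\MDGPI(G_2)+1\}$. Then
\[\dgc(G_i)\cdot 2^{r-4}+2=\Delta(L^r(G_i))\quad\text{for } i\in\{1,2\},\]
and the inequality $\Delta(L^r(G_1))\leq\Delta(L^r(G_2))$ becomes $\dgc(G_1)\,2^{r-4}\leq\dgc(G_2)\,2^{r-4}$. Dividing by $2^{r-4}>0$ finishes the argument. Here the existence of the finite $\MDGPI$ for prolific graphs, from Hartke and Higgins, is what makes such an $r$ available.

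The only point needing care is the index bookkeeping. The statement quantifies over ``$a\le b$'' while the relation is written $\leq_{x,y}$, and the proof of Lemma~\ref{Comp1} actually uses $y\le x$, shifting by $k+y-x\le k$ and invoking monotonicity of $\Delta$ along iterated line graphs of non-star graphs. I would read the hypothesis as $L^x(G_1)\subseteq L^y(G_2)$ with $y\leq x$ and note that this is exactly the setting in which Lemma~\ref{Comp1} applies. Once that is fixed, no genuine obstacle remains.
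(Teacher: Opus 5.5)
Your proposal is correct and is exactly the paper's argument: the paper obtains this observation by composing Lemma~\ref{Comp1} with Observation~\ref{obs:r1todgc}. You are also right to read the hypothesis as $L^x(G_1)\subseteq L^y(G_2)$ with $y\le x$, as both the proof of Lemma~\ref{Comp1} and the later use $G_1\le_{1,0}G$ require; under the literal ``$a\le b$'' the claim is false, since $L(G)\le_{0,1}G$ holds for any prolific $G$ while $\dgc(L(G))=2\,\dgc(G)$.
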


With the definitions and the relevant preliminary results ready we establish a few more results which would finally help us in analyzing the set $\{\dgc(G) \mid G \in \mathcal G\}$.

\begin{lemma}\label{LongDeltaPath}
    Let $G$ be a graph satisfying the following:
    \begin{enumerate}
        \item The $\Delta$-vertices of $G$ induce a path $v_1, v_2, \ldots, v_l$ in $G$ of length $l-1$.
        \item There are at least three distinct edges in $G$ that connect a $\Delta$-vertex to a $(\Delta-1)$-vertex.
    \end{enumerate}
    Then, $L^l(G)$ contains a $\Delta$-triangle and $\Delta(L^l(G))=(\Delta(G)-2)\cdot 2^{l}+1$.
\end{lemma}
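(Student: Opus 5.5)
The plan is induction on $l$, the number of $\Delta$-vertices. Each application of the line graph operation should shorten the induced $\Delta$-path by one vertex while preserving both hypotheses, with the maximum degree updating as $\Delta \mapsto 2\Delta-2$. When only one $\Delta$-vertex remains, one more step produces the $\Delta$-triangle. Throughout I use Proposition~\ref{Deguv}: an edge $xy$ of $G$ becomes a vertex of degree $d_G(x)+d_G(y)-2$ in $L(G)$. I also assume $\Delta(G)\ge 3$, which holds for prolific graphs, so that $\Delta$- and $(\Delta-1)$-vertices are distinct and have positive degree.

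\emph{Base case $l=1$.} Let $v_1$ be the unique $\Delta$-vertex, and let $v_1u_1, v_1u_2, v_1u_3$ be three edges to $(\Delta-1)$-vertices. Condition 2 forces all three to be incident on $v_1$, since $v_1$ is the only $\Delta$-vertex. In $L(G)$ these three edges become vertices of degree $2\Delta-3$, pairwise adjacent because they share $v_1$. Any other edge has at most one endpoint of degree $\Delta$, and if it has one, its other endpoint has degree at most $\Delta-1$. So every vertex of $L(G)$ has degree at most $2\Delta-3$. Hence $\Delta(L(G))=2\Delta-3=(\Delta-2)\cdot 2+1$, and the three vertices form a $\Delta$-triangle.

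\emph{Inductive step, $l\ge 2$.} Set $G'=L(G)$. An edge of $G$ has image of degree $2\Delta-2$ if and only if both endpoints are $\Delta$-vertices. Because the path $v_1,\dots,v_l$ is induced, these are exactly the path edges $v_iv_{i+1}$, so $\Delta(G')=2\Delta-2$. Two such path edges share an endpoint only if they are consecutive, so the $\Delta(G')$-vertices of $G'$ induce a path with $l-1$ vertices; this gives condition 1. The $(\Delta(G')-1)$-vertices of $G'$, of degree $2\Delta-3$, are exactly the images of edges joining a $\Delta$-vertex to a $(\Delta-1)$-vertex. There are at least three such edges, $v_{i_j}w_j$ for $j=1,2,3$. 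Since $l\ge 2$, each $v_{i_j}$ is incident on at least one path edge, and the image of that path edge is adjacent in $G'$ to the image of $v_{i_j}w_j$. This gives three edges of $G'$ joining a $\Delta(G')$-vertex to a $(\Delta(G')-1)$-vertex. They are distinct because their $(\Delta(G')-1)$-endpoints are the distinct images of the three edges $v_{i_j}w_j$. So condition 2 holds for $G'$, and $G'$ is still not a path or claw.

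Applying the induction hypothesis to $G'$ with $l-1$, the graph $L^{l-1}(G')=L^l(G)$ contains a $\Delta$-triangle and
\[
\Delta(L^l(G)) = (2\Delta-2-2)\cdot 2^{l-1}+1 = (\Delta-2)\cdot 2^{l}+1,
\]
which is the claimed value. The only delicate step is verifying condition 2 after each iteration, namely the distinctness argument and the existence of a path edge at each $v_{i_j}$; this is exactly where $l\ge 2$ is used, which is why $l=1$ is the base case. Proving that the $\Delta(G')$-vertices induce a path is routine once the path in $G$ is induced.
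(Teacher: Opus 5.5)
Your proof is correct and is essentially the paper's argument: the paper proves by induction on $k \in \{0,\dots,l-1\}$ that $L^k(G)$ has maximum degree $(\Delta(G)-2)\cdot 2^k+2$, that its $\Delta$-vertices induce a path on $l-k$ vertices, and that it has at least three $\Delta$--$(\Delta-1)$ edges, and then takes one final line-graph step from the single remaining $\Delta$-vertex, which is exactly your induction on $l$ unrolled. Your write-up is slightly more careful than the paper's, since you explicitly check that the three new edges are distinct and, in the base case, that no other vertex of $L(G)$ exceeds degree $2\Delta-3$.
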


\begin{proof}     We start by proving the following claim.
    
    \begin{claim} 
        For all $0\leq k\leq l-1$, the following holds true:
        \begin{itemize}
            \item $\Delta(L^k(G))=(\Delta(G)-2)\cdot2^{k}+2$,
            \item the $\Delta$-vertices of $L^k(G)$ induce a path of length $l-k-1$,
            \item there exists at least three edges in $L^k(G)$ that connect a $\Delta$-vertex to a $(\Delta-1)$-vertex.
        \end{itemize} 
    \end{claim}
    \begin{claimproof}
        We proceed via induction. For the base case of $k=0$, this is true by assumption. Let the claim hold true for some $k - 1$, where $1 \leq k\leq l-1$. Let $H = L^{k-1}(G)$, then $\Delta(H) = (\Delta(G) - 2) \cdot2^{k-1} + 2$. Moreover, the $\Delta$-vertices of $H$ induce a path of length $l-k$ and there are at least three distinct edges in $H$ that connect a $\Delta$-vertex to a $(\Delta-1)$-vertex. By Proposition~\ref{Deguv}, all $\Delta$-vertices in $L(H)$ would correspond to an edge in $H$ connecting two $\Delta$-vertices in $H$; in particular, the $\Delta$-vertices of $L(H)$ would correspond to the $l-k$ edges of the path induced by the $\Delta$-vertices in $H$. So, the $\Delta$-vertices of $L(H)$ induce a path of length $l-k-1$ in $L(H)$, each having degree $\Delta(L(H)) = 2 \cdot \Delta(H) - 2 = (\Delta(G) - 2) \cdot 2^k + 2$. 
        
        Next, observe that the $(\Delta-1)$-vertices of $L(H)$ correspond to edges in $H$ which connect $\Delta$-vertices and $(\Delta-1)$-vertices. There are at least three such edges in $H$. Let three such edges be $u_1v_1$, $u_2v_2$ and $u_3v_3$, where $d_H(u_1) = d_H(u_2) = d_H(u_3) = \Delta(H) - 1$ and $d_H(v_1) = d_H(v_2) = d_H(v_3) = \Delta(H)$. Since all $\Delta$-vertices of $H$ induce a path, there is at least one $\Delta$-vertex neighbour of $v_1$, $v_2$ and $v_3$ each; Let these be $a$, $b$ and $c$ respectively. So, the $\Delta$-vertices of $L(H)$ corresponding to the edges $av_1, bv_2$ and $cv_3$ of $H$ will be adjacent to $(\Delta-1)$-vertices of $L(H)$ corresponding to edges $u_1v_1, u_2v_2$ and $u_3v_3$ of $H$ respectively. Therefore, $L(H)$ has at least three edges connecting $\Delta$-vertices and $(\Delta-1)$-vertices.
    \end{claimproof}

    For $k = l-1$, this claim says that $L^{l-1}(G)$ has exactly one $\Delta$-vertex of degree $(\Delta-2)\cdot 2^{l-1}+2$ and it is adjacent to three distinct $(\Delta-1)$-vertices. Hence, these three edges correspond to $\Delta$-vertices in $L^l(G)$. Moreover, these $\Delta$-vertices in $L^l(G)$ have degree $\Delta(L^l(G))=2\cdot\Delta(L^{l-1}(G))-3=(\Delta(G)-2)\cdot 2^{l}+1$ and induce a $\Delta$-triangle.
\end{proof}

The next four lemmata establish lower bounds based on local structures of the graph. 

\begin{lemma}\label{CycPen}
   For every graph $G\in\mathcal G$ satisfying $C_{n-1}\subseteq G$ where $C_n$ is a cycle of $n \ge 3$ vertices, we have $\Delta(L^k(G)) \ge 6\cdot2^{k-4}+2$ for all $k\geq3$, i.e. $\dgc(G) \ge 6$.
\end{lemma}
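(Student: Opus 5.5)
The plan is to reduce to a small local configuration and then apply Lemma~\ref{lem:DeltaOrMoreTriangle}. I read the hypothesis as saying that $G$ contains a cycle, of length $m \ge 3$ say. Since $G \in \mathcal G$ is connected and is not itself a cycle, some vertex $v$ of that cycle has a neighbour off the cycle, or a chord. In either case $G$ contains a "tadpole" subgraph $T$: a cycle $C_m$ through $v$ together with one further edge $e_p$ at $v$. I will work only with $T$ and transfer the result to $G$. By Proposition~\ref{subindsub}, $L^k(T)$ is an (induced) subgraph of $L^k(G)$ for every $k$. Hence any triangle in $L^k(T)$ remains a triangle in $L^k(G)$, and the degree of each of its vertices in $L^k(G)$ is at least its degree in $L^k(T)$.

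Next I compute the relevant degrees in $T$ explicitly using Proposition~\ref{Deguv}.
\begin{itemize}
\item In $L(T)$, let $a,b$ be the two cycle edges at $v$ and $p$ the extra edge. Since $d_T(v)=3$, we get $d(a)=d(b)=3$. The vertex $p$ has degree at least $2$, and the remaining cycle edges have degree $2$. Let $c$ be the cycle-neighbour of $a$ other than $b$; it has degree $2$ (if $m=3$, $c$ is the third cycle edge, which is also adjacent to $b$).
\item In $L^2(T)$, the vertex $ab$ has degree $3+3-2=4$. Its neighbours $ap$ and $ac$ have degrees $\ge 3$ and $3+2-2=3$ respectively.
\item In $L^3(T)$, consider the three vertices corresponding to the edges $\{ab,ap\}$, $\{ab,bp\}$ and $\{ab,ac\}$ of $L^2(T)$. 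All three edges are incident to the common vertex $ab$, so these vertices form a triangle. Each has degree at least $4+3-2=5$.
\end{itemize}

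Thus $L^3(G)$ contains a triangle whose vertices all have degree at least $5$. Applying Lemma~\ref{lem:DeltaOrMoreTriangle} with $k=3$ and $d=5$ gives, for all $r\ge 3$,
\[\Delta(L^r(G)) \ge 3\cdot 2^{r-3}+2 = 6\cdot 2^{r-4}+2.\]
Comparing with Proposition~\ref{prop:dgc} for large $r$ yields $\dgc(G)\ge 6$.

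The step needing most care is the case analysis producing the tadpole. One must check that the extra edge $e_p$ (a pendant edge or a chord) and the small case $m=3$ do not collapse the vertices $ap$, $bp$, $ac$ into one another. In each case these are distinct edges of $L^2(T)$ at $ab$, and the degree lower bounds only improve when $p$'s other endpoint has higher degree. The subgraph notion used in the paper is via a map $\phi$. If that causes trouble, I would instead embed $T$ into $G$ via the injective map given by the actual cycle and edge, so that Proposition~\ref{subindsub} applies directly.
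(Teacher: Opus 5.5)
Your proof is correct, and it takes a somewhat different route from the paper's.

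\textbf{The paper's route.} The paper first turns the chord case into a genuine cycle-with-pendant $CP_{n'}$: it shortcuts the cycle through the chord and uses a skipped cycle vertex as the pendant. It then computes $\Delta(L^k(CP_{n'}))$ exactly. The two cycle edges at the attachment vertex are adjacent $\Delta$-vertices of degree $3$ in $L(CP_{n'})$, with at least three $\Delta$--$(\Delta-1)$ edges, so Lemma~\ref{LongDeltaPath} gives a degree-$5$ $\Delta$-triangle in $L^3(CP_{n'})$. Observation~\ref{DirectFormula}, which rests on the Hartke--Higgins $\Delta$-cycle theorem, then yields $\Delta(L^k(CP_{n'}))=6\cdot 2^{k-4}+2$. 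Finally, Lemma~\ref{Comp1} transfers this bound to $G$.

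\textbf{Your route.} You transfer first and compute afterwards. You find the same local configuration inside $T$: the degree-$4$ vertex $ab$ of $L^2(T)$ with three neighbours of degree at least $3$, hence a triangle of degree-$\ge 5$ vertices in $L^3(T)$. You embed this triangle in $L^3(G)$ via Proposition~\ref{subindsub} and apply the elementary Lemma~\ref{lem:DeltaOrMoreTriangle}.

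\textbf{What each approach buys.} Because you only need lower bounds, you handle the pendant and chord cases uniformly. You also bypass both the $\Delta$-cycle theorem and Lemma~\ref{Comp1}, whose proof needs the auxiliary fact that $\Delta(L(H))\ge\Delta(H)$ for non-stars. Hartke--Higgins enters your argument only through the existence of $\dgc$ in Proposition~\ref{prop:dgc}. In exchange, the paper's route gives the exact value $\dgc(CP_{n'})=6$, showing the bound is sharp, though the statement does not require this. You were also right to insist on an injective embedding: under the paper's homomorphism-style definition of $\subseteq$, the hypothesis $C_{n-1}\subseteq G$ would be satisfied by every bipartite graph with an edge, and the lemma would then be false for trees.

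\textbf{One small imprecision.} In the chord case, the two cycle edges at the chord's other endpoint have degree $3$ in $L(T)$. So ``the remaining cycle edges have degree $2$'' and ``$c$ has degree $2$'' should read ``at least $2$''. Since your argument uses only lower bounds, nothing breaks.
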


\begin{proof} 
    Let $CP_n$ be the graph on $n$ vertices, with a cycle of length $n-1$ and one of the vertices in the cycle is adjacent to a single vertex with degree equal to $1$ (Figure~\ref{CP}).
    \begin{figure}[ht!]
    \centering
    \includegraphics[width=0.35\textwidth]{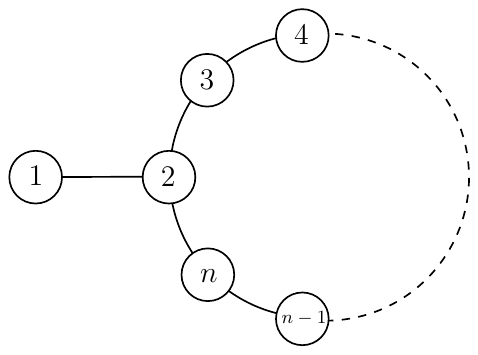}
    \caption{Cycle with a pendant, $CP_n$}
    \label{CP}
    \end{figure}
    We show that all graphs $G\in\mathcal G$ with $C_{n-1}\subseteq G$, we have $CP_n \subseteq G$, i.e. $CP_n\leq_{0,0}G$. Let $G$ contain a cycle $C$ with vertices $v_1,v_2,\ldots,v_{n-1}$ in order. Now, since $G\in\mathcal G$, $\Delta(G)\geq 3$, there must be at least one edge $e \in E(G)$ other than the edges appearing in the cycle $C$. If $e$ is between two vertices of the cycle $C$, say $v_x$ and $v_y$ with $x < y$, then vertices $v_1,v_2,\ldots,v_x,v_y,v_{y+1},\ldots,v_{n-1}$ along with the vertex $v_{x+1}$ form a cycle with pendant, which is isomorphic to $CP_{n + 1 + x - y}$; this implies $CP_{n+1+x-y} \leq_{0,0} G$. Otherwise, if $e$ connects a vertex in $C$ with a vertex $u$ outside $C$, then vertices $v_1,v_2,\ldots,v_{n-1},u$ form a cycle with a pendant, which is isomorphic to $CP_{n}$. So, $CP_n\leq_{0,0} G$. Hence in either case, there exists $n'$, such that $CP_{n'}\leq_0 G$, by Lemma~\ref{Comp1}; this implies $\Delta(L^k(G)) \ge \Delta(L^k(CP_{n'}))$. 

    Now, the graph $L(CP_{n'})$ has two $\Delta$-vertices of degree $3$ which are adjacent to each other; i.e. its $\Delta$-vertices induce a path with $2$ vertices. Moreover, $L(CP_{n'})$ also contains at least three edges connecting a $\Delta$-vertex to a $(\Delta-1)$-vertex of $L(CP_{n'})$. So, $L^3(CP_{n'})$ contains a $\Delta$-triangle with $\Delta(L^3(CP_{n'}))= (\Delta(L(G)) - 2) \cdot 2^{3 - 1} + 1 = 5$ by Lemma~\ref{LongDeltaPath}. Hence, $\Delta(L^k(CP_{n'}))=3\cdot2^{k-3}+2=6\cdot2^{k-4}+2$ for $k\geq3$ by Observation~\ref{DirectFormula}. Therefore $\Delta(L^k(G)) \ge \Delta(L^k(CP_{n'})) \ge 6 \cdot 2^{k - 4} + 2$, for all $k \ge 3$, i.e. $\dgc(G) \ge 6$.\
\end{proof}

\begin{lemma}\label{Deg4Above}
    $G\in\mathcal G$ be a graph with $\Delta(G)\geq4$, then we have $\Delta(L^k(G)) \ge 8\cdot2^{k-4}+2$ for all $k\geq1$, i.e. $\dgc(G) \ge 8$.
\end{lemma}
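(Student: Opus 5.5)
The plan is to find a small fixed subgraph inside $G$, pin down its line graph iterates exactly, and transfer the bound through the subgraph relations. Since $\Delta(G)\ge 4$, $G$ contains a vertex $v$ with four distinct neighbours. Hence the star $K_{1,4}$ is a subgraph of $G$, i.e.\ $K_{1,4}\le_{0,0} G$. We would like to apply Lemma~\ref{Comp1} and Observation~\ref{obs:r2todgc}. The relations $\le_{a,b}$ were only defined for graphs in $\mathcal G$, but $K_{1,4}$ has $\Delta = 4\ge 3$ and four edges, so $K_{1,4}\in\mathcal G$ and those results apply directly.

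The key steps are as follows.
\begin{enumerate}
\item By Lemma~\ref{Comp1}, $K_{1,4}\le_{0,0}G$ gives $K_{1,4}\le_0 G$, so $\Delta(L^k(G))\ge\Delta(L^k(K_{1,4}))$ for all $k\ge 0$.
\item Compute $L(K_{1,4})=K_4$, which is $3$-regular. Every triangle in $K_4$ is therefore a $\Delta$-triangle with $\Delta(L(K_{1,4}))=3$.
\item Apply Observation~\ref{DirectFormula} to $K_{1,4}$ with the exponent there set to $1$ and $x=3$. This gives, for all $r\ge 1$,
\[\Delta(L^r(K_{1,4}))=(3-2)\cdot 2^{r-1}+2=8\cdot 2^{r-4}+2.\]
Alternatively, Lemma~\ref{lem:DeltaOrMoreTriangle} with $d=3$ at level $1$ gives the lower bound $2^{r-1}+2$ directly, which is all we need.
\item Combine steps 1 and 3 to get $\Delta(L^k(G))\ge 8\cdot 2^{k-4}+2$ for all $k\ge 1$.
\item Comparing with Proposition~\ref{prop:dgc} for large $k$ yields $\dgc(G)\ge 8$.
\end{enumerate}

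The main obstacle is only a bookkeeping one: making sure that using a star as the comparison graph is legitimate. Lemma~\ref{Comp1} excludes stars only at the step $\Delta(L(H))\ge\Delta(H)$. Its proof applies that step to $L^k(G_2)$ with $k\ge 1$, never to a star, and here $G_2=G$ is not needed to be non-star anyway, since $x=y=0$. If this reliance on Lemma~\ref{Comp1} feels delicate, there is a direct route instead. By Proposition~\ref{subindsub}, $L(K_{1,4})=K_4$ is an induced subgraph of $L(G)$, so $L(G)$ contains a triangle all of whose vertices have degree at least $3$. Lemma~\ref{lem:DeltaOrMoreTriangle} with $d=3$ and $k=1$ then gives $\Delta(L^r(G))\ge 2^{r-1}+2=8\cdot 2^{r-4}+2$ for all $r\ge 1$, bypassing the relation machinery entirely. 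We would present this second route as the main argument.
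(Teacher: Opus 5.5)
Your main argument is correct and is essentially the paper's proof. The paper also takes three edges at a vertex of degree at least $4$, observes that they form a triangle in $L(G)$ whose vertices have degree at least $3$, and applies Lemma~\ref{lem:DeltaOrMoreTriangle} with $d=3$ at level $1$; the only difference is that it gets the degree bound from Proposition~\ref{Deguv}, whereas you get it from the induced $K_4=L(K_{1,4})$.
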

\begin{proof}
    Let $v$ be a $\Delta$-vertex in $G$. By assumption, $v$ has at least four distinct neighbours. Let $a,b,c,d$ be four distinct neighbours of $v$ in $G$. Let $x, y, z \in V(L(G))$ be vertices corresponding to the edges $va, vb, vc \in E(G)$ respectively. Note that $d_{L(G)}(x) = d_G(v) + d_G(a) - 2 \ge 4 + 1 - 2 = 3$ (from Proposition~\ref{Deguv}). Similarly $d_{L(G)}(y), d_{L(G)}(z) \ge 3$. Moreover, $xy, yz, za \in E(L(G))$ as edges $va, vb, vc$ are incident on $v$. Therefore, by Lemma~\ref{lem:DeltaOrMoreTriangle}, for all $k \ge 2$, $\Delta(L^{k}(G)) = (3 - 2) \cdot 2 ^{k-1} + 2 = 8 \cdot 2^{k-4} + 2$.
\end{proof}

\begin{lemma}\label{Deg3to3deg2}
   In a graph $G \in \mathcal G$ if there exists $v,v_1,v_2,v_3\in V(G)$ such that $d_G(v)\geq 3$, $d_G(v_1),d_G(v_2),d_G(v_3)\geq 2$ and $vv_1,vv_2,vv_3\in E(G)$, then $\Delta(L^k(G))=8\cdot 2^{k-4}+2$, for all $k\geq 1$, i.e. $\dgc(G) \ge 8$.
\end{lemma}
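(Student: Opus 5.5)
The plan mirrors the proof of Lemma~\ref{Deg4Above}: find a triangle in $L(G)$ whose three vertices all have degree at least $3$, and then apply Lemma~\ref{lem:DeltaOrMoreTriangle}. The statement writes ``$=$'', but what this argument yields, and what the ``i.e.'' clause asks for, is the lower bound $\Delta(L^k(G)) \ge 8\cdot 2^{k-4}+2$. I will therefore aim for that inequality.

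Let $x,y,z\in V(L(G))$ correspond to the edges $vv_1, vv_2, vv_3\in E(G)$. All three edges are incident on $v$, so $xy, yz, zx \in E(L(G))$, and $x,y,z$ form a triangle in $L(G)$. By Proposition~\ref{Deguv},
\[
d_{L(G)}(x) = d_G(v)+d_G(v_1)-2 \ge 3+2-2 = 3 .
\]
The same bound holds for $y$ and $z$. The hypothesis $d_G(v_i)\ge 2$ is exactly what replaces the condition $\Delta(G)\ge 4$ used in Lemma~\ref{Deg4Above}.

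Now apply Lemma~\ref{lem:DeltaOrMoreTriangle} with base level $1$ and $d=3$. For every $k\ge 1$, $L^k(G)$ contains a triangle all of whose vertices have degree at least $(3-2)\cdot 2^{k-1}+2 = 8\cdot 2^{k-4}+2$. Hence $\Delta(L^k(G))\ge 8\cdot 2^{k-4}+2$ for all $k\ge 1$.

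Finally, take $k\ge \MDGPI(G)+1$. Proposition~\ref{prop:dgc} gives $\Delta(L^k(G))=\dgc(G)\cdot 2^{k-4}+2$. Comparing this with the lower bound gives $\dgc(G)\ge 8$.

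None of these steps is a real obstacle. The only care needed is bookkeeping the exponent shift from level $1$ to level $k$ in Lemma~\ref{lem:DeltaOrMoreTriangle}, and reading the conclusion as an inequality rather than an equality.
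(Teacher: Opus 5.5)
Your proposal is correct and follows the paper's proof exactly: the triangle in $L(G)$ formed by the vertices corresponding to $vv_1, vv_2, vv_3$, each of degree at least $3$ by Proposition~\ref{Deguv}, followed by Lemma~\ref{lem:DeltaOrMoreTriangle} with base level $1$ and $d=3$. You are also right to read the conclusion as $\Delta(L^k(G)) \ge 8\cdot 2^{k-4}+2$; that inequality is all the paper's own argument establishes, and it is what gives $\dgc(G)\ge 8$.
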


\begin{proof}
    Let the vertices corresponding to edges $vv_1$, $vv_2$ and $vv_3$ in $L(G)$ be $u_1$, $u_2$ and $u_3$ respectively. So, $d_G(u_i) = d_G(v)+d_G(v_i)-2 \ge 3$ for all $i\in\{1,2,3\}$ by Proposition~\ref{Deguv}. Also, the vertices $u_1$, $u_2$ and $u_3$ form a triangle in $L(G)$. Therefore, by Lemma~\ref{lem:DeltaOrMoreTriangle}, $\Delta(L^k(G))\geq (3 - 2)\cdot 2^{k-1}+2=8\cdot 2^{k-4}+2$.
\end{proof}

\begin{lemma}\label{Deg3toDeg3}
    In a graph $G\in\mathcal G$, if there exists $v_1,v_2\in V(G)$ such that $d_G(v_1),d_G(v_2)\geq 3$, then $\Delta(L^k(G))\geq8\cdot 2^{k-4}+2$ for all $k\geq 2$, i.e. $\dgc(G) \ge 8$.
\end{lemma}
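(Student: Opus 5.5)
The proof hinges on reading the two vertices $v_1, v_2$ as \emph{adjacent}, i.e. $v_1v_2\in E(G)$. The label Deg3toDeg3 and the way the lemma sits beside Lemma~\ref{Deg3to3deg2} suggest this is intended. Without adjacency the pointwise bound for every $k\ge 2$ fails. Take two claws, centres $v_1$ (leaves $a,b$) and $v_2$ (leaves $c,d$), joined by a path $v_1,u_1,\ldots,u_{t-1},v_2$ with $t\ge 3$. Then $L(G)$ is two triangles joined by a path, its only degree-$3$ vertices are non-adjacent, and $\Delta(L^2(G))=3<4=8\cdot 2^{-2}+2$. (In fact $L^2(G)$ contains a $\Delta$-triangle of degree $3$, so by Observation~\ref{DirectFormula} this graph has $\dgc=4$.) So I prove the statement with $v_1v_2\in E(G)$.

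The plan is to locate, in $L^2(G)$, a triangle all of whose vertices have degree at least $4$, and then invoke Lemma~\ref{lem:DeltaOrMoreTriangle} with $k=2$ and $d=4$. That gives, for every $r\ge 2$,
\[\Delta(L^r(G))\ \ge\ (4-2)\cdot 2^{r-2}+2 \;=\; 8\cdot 2^{r-4}+2,\]
which is exactly the claim, and $\dgc(G)\ge 8$ follows.

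To build the triangle, let $x\in V(L(G))$ correspond to the edge $v_1v_2$. By Proposition~\ref{Deguv}, $d_{L(G)}(x)=d_G(v_1)+d_G(v_2)-2\ge 4$. Since $d_G(v_1)\ge 3$, there are two edges $v_1a$ and $v_1b$ besides $v_1v_2$. Since $d_G(v_2)\ge 3$, there is an edge $v_2c$ besides $v_2v_1$. These three edges are distinct even if $c\in\{a,b\}$, because they have different endpoint pairs. Let $p,q,s\in V(L(G))$ correspond to $v_1a$, $v_1b$ and $v_2c$; all three are neighbours of $x$ in $L(G)$. By Proposition~\ref{Deguv}, each of $p,q,s$ has degree at least $3+1-2=2$ in $L(G)$.

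Now pass to $L^2(G)$ and take the vertices corresponding to the three edges $xp$, $xq$, $xs$ of $L(G)$. These edges pairwise share the endpoint $x$, so the three vertices form a triangle in $L^2(G)$. Applying Proposition~\ref{Deguv} again, each has degree at least $d_{L(G)}(x)+2-2\ge 4$, which completes the argument.

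The only delicate point is the hypothesis issue above; the computation itself is routine. I would not try to prove the non-adjacent version as worded, since the two-claws example refutes it.
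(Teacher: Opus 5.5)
Your proof is correct and is essentially the paper's argument. The paper also takes the vertex of $L(G)$ corresponding to the edge $v_1v_2$ (degree $\ge 4$) together with two edges at $v_1$ and one edge at $v_2$, and applies Lemma~\ref{lem:DeltaOrMoreTriangle} with $d=4$ to the resulting triangle in $L^2(G)$. You are also right that adjacency is needed: the paper's proof silently uses the edge $v_1v_2$, every application of this lemma in the paper is to two adjacent degree-$3$ vertices, and your two-claws counterexample is exactly the paper's $G_{2,2,n}$ with $n\ge 8$, for which Lemma~\ref{G2M} gives $\dgc(G_{2,2,n})=4$.
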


\begin{proof}
    Let $u_1,u_2\in N_G(v_1)\setminus\{v_2\}$ and $u_3,u_4\in N_G(v_2)\setminus\{v_1\}$. Let the vertices in $L(G)$ corresponding to edge $v_1v_2$ be $v$ and to edges $v_1u_1$, $v_1u_2$, $v_2u_3$ and $v_2u_4$ be $v_1'$, $v_2'$, $v_3'$ and $v_4'$ respectively. Now, $d_{L(G)}(v)= d_G(v_1)+d_G(v_2)-2\geq 4$. Moreover, $d_{L(G)}(v_i')=d_G(v_1)+d_G(u_i)-2\geq 2$ for all $i\in\{1,2\}$ and $d_{L(G)}(v_i')=d_G(v_2)+d_G(u_i)-2\geq 2$ for all $i\in\{3,4\}$. Also, $v$ is adjacent to $v_i'$ for all $i\in\{1,2,3,4\}$ as the edge corresponding to vertex $v$ in $G$ shares a vertex each with edges corresponding to vertices $v_i'$, $i\in\{1,2,3,4\}$.

    In $L^2(G)$, let vertices corresponding to edge $vv_i'$ be $s_i$ for all $i\in\{1,2,3,4\}$. So, by Proposition~\ref{Deguv}, $d_{L^2(G)}(s_i)=d_{L(G)}(v)+d_{L(G)}(v_i')-2\geq 4$ for all $i\in\{1,2,3,4\}$. Also, the vertices $s_1$, $s_2$ and $s_3$ form a triangle as their corresponding edges in $L(G)$ are all incident on a common vertex $v$. So, by Lemma~\ref{lem:DeltaOrMoreTriangle}, $\Delta(L^k(G))\geq 2\cdot2^{k-2}+2=8\cdot 2^{k-4}+2$ for all $k\geq 2$.
\end{proof}

\subsection{Structural Results}\label{subsec:structuremaxdeg}

In this section, we focus on the possible values of the degree growth constant for graphs in $\mathcal G$. Our first result shows that the set $\{\dgc(G) \mid G \in \mathcal G\}$ has a minimum, which is $c_1 = 3$.

\begin{lemma}\label{G1M}
    Let $G_1$ be defined as shown in Figure~\ref{G1}. Then $\dgc(G_1) = 3 \le \dgc(G)$ for all $G \in \mathcal G$. Consequently, $\min\{\dgc(G) \mid G \in \mathcal G\} = 3$.
    \begin{figure}[ht!]
    \centering
    \includegraphics[width=0.22\textwidth]{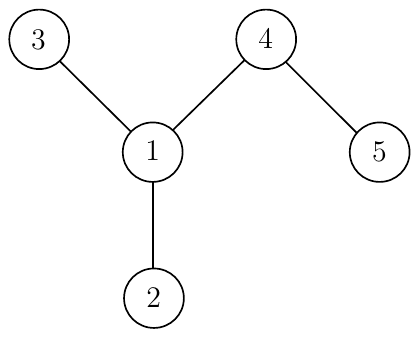}
    \caption{The graph $G_1$}
    \label{G1}
    \end{figure}
\end{lemma}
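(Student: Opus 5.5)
The plan is to first identify $G_1$ and compute $\dgc(G_1)$ exactly. I take $G_1$ to be the \emph{fork} (chair): a vertex $v$ of degree $3$ with neighbours $a,b,c$, plus one more edge $cd$ with $d$ a new vertex. I expect this to match Figure~\ref{G1}, since it is the smallest prolific graph that is neither a star nor contains a cycle.

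\textbf{Computing $\dgc(G_1)$.} By Proposition~\ref{Deguv}, the edges $va,vb,vc,cd$ become vertices of degrees $2,2,3,1$ in $L(G_1)$. Here $va,vb,vc$ form a triangle and $cd$ is a pendant at $vc$, so $L(G_1)\cong CP_4$. The proof of Lemma~\ref{CycPen} already handles this graph: $L(CP_4)$ satisfies the hypotheses of Lemma~\ref{LongDeltaPath} with $l=2$. Hence $L^3(CP_4)$ contains a $\Delta$-triangle with $\Delta(L^3(CP_4))=5$, and Observation~\ref{DirectFormula} gives $\Delta(L^k(CP_4))=3\cdot 2^{k-3}+2$ for $k\ge 3$. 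Shifting the index by one, $\Delta(L^k(G_1))=\Delta(L^{k-1}(CP_4))=3\cdot 2^{k-4}+2$ for all $k\ge 4$. By the uniqueness in Proposition~\ref{prop:dgc}, $\dgc(G_1)=3$. I would double-check the base case by direct computation: $L^2(G_1)$ has $\Delta=3$, $L^3(G_1)$ has $\Delta=4$, and $L^4(G_1)$ has a $\Delta$-triangle of degree $5$.

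\textbf{Lower bound $\dgc(G)\ge 3$.} Let $G\in\mathcal G$ and pick $v$ with $d_G(v)\ge 3$, with neighbours $a,b,c$. If $d_G(v)\ge 4$, then Lemma~\ref{Deg4Above} gives $\dgc(G)\ge 8$. So assume $d_G(v)=3$. Since $|E(G)|\ge 4$ and $G$ is connected, some edge $e\notin\{va,vb,vc\}$ is incident to one of $a,b,c$; it cannot be incident to $v$, whose three edges are already listed. There are two cases.
\begin{enumerate}
\item If $e$ joins two of $a,b,c$, then $G$ contains a triangle, i.e.\ $C_3\subseteq G$, and Lemma~\ref{CycPen} yields $\dgc(G)\ge 6$.
\item Otherwise $e$ joins one of them, say $c$, to a vertex $d\notin\{v,a,b,c\}$. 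Then $v,a,b,c,d$ span a copy of the fork, so $G_1\le_{0,0}G$, and Observation~\ref{obs:r2todgc} gives $\dgc(G)\ge\dgc(G_1)=3$.
\end{enumerate}
In every case $\dgc(G)\ge 3$, which establishes the minimum.

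\textbf{Main obstacle.} The only delicate point is the exact computation of $\dgc(G_1)$. It requires confirming that the hypotheses of Lemma~\ref{LongDeltaPath} hold for $L(CP_4)$, namely that its $\Delta$-vertices induce a path and that there are at least three $\Delta$--$(\Delta-1)$ edges. It also requires getting the index shift right, so that the constant comes out as $3$ rather than $6$. The case analysis for the lower bound is routine.
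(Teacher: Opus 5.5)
Your proposal is correct and follows essentially the paper's route. You get $\dgc(G_1)=3$ by applying Lemma~\ref{LongDeltaPath} with $l=2$ to $L^2(G_1)=L(CP_4)$, and the lower bound from Lemma~\ref{Deg4Above} when $\Delta(G)\ge 4$ together with embedding $G_1$ as a subgraph otherwise. The only difference is the case split: you branch on whether the neighbours of a degree-$3$ vertex span an extra edge and send the triangle case to Lemma~\ref{CycPen}, whereas the paper splits into stars, $|V(G)|=4$ (using $L(G_1)=CP_4\subseteq G$), and non-star graphs on at least five vertices; your split is slightly cleaner but not a genuinely different argument.
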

\begin{proof}
    Note that $L^2(G_1)$ has its $\Delta$-vertices induce a path of two vertices in $L^2(G_1)$ (Figure~\ref{fig:L^2(G_1)}), with $\Delta(L^2(G_1))=3$. Also, $L^2(G_1)$ has $4$ edges connecting a $\Delta$-vertex to a $(\Delta-1)$-vertex. So, by Lemma~\ref{LongDeltaPath}, $L^4(G_1)$ has a $\Delta$-triangle with $\Delta(L^4(G_1))=5$. So, $\Delta(L^k(G_1))=3\cdot2^{k-4}+2$ by Observation~\ref{DirectFormula}, implying $\dgc(G_1) = 3$.
    
    \begin{figure}[ht!]
    \centering
    \includegraphics[width=0.15\textwidth]{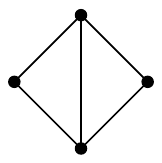}
    \caption{The graph $L^2(G_1)$}
    \label{fig:L^2(G_1)}
    \end{figure}
    
    Now we show that for all graphs $G \in \mathcal G$, $\dgc(G) \ge 3$. We look into the following cases.
    
    \subparagraph*{Case (a): $G$ is isomorphic to a star with $r\ge 4$ leaves ($K_{1, r}$).} The central vertex has degree at least $4$, and therefore Lemma~\ref{Deg4Above} implies $\dgc(G) \ge 8$.

    \subparagraph*{Case (b): $|V(G)| = 4$.} Since $G$ is prolific, we must have $G \in \{CP_4, K_4, K_4 - e\}$, where $K_4 - e$ is the unique graph on $4$ vertices and $5$ edges. Notice that $L(G_1) = CP_4$ and $CP_4 \subseteq K_4 - e \subseteq K_4$, and hence $G_1 \le_{1, 0} G$. By Observation~\ref{obs:r2todgc}, we have $\dgc(G) \ge \dgc(G_1) = 3$.

    \subparagraph*{Case (c): $G$ is not a star $K_{1,r}$ for $r \ge 4$ and has at least $5$ vertices.} In this case, we show that $G_1 \subseteq G$. Since, $G$ is prolific, $\Delta(G) \ge 3$. Let $v$ be a $\Delta$-vertex of $G$. Let $v_1, v_2, v_3$ be three distinct neighbours of $v$. If there exists $u \notin \{v, v_1, v_2, v_3\}$ such that $u$ is adjacent to $v_1$, $v_2$ or $v_3$, we are done as the vertices $u, v, v_1, v_2, v_3$ contain $G_1$ as a subgraph. The only case that remains is when all vertices in $V(G) \setminus \{v\}$ are neighbours of $v$, and no two of them are adjacent. This is a contradiction to $G$ being not a star. Therefore, we have $G_1 \subseteq G$, i.e. $G_1 \le_{0,0} G$ implying $\dgc(G) \ge \dgc(G_1) = 3$.
\end{proof}

Next, we show the existence of the second minimum $c_2 = 4$ of $\{\dgc(G) \mid G \in \mathcal G\}$.

\begin{lemma}\label{G2M}
    Let $G_{2,1,n}$ be a graph in which a pendant vertex is attached to one of the two penultimate vertices of a path with $n-1$ vertices (see Figure~\ref{G21n}). Also, let $G_{2,2,n}$ be a graph in which a pendant vertex each is attached to both of the penultimate vertices of a path with $n-2$ vertices (see Figure~\ref{G22n}). Then, for every integer $a\geq 6$ and $b\geq 8$, we have $\dgc(G_{2,1,a})=\dgc(G_{2,2,b})=4\le\dgc(G)$ for all $G\in\mathcal G\setminus\{G_1\}$. Consequently, $\min\{\dgc(G) \mid G \in \mathcal G\setminus \{G_1\}\} = 4$.
    
    \begin{figure}[ht!]
        \centering
        \includegraphics[width=0.3\textwidth]{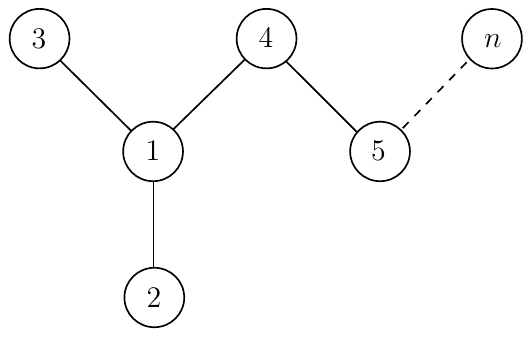}
        \caption{The graph $G_{2,1,n}$. Dashed edge denotes a path of length $\geq 1$}
        \label{G21n}
    \end{figure}
    \begin{figure}[ht!]
        \centering
        \includegraphics[width=0.5\textwidth]{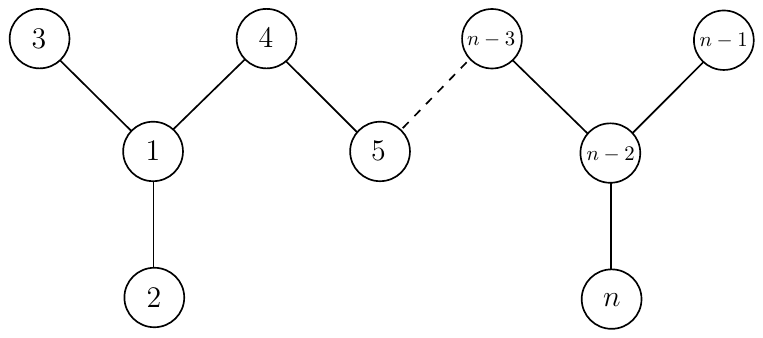}
        \caption{the graph $G_{2,2,n}$. Dashed edge denotes a path of length $\geq 1$}
        \label{G22n}
    \end{figure}
\end{lemma}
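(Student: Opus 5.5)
The plan has two parts. First I compute $\dgc$ directly for the two families using Observation~\ref{DirectFormula}. Then I prove the lower bound $4$ for all of $\mathcal G\setminus\{G_1\}$ by a case analysis that either invokes one of Lemmas~\ref{CycPen}, \ref{Deg4Above}, \ref{Deg3to3deg2}, \ref{Deg3toDeg3}, or embeds $G_{2,1,6}$ as a subgraph and applies Observation~\ref{obs:r2todgc}.

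\textbf{Computing the values.} In $G_{2,1,a}$ let $w$ be the unique degree-$3$ vertex. It has two leaf neighbours and a third neighbour $p$ starting an arm of length $a-3\ge 3$. In $L(G_{2,1,a})$ the three edges at $w$ become a triangle $x,y,z$ with degrees $2,2,3$, where $z$ corresponds to $wp$; $z$ continues into a path, and its next vertex has degree $2$ because the arm is long. In $L^2(G_{2,1,a})$ the three edges at $z$ give three mutually adjacent vertices of degree $3+2-2=3$. Every other vertex has degree at most $3$; the lower bound $a\ge 6$ is used exactly here, so that the tail is long enough that no extra degree appears and the arm does not run out. Hence $L^2(G_{2,1,a})$ has a $\Delta$-triangle with $\Delta=3$, and Observation~\ref{DirectFormula} gives $\Delta(L^k)=2^{k-2}+2=4\cdot 2^{k-4}+2$, i.e.\ $\dgc=4$.

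For $G_{2,2,b}$ the same local computation happens independently at each of the two degree-$3$ vertices. The bound $b\ge 8$ guarantees the middle path is long enough that the two ends do not interact in $L^2$. So again $L^2$ has a $\Delta$-triangle with $\Delta=3$ and $\dgc=4$. This part I expect to be a routine degree check.

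\textbf{Lower bound.} Let $G\in\mathcal G\setminus\{G_1\}$.
\begin{enumerate}
\item If $\Delta(G)\ge 4$, then Lemma~\ref{Deg4Above} gives $\dgc(G)\ge 8$.
\item If $G$ contains a cycle, then Lemma~\ref{CycPen} gives $\dgc(G)\ge 6$.
\item Otherwise $G$ is a tree with $\Delta(G)=3$.
\begin{enumerate}
\item If two adjacent vertices have degree $3$, the argument of Lemma~\ref{Deg3toDeg3} (which really uses the edge $v_1v_2$) gives $\dgc(G)\ge 8$.
\item Otherwise fix a degree-$3$ vertex $v$. If some neighbour of $v$ begins a path of length $3$ leaving $v$, then $G_{2,1,6}\subseteq G$. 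In that case $G_{2,1,6}\le_{0,0}G$, and Observation~\ref{obs:r2todgc} yields $\dgc(G)\ge 4$.
\item If no such path exists, every vertex lies within distance $2$ of $v$. Since the neighbours of $v$ have degree at most $2$, $G$ is a spider centred at $v$ with legs of lengths in $\{1,2\}$. Legs $(1,1,1)$ give the claw, which is excluded, and legs $(1,1,2)$ give $G_1$, also excluded. The remaining spiders $(1,2,2)$ and $(2,2,2)$ contain the spider $(1,2,2)$.
\end{enumerate}
\end{enumerate}

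\textbf{The spider $(1,2,2)$.} In its line graph, the two edges at $v$ that lie on the long legs become adjacent vertices of degree $3$. Lemma~\ref{Deg3toDeg3} applied to $L$ of this spider then gives $\dgc\ge 8$. Monotonicity under subgraphs (Observation~\ref{obs:r2todgc}) transfers this bound to every spider containing it.

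The main obstacle I anticipate is making the tree case analysis genuinely exhaustive. In particular, I must be careful that Lemma~\ref{Deg3toDeg3} is only invoked for adjacent degree-$3$ vertices: $G_{2,2,b}$ itself has two non-adjacent ones, yet $\dgc(G_{2,2,b})=4$. The ``path of length $3$ or radius $2$'' dichotomy is what I would rely on to avoid any further subcases.
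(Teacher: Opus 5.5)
\textbf{The spider step is wrong as written.} Your overall plan is sound, and the value computations for $G_{2,1,a}$ and $G_{2,2,b}$ are right. The problem is the claim that Lemma~\ref{Deg3toDeg3} applied to $L(S)$ gives $\dgc(S)\ge 8$, where $S$ is the spider with legs $(1,2,2)$. That cannot be true: $S$ is exactly the paper's $G_3$, and $\dgc(G_3)=5.5$ (Lemma~\ref{G3M}).

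Applied to $H=L(S)$ (prolific: a triangle with two pendants), the lemma gives $\Delta(L^k(H))\ge 8\cdot 2^{k-4}+2$ for $k\ge 2$. But $L^k(H)=L^{k+1}(S)$, so this says $\Delta(L^{j}(S))\ge 8\cdot 2^{j-5}+2=4\cdot 2^{j-4}+2$ for $j\ge 3$. In general $\dgc(L(S))=2\,\dgc(S)$, so the correct conclusion is $\dgc(S)\ge 4$. That is exactly the bound you need, with no slack. So the step is repaired by redoing the index shift, and the transfer to the $(2,2,2)$-spider via Observation~\ref{obs:r2todgc} then also gives $\ge 4$.

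\textbf{Comparison with the paper.} With that fix, your argument is a valid alternative. The paper reduces to trees with $\Delta=3$ as you do, but then splits on the number $n$ of vertices of a longest path $v_1,\ldots,v_n$:
\begin{itemize}
\item For $n\ge 6$, or $n=5$ with the extra vertex at $v_2$ or $v_4$, it embeds $G_{2,1,6}$.
\item For $n=5$ with the extra vertex at $v_3$, it gets $G_3\subseteq G$ and shows $G_{2,1,6}\le_{2,2}G_3$ by exhibiting $L^2(G_{2,1,6})\subseteq L^2(G_3)$.
\item For $n=4$, it gets either $G_1$ or two adjacent degree-$3$ vertices.
\end{itemize}
Your dichotomy (a $3$-edge path leaves a fixed degree-$3$ vertex, or $G$ is a radius-$2$ spider) makes exhaustiveness more transparent. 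Handling $G_3$ through the adjacent degree-$3$ pair in $L(G_3)$ replaces the explicit level-$2$ embedding with a reuse of Lemma~\ref{Deg3toDeg3}. The paper's longest-path split, on the other hand, is the template it reuses for the next minimum in Lemma~\ref{G3M}. You are also right that Lemma~\ref{Deg3toDeg3} needs $v_1v_2\in E(G)$, which its statement omits but its proof uses.
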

\begin{proof}
    All graphs $G_{2,1,a}$ with $a\geq6$ and $G_{2,2,b}$ with $b\geq8$ satisfy $\Delta(L^2(G_{2,1,a}))=\Delta(L^2(G_{2,2,b}))=3$; moreover, $L^2(G_{2,1,a})$ and $L^2(G_{2,2,b})$ have a $\Delta$-triangle (Refer Figure~\ref{L2G2}). So, $\Delta(L^k(G_{2,1,a}))=\Delta(L^k(G_{2,2,b}))=4\cdot2^{k-4}+2$ by Observation~\ref{DirectFormula} for $k\geq 2$. Therefore, we have $\dgc(G_{2,1,a})=\dgc(G_{2,2,b})=4$.

\begin{figure}[ht!]
    \centering
    \includegraphics[width=0.9\textwidth]{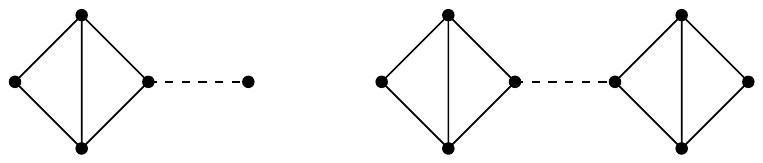}
    \caption{Graphs $L^2(G_{2,1,n}),n\geq6$ and $L^2(G_{2,2,n}),n\geq8$ respectively. Dashed edge denotes a path of length $\geq1$}
    \label{L2G2}
\end{figure}
    Now, we need to show that any graph $G\in\mathcal G\setminus(\{G_1\} \cup \{G_{2,1,n}\mid n\ge 6\} \cup \{G_{2,2,n}\mid n\ge 8\})$ must have $\dgc(G) \geq 4$. 
    
    We have already shown in Lemma~\ref{Deg4Above} and Lemma~\ref{CycPen} that if $G$ is a graph with $\Delta(G)\geq4$ or if it contains a cycle, then $\dgc(G)\geq 6$. It remains to show for acyclic graphs with maximum degree equal to $3$ (recall that $\mathcal{G}$ does not contain graphs with maximum degree at most $2$). Consider an acyclic graph $G\in\mathcal G\setminus\{G_1\}$ satisfying $\Delta(G)=3$. Consider a longest path $P$ with $n$ vertices $v_1,v_2,\ldots,v_n$ of $G$. Notice that there cannot be any other edge connecting two vertices of $P$, otherwise the graph would not be acyclic. Also, $n\ge 4$ as otherwise $G\not\in \mathcal G$. $P$ has the following cases.

    \subparagraph*{Case (a): $n\geq6$.} In this case, if there is a vertex adjacent to any vertex $v_i$, $i\in \{2,,3,\ldots,n-1\}$, then $G_{2,1,6}\leq_{0,0}G$. So, by Observation~\ref{obs:r2todgc}, $4 = \dgc(G_{2,1,6})\le\dgc(G)$.

    \subparagraph*{Case (b): $n=5$.} In this case, if there is a vertex $u\not\in V(P)$ adjacent to any vertex $v_i$, $i\in \{2,4\}$, then $G_{2,1,6}\leq_{0,0}G$. So, by Observation~\ref{obs:r2todgc}, $4 = \dgc(G_{2,1,6})\le \dgc(G)$. Otherwise, there is at least one vertex adjacent to $v_3$. So, $G_3\leq_{0,0}G$ (see Figure~\ref{fig:G3}) and $G_{2,1,6}\leq_{2,2}G_3$ (see Figure~\ref{L^(G_3)-vs-L^2(G_{2,1,6})}). So, by Observation~\ref{obs:r2todgc}, $4 = \dgc(G_{2,1,6})\le \dgc(G_3)\le \dgc(G)$.

    \begin{figure} [!htbp]
        \begin{minipage}[t]{0.49\textwidth}
            \centering
            \includegraphics[width = 0.5\textwidth]{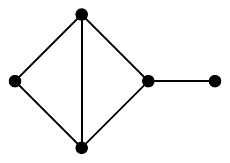}
            \subcaption{The graph $L^2(G_{2,1,6})$.} \label{fig:L^2(G_{2,1,6})}
        \end{minipage} \hfill \hfill 
        \begin{minipage}[t]{0.49\textwidth}  
            \centering 
            \includegraphics[width = 0.65\textwidth]{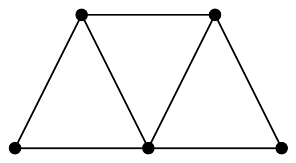}
            \subcaption{The graph $L^2(G_3)$.} \label{fig:L^2(G_3)}
        \end{minipage} 
        \caption{Graphs $L^2(G_3)$ and $L^2(G_{2,1,6})$.}\label{L^(G_3)-vs-L^2(G_{2,1,6})}
    \end{figure}

    \subparagraph*{Case (c): $n=4$.} In this case, there can be a path of at most one vertex outside $P$ attached to either $v_2$ or $v_3$, as otherwise $P$ would not be a longest path of $G$. If the vertex is adjacent to only $v_2$ or $v_3$, then $G$ is isomorphic to $G_1$. Otherwise, if there is one vertex each adjacent to $v_2$ and $v_3$, then $G$ has a degree $3$ vertex $v_2$ adjacent to another degree $3$ vertex $v_3$. So, by Lemma~\ref{Deg3toDeg3}, $\dgc(G)=8$.

    Above cases are exhaustive. So, for any graph $G\in G\setminus\{G_1\}$, $\dgc(G)\ge 4$.
\end{proof}

Moreover, as a consequence of Lemma~\ref{G2M}, we get that $G_1$ is the only prolific graph with $\dgc(G) = 3$, i.e. $\mathcal G_1=\{G \in \mathcal G \mid \dgc(G) = c_1\} = \{G_1\}$. For completeness we state this as a theorem below.

\begin{theorem}
    $\mathcal G_1=\{G \in \mathcal G \mid \dgc(G) = c_1\} = \{G_1\}$.
\end{theorem}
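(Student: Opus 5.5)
The statement follows almost immediately from Lemma~\ref{G1M} and Lemma~\ref{G2M}, so the plan is to prove the two inclusions separately.

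\emph{Inclusion $\{G_1\}\subseteq\mathcal G_1$.} Lemma~\ref{G1M} already computes $\dgc(G_1)=3=c_1$. It also shows that $c_1=3$ is the minimum of $\{\dgc(G)\mid G\in\mathcal G\}$, so $c_1$ is well defined. Hence $G_1\in\mathcal G_1$.

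\emph{Inclusion $\mathcal G_1\subseteq\{G_1\}$.} Let $G\in\mathcal G$ with $G\neq G_1$. Lemma~\ref{G2M} gives $\dgc(G)\ge 4>3$, so $G\notin\mathcal G_1$.

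The one point that needs care is that the case analysis in Lemma~\ref{G2M} really covers every prolific graph other than $G_1$. I would re-check it as follows.
\begin{itemize}
\item If $\Delta(G)\ge 4$, Lemma~\ref{Deg4Above} gives $\dgc(G)\ge 8$.
\item If $G$ contains a cycle, Lemma~\ref{CycPen} gives $\dgc(G)\ge 6$. Small cases such as $K_4$ and $K_4-e$ contain cycles, so they fall here.
\item Otherwise $G$ is a tree with $\Delta(G)=3$. Split on the number $n$ of vertices of a longest path:
  \begin{itemize}
  \item For $n\ge 5$, the proof of Lemma~\ref{G2M} embeds $G_{2,1,6}$ or $G_3$ into $G$ and applies Observation~\ref{obs:r2todgc}, giving $\dgc(G)\ge 4$.
  \item For $n=4$, the tree is either $G_1$ itself or has two adjacent degree-$3$ vertices; in the latter case Lemma~\ref{Deg3toDeg3} gives $\dgc(G)\ge 8$.
  \item $n\le 3$ would make $G$ a star, which is excluded from $\mathcal G$ unless it has $\ge4$ leaves, and that case is already handled by the first bullet.
  \end{itemize}
\end{itemize}

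The main (minor) obstacle is confirming that the case $n=5$ in Lemma~\ref{G2M} is exhaustive. If no vertex off $P$ is adjacent to $v_2$ or $v_4$, then, because $\Delta(G)=3$ and $G$ is not a path, some vertex must be adjacent to $v_3$. Any branching at $v_1$ or $v_5$ would contradict maximality of $P$, so no other configuration occurs. With that checked, the theorem follows.
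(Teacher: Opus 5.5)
Your proposal is correct and takes the same approach as the paper, which also obtains the theorem as an immediate consequence of Lemma~\ref{G1M} ($\dgc(G_1)=3$) and Lemma~\ref{G2M} ($\dgc(G)\ge 4$ for every prolific $G\neq G_1$). Your re-check of the case analysis (the $\Delta\ge 4$ and cyclic cases, then trees with $\Delta=3$ split by the number of vertices of a longest path) agrees with the paper's proof of Lemma~\ref{G2M}.
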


We now show that $\{ \dgc(G) \mid G \in \mathcal G \}$ has a third minimum, $c_3 = 5.5$. It is interesting to note that unlike the first and the second minimum, the third minimum is not an integer.

\begin{lemma}\label{G3M}
    Let $G_3$ be defined as shown in Figure~\ref{fig:G3}. Then $\dgc(G_3) = 5.5 \le \dgc(G)$ for all $G \in \mathcal G\setminus(\{G_1\} \cup \{G_{2,1,n}\mid n\ge 6\} \cup \{G_{2,2,n}\mid n\ge 8\})$. Consequently, $\min\{\dgc(G) \mid G \in \mathcal G\setminus(\{G_1\} \cup \{G_{2,1,n}\mid n\ge 6\} \cup \{G_{2,2,n}\mid n\ge 8\})\} = 5.5$.
    
    \begin{figure}[ht!]
        \centering
        \includegraphics[width=0.3\textwidth]{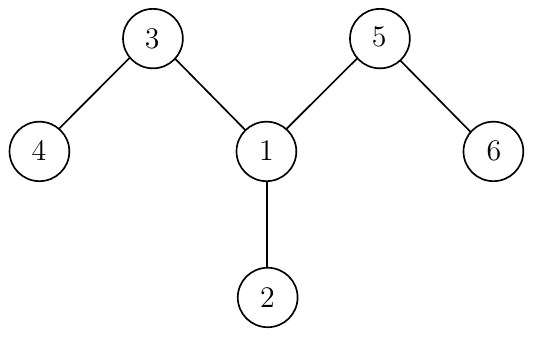}
        \caption{The graph $G_{3}$}
        \label{fig:G3}
    \end{figure}
\end{lemma}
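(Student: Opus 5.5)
The plan has two halves: compute $\dgc(G_3)$ exactly, then prove the lower bound by reducing to trees of maximum degree $3$ and locating $G_3$ as a subgraph. I read $G_3$ as in Case (b) of Lemma~\ref{G2M}: the path $a',a,c,b,b'$ with one pendant vertex $d$ attached to the middle vertex $c$. In other words, $G_3$ is the spider with legs of lengths $2,2,1$.

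\emph{Computing $\dgc(G_3)$.} I will track degrees by hand using Proposition~\ref{Deguv}.
In $L(G_3)$, the vertices $ca,cb,cd$ have degrees $3,3,2$ and form a triangle, and $aa',bb'$ are pendants of degree $1$ on $ca$ and $cb$.
In $L^2(G_3)$ there is a unique $\Delta$-vertex $x=(ca,cb)$ of degree $4$. It is adjacent to all four other vertices: $y_1=(ca,cd)$ and $y_2=(cb,cd)$ of degree $3$, and $z_1=(ca,aa')$ and $z_2=(cb,bb')$ of degree $2$. The remaining edges are $y_1y_2$, $y_1z_1$ and $y_2z_2$.
In $L^3(G_3)$ we get $\Delta=5$, attained exactly at $xy_1$ and $xy_2$. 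These two vertices are adjacent, so the $\Delta$-vertices induce a path on two vertices. The $(\Delta-1)$-vertices are $xz_1$, $xz_2$ and $y_1y_2$, which gives six $\Delta$–$(\Delta-1)$ edges.
Lemma~\ref{LongDeltaPath} with $l=2$ then gives a $\Delta$-triangle in $L^5(G_3)$ with $\Delta(L^5(G_3))=3\cdot 4+1=13$. Observation~\ref{DirectFormula} yields $\Delta(L^k(G_3))=11\cdot 2^{k-5}+2=5.5\cdot 2^{k-4}+2$, so $\dgc(G_3)=5.5$.

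\emph{Lower bound.} Let $G$ be prolific and outside the excluded families. If $\Delta(G)\ge 4$ or $G$ has a cycle, then $\dgc(G)\ge 6$ by Lemma~\ref{Deg4Above} and Lemma~\ref{CycPen}. So assume $G$ is a tree with $\Delta(G)=3$. If two degree-$3$ vertices are adjacent, Lemma~\ref{Deg3toDeg3} gives $\dgc(G)\ge 8$. If some degree-$3$ vertex has three neighbours of degree $\ge 2$, Lemma~\ref{Deg3to3deg2} gives $\dgc(G)\ge 8$. Hence every degree-$3$ vertex has a leaf neighbour, and no two degree-$3$ vertices are adjacent. The goal in the remaining cases is to show $G_3\le_{0,0}G$ and conclude with Observation~\ref{obs:r2todgc}.

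\emph{Case: exactly one degree-$3$ vertex.} Then $G$ is a spider with legs $(1,p,q)$, $p\le q$. If $p=1$, then $G$ is $G_1$ (when $q=2$) or $G_{2,1,q+3}$ with $q\ge 3$, both excluded; $q=1$ is the claw, which is not prolific. If $p\ge 2$, then $G$ visibly contains the spider $(1,2,2)=G_3$.

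\emph{Case: at least two degree-$3$ vertices.} Take a degree-$3$ vertex $u$ that is extremal, i.e.\ all other degree-$3$ vertices lie in one branch at $u$. The path from $u$ toward another degree-$3$ vertex has length $\ge 2$. If the remaining non-leaf neighbour of $u$ starts a leg of length $\ge 2$, this again contains $G_3$. Otherwise $u$ has two leaves, and likewise the other extremal degree-$3$ vertex $w$ has two leaves. If some further degree-$3$ vertex lies strictly between $u$ and $w$, its branch off the $u$–$w$ path, together with its leaf neighbour, gives a $G_3$ centred at it. Hence $G$ is two degree-$3$ vertices, each carrying two leaves, joined by a path. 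This is $G_{2,2,b}$, and it is excluded when $b\ge 8$.

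\emph{Main obstacle.} The one leftover case is $b=7$: $u$ and $w$ at distance exactly $2$ with a single middle vertex. This graph does not contain $G_3$, so I will handle it directly. I expect its second line graph to already have adjacent high-degree vertices, from which Lemma~\ref{lem:DeltaOrMoreTriangle} or Lemma~\ref{LongDeltaPath} should give $\dgc\ge 6$. Verifying this small case, and making the "extremal vertex" case split airtight, is where the care is needed.
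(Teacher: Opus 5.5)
Your computation of $\dgc(G_3)$ matches the paper's exactly: the same $L^3(G_3)$, the same application of Lemma~\ref{LongDeltaPath} with $l=2$, and $\Delta(L^5(G_3))=13$.

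For the lower bound you take a different route. The paper fixes a longest path $P=v_1\dots v_n$ of the tree and splits on $n\ge 6$, $n=5$, $n=4$. If some vertex off $P$ hangs at a $v_i$ with $3\le i\le n-2$, then $G_3\subseteq G$ at once. Otherwise, maximality of $P$ forces every vertex off $P$ to be a leaf at $v_2$ or $v_{n-1}$. This pins the structure down without Lemma~\ref{Deg3to3deg2} and without any extremal-vertex argument.

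You instead first prune with Lemmas~\ref{Deg3toDeg3} and~\ref{Deg3to3deg2}. That leaves trees whose degree-$3$ vertices are pairwise non-adjacent and each have a leaf neighbour, which you split into spiders and the multi-branch case. This is the same normal form the paper later uses in Claim~\ref{clm:ge7}, so your route is more reusable. It does cost one justification you have not written: every degree-$3$ vertex lies on the $u$--$w$ path. This follows from your pruning. A branch vertex of the minimal subtree spanning the degree-$3$ vertices would be a degree-$3$ vertex with three non-leaf neighbours, which is already excluded. Equivalently, the point where an off-path degree-$3$ vertex attaches to the $u$--$w$ path is itself a degree-$3$ vertex strictly between $u$ and $w$.

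Also fix the wording for an intermediate vertex. Its off-path neighbour is forced to be its leaf. The copy of $G_3$ centred there uses its two directions along the path, each of length $\ge 2$ because degree-$3$ vertices are non-adjacent, together with that leaf. Both routes isolate the same exceptional graph $G_{2,2,7}$.

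Your leftover case closes, and more easily than you feared; you can even work one level earlier than you planned. $L(G_{2,2,7})$ is two triangles joined by an edge $UW$, where $U,W$ correspond to the two edges at the middle vertex. $U$ and $W$ are its only vertices of degree $3=\Delta$, they are adjacent, and there are four edges from them to degree-$2$ vertices. Lemma~\ref{LongDeltaPath} with $l=2$ then gives a $\Delta$-triangle in $L^3(G_{2,2,7})$ with $\Delta=5$, so $\dgc(G_{2,2,7})=6\ge 5.5$. This is exactly how the paper treats it in its Case (b).

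Do not use Lemma~\ref{lem:DeltaOrMoreTriangle} on the triangle with degrees $4,3,3$ in $L^2(G_{2,2,7})$: it only yields $\dgc\ge 4$. You need Lemma~\ref{LongDeltaPath}, or Lemma~\ref{lem:DeltaOrMoreTriangle} applied to the degree-$5$ triangle in $L^3(G_{2,2,7})$.
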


\begin{proof}
    The graph $L^3(G_3)$ (refer Figure~\ref{L3G3}) has its $\Delta$-vertices induce a path of $2$ vertices in $L^3(G_3)$ and moreover there exists at least three edges connecting a $\Delta$-vertex to a $(\Delta-1)$-vertex of $L^3(G_3)$. Note that, $\Delta(L^3(G_3)))=5$. So, $L^5(G_3)$ contains a $\Delta$-triangle with $\Delta(L^5(G_3))=13$ by Lemma~\ref{LongDeltaPath}. Hence, $\Delta(L^k(G_3))=11\cdot2^{k-5}+2=5.5\cdot2^{k-4}+2$, for all $k\geq5$ by Observation~\ref{DirectFormula}. Therefore, we have $\dgc(G_3)=5.5$.

    \begin{figure}[ht!]
        \centering
        \includegraphics[width=0.3\textwidth]{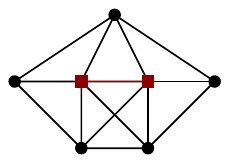}
        \caption{Graph $L^3(G_{3})$, square vertices are the $\Delta$-vertices with degree $5$ that induce a path of length $2$}
        \label{L3G3}
    \end{figure}
    Now, we need to show that any graph $G\in\mathcal G\setminus(\{G_1\} \cup \{G_{2,1,n}\mid n\ge 6\} \cup \{G_{2,2,n}\mid n\ge 8\} \cup\{G_3\})$ must have $\dgc(G) \geq 5.5$. 
    
    We have already shown in Lemma~\ref{Deg4Above} and Lemma~\ref{CycPen} that if $G$ is a graph with $\Delta(G)\geq4$ or if it contains a cycle, then $\dgc(G)\geq 6$. It remains to show for acyclic graphs with maximum degree equal to $3$ (recall that $\mathcal{G}$ does not contain graphs with maximum degree at most $2$). Consider an acyclic graph $G\in\mathcal G\setminus(\{G_1\} \cup \{G_{2,1,n}\mid n\ge 6\} \cup \{G_{2,2,n}\mid n\ge 8\} \cup\{G_3\})$ satisfying $\Delta(G)=3$. Consider a longest path $P$ with $n$ vertices $v_1,v_2,\ldots,v_n$ of $G$. Notice that there cannot be any other edge connecting two vertices of $P$, otherwise the graph would not be acyclic. Also, $n\ge 4$ as otherwise $G\not\in \mathcal G$. $G$ has the following cases.

    \subparagraph*{Case (a): $n\geq6$.} In this case, if there is a vertex adjacent to any vertex $v_i$, $i\in \{3,4,\ldots,n-2\}$, then $G_{3}\leq_{0,0}G$. So, by Observation~\ref{obs:r2todgc}, $\dgc(G_{3})\le\dgc(G)$. Otherwise, there can be a path of at most one vertex outside $P$ incident at either $v_2$ or $v_{n-1}$ or both, as otherwise $P$ would not be a longest path of $G$. If a vertex is adjacent to only $v_2$ or $v_{n-1}$ or if a vertex each is adjacent to $v_2$ and $v_{n-1}$, then $G=G_{2,1,n+1}$ or $G=G_{2,2,n+2}$, which is a contradiction.

    \subparagraph*{Case (b): $n=5$.} In this case, if there is a vertex $u\not\in V(P)$ adjacent to $v_3$, then $G_{3}\leq_{0,0}G$. So, by Observation~\ref{obs:r2todgc}, $\dgc(G_{3})\le \dgc(G)$. Otherwise, there can be a path of at most one vertex outside $P$ incident at either $v_2$ or $v_4$ or both, as otherwise $P$ would not be a longest path of $G$. If a vertex is adjacent to only $v_2$ or $v_4$ then $G=G_{2,1,6}$ --- a contradiction. Otherwise, there is a vertex each adjacent to $v_2$ and $v_4$. So, $G=G_{2,2,7}$. Therefore, $L^3(G)$ has a $\Delta$-triangle with $\Delta(G) = 5$ (refer to Figure~\ref{fig:L^3(G_{2,2,7}}) and by Observation~\ref{DirectFormula}, we have $\Delta(L^k(G)) = (5 - 2) \cdot 2^{k-3} + 2 = 6 \cdot 2^{k -4} + 2$ for $k \ge 3$. So, by Observation~\ref{obs:r2todgc}, $5.5 = \dgc(G_3)\le \dgc(G) = 6$.

    \begin{figure}
        \centering
        \includegraphics[width=0.5\linewidth]{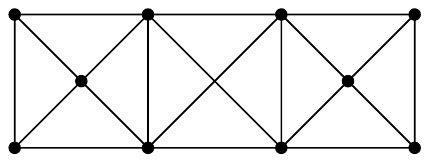}
        \caption{The graph $L^3(G_{2,2,7})$.}
        \label{fig:L^3(G_{2,2,7}}
    \end{figure}

    \subparagraph*{Case (c): $n=4$.} In this case, there can be a path of at most one vertex outside $P$ incident at either $v_2$ or $v_3$, as otherwise $P$ would not be a longest path of $G$. If the singular vertex is adjacent to only $v_2$ or $v_3$, then $G$ is isomorphic to $G_1$. Otherwise, if there is one vertex each adjacent to $v_2$ and $v_3$, then $G$ has a degree $3$ vertex $v_2$ adjacent to another degree $3$ vertex $v_3$. So, by Lemma~\ref{Deg3toDeg3}, $\dgc(G)=8$.

    The above cases are exhaustive. So, for any graph $G\in G\setminus(\{G_1\} \cup \{G_{2,1,n}\mid n\ge 6\} \cup \{G_{2,2,n}\mid n\ge 8\})$, $\dgc(G)\ge 5.5$.
\end{proof}

This gives us the set of graphs attaining the second minimum among all degree growth constants of prolific graphs, i.e. $\mathcal G_2 = \{G \in \mathcal G \mid \dgc(G) = c_2\}$.
\begin{theorem}\label{G2G}
    $\mathcal G_2 = \{G_{2,1,n} \mid n\geq6\}\cup\{G_{2,2,n} \mid n\geq8\} = \{G \in \mathcal G \mid \dgc(G) = c_2\}$.
\end{theorem}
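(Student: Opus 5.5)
The plan is to prove the two inclusions separately, relying almost entirely on Lemma~\ref{G2M} and Lemma~\ref{G3M}.

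\textbf{Inclusion $\subseteq$.} Lemma~\ref{G2M} already computes $\dgc(G_{2,1,a}) = \dgc(G_{2,2,b}) = 4 = c_2$ for every $a \ge 6$ and $b \ge 8$. The computation there goes through $L^2$: each such graph has $\Delta(L^2) = 3$ and contains a $\Delta$-triangle, so Observation~\ref{DirectFormula} applies. Hence every member of the right-hand family lies in $\mathcal G_2$. I will briefly re-check that the pictured $L^2$ really has this form for all lengths of the dashed path. This should be uniform in $n$, since only the local structure near the pendant attachments matters.

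\textbf{Inclusion $\supseteq$.} Let $G \in \mathcal G$ with $\dgc(G) = 4$.
\begin{itemize}
\item $G \neq G_1$, since $\dgc(G_1) = 3$ by Lemma~\ref{G1M}.
\item Suppose $G$ is not of the form $G_{2,1,n}$ with $n \ge 6$ or $G_{2,2,n}$ with $n \ge 8$. Then Lemma~\ref{G3M} gives $\dgc(G) \ge 5.5 > 4$, a contradiction.
\end{itemize}
So $G$ belongs to the stated family. This also confirms that $c_2 = 4$ is exactly the second minimum.

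\textbf{Main obstacle.} The step needing care is whether the case analysis in Lemma~\ref{G3M} really covers every graph outside the family. In particular, I need to handle acyclic graphs with $\Delta = 3$ that fall outside the index ranges. The case $G_{2,2,7}$ is handled explicitly there, giving $\dgc = 6$. I also need to check the boundary cases $G_{2,1,5}$ and $G_{2,2,6}$:
\begin{itemize}
\item $G_{2,1,5}$: a pendant on a penultimate vertex of $P_4$ gives $G_1$ itself.
\item $G_{2,2,6}$: pendants on both penultimate vertices of $P_4$ give two adjacent degree-$3$ vertices, so $\dgc \ge 8$ by Lemma~\ref{Deg3toDeg3}.
\end{itemize}
These match Case (c) of Lemma~\ref{G3M}. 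Finally, for the longest-path case split in Lemma~\ref{G3M}, I will confirm that the longest path's length is consistent with the labelling $n+1$ and $n+2$.
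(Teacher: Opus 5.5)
Your proposal is correct and matches the paper's proof. The paper likewise gets the two families into $\mathcal G_2$ from Lemma~\ref{G2M}, and obtains the reverse inclusion from the bound $\dgc(G)\ge 5.5$ of Lemma~\ref{G3M}, with $G_1$ excluded because $\dgc(G_1)=3$ (Lemma~\ref{G1M}). Your boundary checks ($G_{2,1,5}=G_1$, the adjacent degree-$3$ vertices in $G_{2,2,6}$, and $\dgc(G_{2,2,7})=6$) are accurate and only re-verify the case analysis already done in Lemma~\ref{G3M}.
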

\begin{proof}
    Every graph in $\mathcal G\setminus(\{G_1\}\cup\{G_{2,1,n} \mid n\geq6\}\cup\{G_{2,2,n} \mid n\geq8\})$ satisfies $\dgc(G) \ge 5.5$ as shown in Lemma \ref{G3M}. Also, by Lemma \ref{G2M}, $G_{2,1,n}\in\mathcal G_2$ for all $n\geq 6$ and $G_{2,2,n}\in\mathcal G_2$ for all $n\geq 8$ implying $\mathcal G_2=\{G_{2,1,n} \mid n\geq6\}\cup\{G_{2,2,n} \mid n\geq8\}$.
\end{proof}

Next, we study the fourth minimum among all degree growth constants. We show that the fourth minimum is $c_3 = 6$, which is again an integer, unlike the third minimum. We start by defining the following. 

\begin{definition}\label{def:H_4}
    Let $\mathcal{H}_4'$ be a set of graphs where each graph contains (i) a cycle or a path $\tilde{H}$, (ii) every remaining vertex is a pendant vertex with its neighbour in $\tilde{H}$, and (iii) the distance between any two degree $3$ vertices is at least $3$. Let $\mathcal H_4=\mathcal{H}_4'\setminus(\mathcal G_1\cup \mathcal G_2 \cup \{G_3\})$ (Figure~\ref{H4}).
\end{definition}


\begin{figure} [!htbp]
    \centering 
    \includegraphics[width=0.9\textwidth]{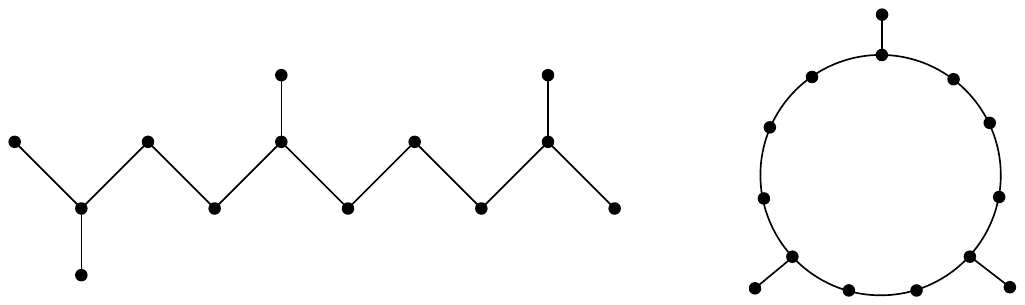}
    \caption{Example of graphs in the set $\mathcal{H}_4$.}\label{H4}
\end{figure}

\begin{figure} [!htbp]
    \centering
    \includegraphics[width=0.9\textwidth]{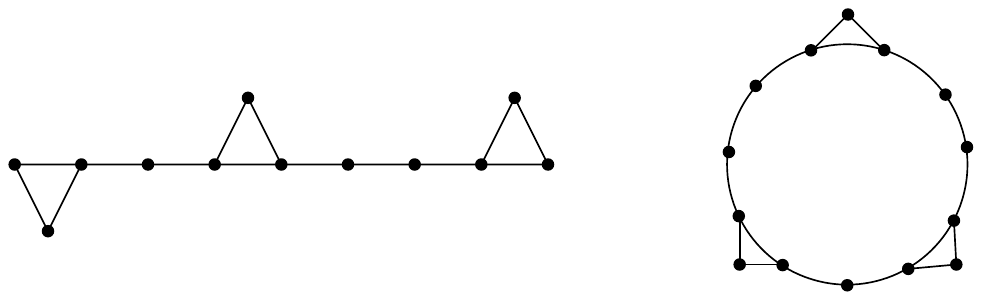}
    \caption{Example of line graphs of graphs in the set $\mathcal{H}_4$.}\label{L(H4)}
\end{figure}

\begin{theorem}\label{G4M}
    Let $G_4$ be defined as shown in Figure~\ref{G4}. Then, for all $H\in\mathcal H_4$, we have $\dgc(G_4) = \dgc(H) = 6$. Moreover, $7\le \dgc(G)$ for all $G \in \mathcal G\setminus(\mathcal G_1 \cup \mathcal G_2\cup\{G_3\}\cup\{G_4\}\cup\mathcal H_4)$. Consequently, $\min\{\dgc(G) \mid G \in \mathcal G\setminus(\mathcal G_1 \cup \mathcal G_2\cup \{G_3\})\} = 6$.
    \begin{figure}[ht!]
        \centering
        \includegraphics[width=0.3\textwidth]{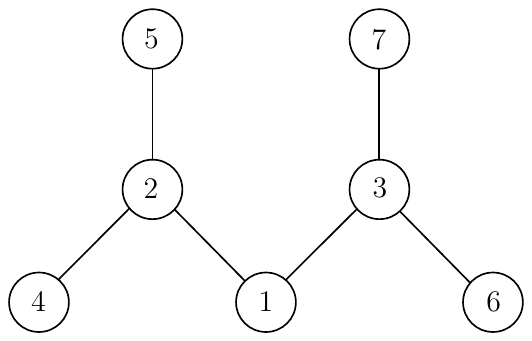}
        \caption{The graph $G_{4}$}
        \label{G4}
    \end{figure}
\end{theorem}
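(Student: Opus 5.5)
The proof has three parts, in the same pattern as Lemmas~\ref{G2M} and~\ref{G3M}.

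\textbf{Computing $\dgc(G_4)$.} I would compute a small iterated line graph of $G_4$ and check that its $\Delta$-vertices induce a short path. I also need at least three edges joining a $\Delta$-vertex to a $(\Delta-1)$-vertex. Then Lemma~\ref{LongDeltaPath} gives a $\Delta$-triangle, and Observation~\ref{DirectFormula} yields $\Delta(L^k(G_4)) = 6\cdot 2^{k-4}+2$. The target is, for example, $\Delta(L^3)=5$ with a $\Delta$-triangle, as happened for $G_{2,2,7}$.

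\textbf{Computing $\dgc(H)$ for $H\in\mathcal H_4$.} Here I would use sandwiching with Observation~\ref{obs:r2todgc}. For the lower bound, note that $\Delta(H)=3$ and $H\notin \mathcal G_1\cup\mathcal G_2\cup\{G_3\}$. Then $H$ either contains a cycle, in which case Lemma~\ref{CycPen} gives $\dgc(H)\ge 6$, or $H$ is a tree. In the tree case I would exhibit $G_4$ or $G_{2,2,7}$ as a subgraph. For the upper bound, the key point is that in $L(H)$ the vertices of degree at least $3$ are the two or three edge-vertices at each degree-$3$ vertex. Since degree-$3$ vertices of $H$ are at distance at least $3$, these local clusters are separated by degree-$2$ paths. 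Computing $L^k(H)$ locally around one cluster therefore gives the same maximum degree as $L^k(G_{2,2,7})$ or $L^k(CP_n)$. One way to make this rigorous is to show that $L^j(H)$ for small $j$ is a union of pieces, each isomorphic to a subgraph of $L^j$ of a fixed graph $F$ with $\dgc(F)=6$, glued along degree-$2$ paths. By Proposition~\ref{Deguv} the maximum degree is then determined locally. Iterating this shows $\Delta(L^k(H))\le \Delta(L^k(F))$.

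\textbf{Lower bound $\dgc(G)\ge 7$ for all other prolific graphs.} I would split into cases.
\begin{enumerate}
\item If $\Delta(G)\ge4$, or two degree-$3$ vertices are adjacent, or a degree-$3$ vertex has three neighbours of degree at least $2$, then Lemmas~\ref{Deg4Above}, \ref{Deg3toDeg3} and~\ref{Deg3to3deg2} give $\dgc\ge 8$.
\item Otherwise $\Delta(G)=3$, and every degree-$3$ vertex has a pendant neighbour and no degree-$3$ neighbour. Removing the pendant leaves gives a graph of maximum degree at most $2$, so $G$ is a path or cycle $\tilde H$ with pendant vertices attached. Hence $G\in\mathcal H_4'$ unless two degree-$3$ vertices lie at distance exactly $2$.
\item In that remaining case, $G$ contains a fixed small graph $F_7$: two degree-$3$ vertices sharing a common neighbour, each with its pendant. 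Here I would compute $\dgc(F_7)\ge 7$ directly via Lemma~\ref{LongDeltaPath} or Lemma~\ref{lem:DeltaOrMoreTriangle}. A few small exceptions, namely the trees with a short longest path and the graphs of $\mathcal G_1$, $\mathcal G_2$ and $G_3$, are handled as in the earlier lemmas.
\end{enumerate}
Finally, combining with Lemma~\ref{G3M} gives the stated minimum of $6$.

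\textbf{Main obstacle.} The hardest step is the upper bound $\dgc(H)\le 6$ uniformly over the infinite family $\mathcal H_4$, since $\le_{a,b}$ only gives lower bounds. I would first try to prove a locality lemma: if every vertex of $L^j(H)$ with degree at least $3$ has its radius-$r$ neighbourhood isomorphic to one in $L^j(F)$, then $\Delta(L^{j+t}(H))=\Delta(L^{j+t}(F))$ for $t\le r$. I would then show that the $\Delta$-triangle appearing in $L^3$ already fixes the growth via Theorem~\ref{MDGPIDeltaCycle}. This second step is cleaner: once $L^3(H)$ has a $\Delta$-triangle with $\Delta=5$, the exact value $\dgc(H)=6$ follows without any further comparison.
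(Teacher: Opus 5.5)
There are two genuine gaps, one in each nontrivial part of your plan. Your computation of $\dgc(G_4)$ via Lemma~\ref{LongDeltaPath} applied to $L(G_4)$ is fine and matches the paper.

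\textbf{The bound $\dgc(G)\ge 7$.} The configuration you call $F_7$ is two degree-$3$ vertices with a common neighbour, each carrying a pendant and a third neighbour. That is exactly $G_4$. Indeed $G_4\cong G_{2,2,7}$, so the two graphs you treat separately coincide. Since $\dgc(G_4)=6$, the step ``compute $\dgc(F_7)\ge 7$ directly'' is false, and containing this configuration cannot by itself force $7$.

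What you need is to use $G\neq G_4$: then $G$ properly contains $G_4$, and you must enumerate the minimal proper extensions.
\begin{itemize}
\item Attaching a new vertex to a leaf gives $G_4'$. The three degree-$3$ vertices of $L(G_4')$ induce a path, so Lemma~\ref{LongDeltaPath} gives $\dgc(G_4')=7$.
\item Joining two leaves hanging at different degree-$3$ vertices gives $G_4''$, with $\dgc(G_4'')=7.5$.
\item Every other extension triggers Lemma~\ref{Deg4Above}, \ref{Deg3toDeg3} or \ref{Deg3to3deg2}.
\end{itemize}
You also overlook $H_s$, a $4$-cycle with pendants at two opposite vertices. It has two degree-$3$ vertices at distance $2$ but does not contain $G_4$, and Lemma~\ref{CycPen} only gives $6$ for it. It needs a separate check: $L(H_s)$ has a $\Delta$-cycle with $\Delta=3$, so $\dgc(H_s)=8$. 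The exceptions you list ($\mathcal G_1$, $\mathcal G_2$, $G_3$) are not the relevant ones here, since none of them has two degree-$3$ vertices at distance $2$.

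\textbf{The value $\dgc(H)=6$ on $\mathcal H_4$.} Your tree lower bound cannot work. $G_4\cong G_{2,2,7}$ has two degree-$3$ vertices at distance $2$, whereas in every $H\in\mathcal H_4$ the degree-$3$ vertices are pairwise at distance at least $3$. So this graph is never a subgraph of $H$. For the same reason it is the wrong comparison graph for your locality lemma, which in any case is left unproven.

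The missing idea is a direct computation of $L^3(H)$:
\begin{enumerate}
\item $\Delta(L(H))=3$, and the degree-$3$ vertices of $L(H)$ are the edges joining a degree-$3$ vertex to a degree-$2$ vertex.
\item Because $H\notin\mathcal G_1\cup\mathcal G_2$, some degree-$3$ vertex has two degree-$2$ neighbours, so $\Delta(L^2(H))=4$.
\item The distance-$\ge 3$ condition forces two adjacent degree-$3$ vertices of $L(H)$ to correspond to edges sharing their degree-$3$ endpoint. This makes the degree-$4$ vertices of $L^2(H)$ pairwise non-adjacent, so $\Delta(L^3(H))\le 4+3-2=5$ by Proposition~\ref{Deguv}.
\item Since $H\neq G_3$, some degree-$4$ vertex $z$ of $L^2(H)$ has three degree-$3$ neighbours. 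The three corresponding edges at $z$ become a triangle of degree-$5$ vertices in $L^3(H)$.
\end{enumerate}
Observation~\ref{DirectFormula} then yields $\dgc(H)=6$ exactly, with no sandwiching needed. Your closing remark points toward this route. The step your proposal never supplies is the non-adjacency of the maximum-degree vertices of $L^2(H)$, which is what caps $\Delta(L^3(H))$ at $5$.
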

    
\begin{proof}
    Consider the graph $L(G)$ for a graph $G\in\mathcal H_4\cup \{G_4\}$. If $G=G_4$, then $L(G)$ has its $\Delta$-vertices induce a path of $2$ vertices in $L(G)$ with $\Delta(L(G))=3$, and there are $4$ edges connecting a $\Delta$-vertex to a $(\Delta-1)$-vertex in $L(G)$ (Figure~\ref{fig:L(G_4)}). So, by Lemma~\ref{LongDeltaPath}, $L^3(G)$ has a $\Delta$-triangle with $\Delta(L^3(G))=5$. Otherwise, $G\in \mathcal H_4$. Then we show the following.

    \begin{figure}
        \centering
        \includegraphics[width=0.3\linewidth]{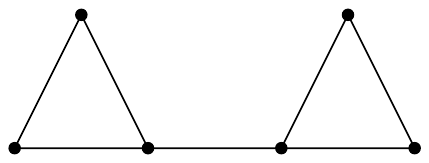}
        \caption{The graph $L(G_4)$.}
        \label{fig:L(G_4)}
    \end{figure}
    
    \begin{claim}
        For every $G \in \mathcal H_4$, we have $\Delta(L^3(G)) = 5$ and $L^3(G)$ has a $\Delta$-cycle.
    \end{claim}
    \begin{claimproof}
        Since $G \in \mathcal H_4$, $G$ contains the following: (i) a cycle or a path $\tilde{H}$ containing the vertices in order $v_1, v_2, \ldots, v_n$, (ii) every remaining vertex is a pendant vertex with its neighbour in $\tilde{H}$, and (iii) the distance between any two degree $3$ vertices is at least $3$. If $\tilde H$ is a cycle, we may assume without loss of generality that degrees of $v_1$ and $v_n$ are $2$ (otherwise we can rotate the labels of the cycle to satisfy this condition), and we define $v_{n+1} = v_1$ and $v_0 = v_n$. If $\tilde{H}$ is a path, then we may assume $v_1$ and $v_n$ to have degrees $1$, since otherwise we have a longer choice of $\tilde H$ such that the endpoints of $\tilde H$ have degree $1$. Let the pendant vertices be $u_{a_1}, u_{a_2}, \ldots, u_{a_t}$, such that (i) $2 \le a_1$, (ii) $a_t \le (n-1)$, (iii) $a_{i+1} - a_{i} \ge 3$ for $i \in \{1, 2, \ldots, t - 1\}$, and (iv) $u_{a_i}$ is adjacent to $v_{a_i}$ for $i \in \{1, 2, \ldots, t\}$.

        We now analyze the structure of $L(G)$ (Figure~\ref{L(H4)}). In $L(G)$, the vertices $x_1, x_2, \ldots, x_{n-1}$, corresponding to the edges $v_1v_2, v_2v_3, \ldots, v_{n}v_{n-1} \in E(G)$ induce a path. Moreover, if $\tilde H$ is a cycle, then there exists $x_n \in V(L(G))$ adjacent to both $x_1$ and $x_{n-1}$, corresponding to the edge $v_nv_1$. For $i \in \{1, \ldots, t\}$, $L(G)$ contains the vertices $y_{a_i}$ corresponding to the edge $v_{a_i}u_{a_i} \in E(G)$. $y_{a_i}$ is adjacent to $x_{a_i}$ and $x_{a_i + 1}$. This completes the description of $L(G)$. Notice that $\Delta(L(G)) = 3$. 
        
        If $\tilde H$ is a cycle, then the $\Delta$-vertices are precisely $x_{a_1}, x_{a_1 + 1}, x_{a_2}, x_{a_2 + 1}, \ldots, x_{a_t}, x_{a_t + 1}$. Moreover $x_{a_1 - 1}, x_{a_2 - 1}, \ldots x_{a_t - 1}$ are degree $2$ vertices. 
        
        If $\tilde H$ is a path, then the $\Delta$-vertices are precisely $x_{a_i}$ for every $i$ such that $a_i \ge 3$ and $x_{a_i + 1}$ for every $i$ such that $a_i \le n - 2$. In such a case, there always exists an $i$ such that $3 \le a_i \le n - 2$ as $G \notin \mathcal G_2$. Therefore there exists $i$ such that $x_{a_i}$ and $x_{a_i +1}$ are both $\Delta$-vertices of $L(G)$. Further, as $G \ne G_3$, either $x_{a_i - 1}$ or $x_{a_i + 2}$ is a degree $2$ vertex. 

        From these two paragraphs, we get that there exists $i$ such that $x_{a_i}$ and $x_{a_i + 1}$ are $\Delta$-vertices of $L(G)$ with degree $3$ and there exists vertex $x' \in \{x_{a_i - 1}, x_{a_i + 2}\}$ of degree $2$. Let $e$ be the edge between $x'$ and one of $x_{a_i}$ or $x_{a_i + 1}$. Firstly this implies $\Delta(L^2(G)) = 2\Delta(L(G)) - 2 = 4$ by Proposition~\ref{Deguv} where the vertex $z \in V(L^2(G))$ corresponding to $x_{a_i}x_{a_i + 1} \in E(L(G))$ is a $\Delta$-vertex. Secondly, the vertices $p, q, r \in V(L^2(G))$ corresponding to $x_{a_i}y_{a_i}, x_{a_i+1}y_{a_i}, e \in E(L(G))$, are $(\Delta-1)$-vertices having degree $3$ (by Proposition~\ref{Deguv}) and are adjacent to $z$. Finally no two $\Delta$-vertices of $L^2(G)$ are adjacent, as $a_{i+1} - a_i \ge 3$ for all $i \in \{1, \ldots, t-1\}$.

        These imply that $\Delta(L^3(G)) \le \Delta(L^2(G)) + (\Delta(L^2(G)) - 1)- 2 = 5$ (by Proposition~\ref{Deguv}). However, the vertices $\alpha, \beta, \gamma \in V(L^3(G))$ corresponding to the edges $pz, qz, rz \in E(L^2(G))$ have degree $5$ (by Proposition~\ref{Deguv}). Therefore, $\alpha, \beta, \gamma$ form a $\Delta$-triangle of $L^3(G)$ with $\Delta(L^3(G)) = 5$.
    \end{claimproof}
    
    So, for $G \in \{G_4\} \cup \mathcal H_4$, $L^3(G)$ has a $\Delta$-triangle with $\Delta(L^3(G))=5$. Hence, $\Delta(L^k(G))=3\cdot2^{k-3}+2=6\cdot2^{k-4}+2$ for all $k\geq3$ by Observation \ref{DirectFormula}. Therefore, for $G_4$ and for every $H\in\mathcal H_4$, we have $\dgc(G_4)=\dgc(H)=6$. All that remains is to prove the following claim.

    \begin{claim}\label{clm:ge7}
        For every graph $G\in\mathcal G\setminus(\{G_1\} \cup \{G_{2,1,n}\mid n\ge 6\} \cup \{G_{2,2,n}\mid n\ge 8\} \cup\{G_3\} \cup \{G_4\}\cup\mathcal H_4)$, $\dgc(G)\ge 7$.
    \end{claim}

    \begin{claimproof}
        We have already shown in Lemma~\ref{Deg4Above}, Lemma~\ref{Deg3to3deg2} and Lemma~\ref{Deg3toDeg3}, that if $G$  is a graph satisfying one of the following conditions, (i) $\Delta(G)\geq4$, (ii) $G$ contains a degree $3$ vertex adjacent to three degree $\geq 2$ vertices or (iii) $G$ contains a degree $3$ vertex adjacent to a degree $3$ vertex, then $\dgc(G) \ge 8$.
    
        Now we only look into graphs $G$ not satisfying (i), (ii) and (iii). Since $G$ does not satisfy (i) and (ii), it must be of the following form: $G$ contains a path or a cycle $\tilde{H}$ and one or more pendant vertices adjacent to distinct vertices of $\tilde{H}$. If $G$ has only one pendant vertex, or if the distance between any two degree $3$ vertices of $G$ is at least than $3$, then $G\in\mathcal H_4$. Otherwise, if the distance between any two degree $3$ vertices of $G$ is equal to $1$, then $G$ falls in condition (iii). So, it means there exist two vertices of degree $3$ vertices of $G$ with distance exactly $2$ between them. If $\tilde{H}$ is a cycle of length $4$, then $G$ isomorphic to $H_{s}$ (Figure~\ref{fig:H_s}). However, $L(H_s)$ has a $\Delta$-cycle where $\Delta(L(H_{s})) = 3$ (Figure~\ref{fig:L(H_s)}) and by Lemma~\ref{DirectFormula} we have $\Delta(L^k(G)) = (3 - 2) \cdot 2^{k-1} + 2 = 8 \cdot 2^{k-4} + 2$ for $k \ge 1$, i.e. $\dgc(H_{s}) = 8$. Otherwise, if $\tilde{H}$ is a path or a cycle of at least $5$ vertices, we have $G_4 \subseteq G$. We now analyze the following cases for $G_4 \subseteq G$.

    \begin{figure} [!htbp]
        \begin{minipage}[t]{0.49\textwidth}  
            \centering 
            \includegraphics[width = 0.7\textwidth]{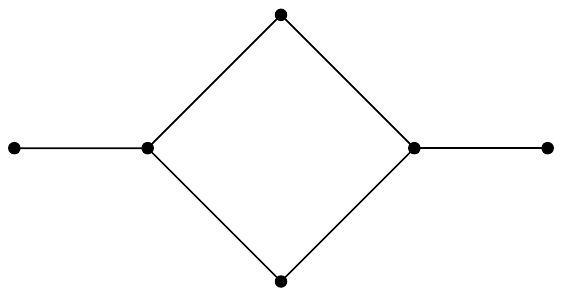}
            \subcaption{The graph $H_s$.} \label{fig:H_s}
        \end{minipage} \hfill \hfill 
        \begin{minipage}[t]{0.49\textwidth}
            \centering
            \includegraphics[width = 0.7\textwidth]{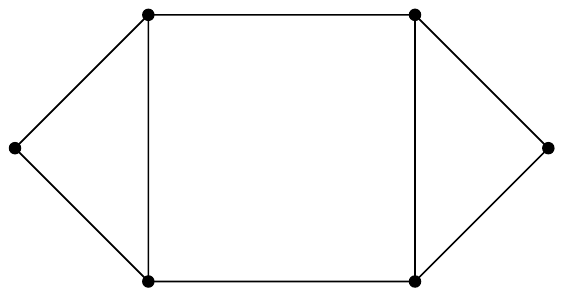}
            \subcaption{The graph $L(H_s)$.} \label{fig:L(H_s)}
        \end{minipage}
        \caption{Graphs $H_s$ and $L(H_s)$.}
    \end{figure}
        
        Consider constructing $G$ by adding vertices and edges to an initial copy of $G_4$; we call the edges and vertices not appearing in the initial copy of $G_4$ as additional edges and additional vertices, respectively. Let the vertices of $G_4$ be labeled as in Figure~\ref{G4}. Now, if we add an edge and/or vertex to $G_4$, it can lead to the following cases:
        
            \subparagraph*{Case (a): An additional vertex is adjacent to one of the four degree $1$ vertices (i.e. vertices labeled $4, 5, 6$ and $7$ in Figure~\ref{G4}).} Let $G_4'$ be the graph obtained by adding this additional vertex to one of the degree $1$ vertices in $G_4$, then $G_4' \subseteq G$ (Figure~\ref{fig:G_4'}). $\Delta(L(G'_4))=3$ and $L(G'_4)$ has its $\Delta$-vertices induce a path of $3$ vertices with at least $3$ edges connecting a $\Delta$-vertex to a $(\Delta-1)$-vertex (Figure~\ref{fig:L(G_4')}). So, by Lemma~\ref{LongDeltaPath}, $L^4(G'_4)$ has a $\Delta$-triangle with $\Delta(L^4(G_4'))=9$. So, by Observation~\ref{DirectFormula}, $\Delta(L^k(G_4'))=7\cdot2^{k-4}+2$ for all $k\geq 4$, therefore $\dgc(G) \ge \dgc(G_4')= 7$.
            \subparagraph*{Case (b): An additional edge is adjacent to one of the $2$ degree $3$ vertices (i.e. vertices labelled $2$ and $3$).} In this case, resulting graph has maximum degree equal to $4$, and hence by Lemma~\ref{Deg4Above}, $\dgc(G)\ge 8$.
            \subparagraph*{Case (c): An additional edge is adjacent to the degree $2$ vertex (i.e. vertex labelled $1$).} In this case, resulting graph has a degree $3$ vertex adjacent to another degree $3$ vertex, and hence by Lemma~\ref{Deg3toDeg3}, $\dgc(G)\ge 8$.
            \subparagraph*{Case (d): An additional edge connects a degree $1$ vertex to another degree $1$ vertex adjacent to the same degree $3$ vertex (i.e. between vertices labelled $4$ and $5$ or $6$ and $7$).} In this case, resulting graph has a degree $3$ vertex adjacent to $3$ degree $2$ vertices, and hence by Lemma~\ref{Deg3to3deg2}, $\dgc(G)\ge 8$.
            \subparagraph*{Case (e): An additional edge connects a degree $1$ vertex to another degree $1$ vertex which in turn is adjacent to the other degree $3$ vertex (i.e. between vertices labelled $4$ and $6$, $4$ and $7$, $5$ and $6$ or $5$ and $7$).} Let $G_4''$ be the graph obtained by adding this additional edge between two of the degree $1$ vertices which in turn are adjacent to distinct degree $3$ vertices in $G_4$ (Figure~\ref{fig:G_4''}), then $G_4''\subseteq G$. $\Delta(L(G''_4))=3$ and $L(G_4'')$ has its $\Delta$-vertices induce a path of length $4$ in $L(G_4'')$ with at least $3$ edges connecting a $\Delta$-vertex to a $(\Delta-1)$-vertex (Figure~\ref{fig:L(G_4'')}). So, by Lemma~\ref{LongDeltaPath}, $L^5(G''_4)$ has a $\Delta$-triangle with $\Delta(L^5(G_4''))=17$. So, by Observation~\ref{DirectFormula}, $\Delta(L^k(G_4''))=7.5\cdot2^{k-4}+2$ for all $k\geq 5$ and hence, $\dgc(G)\ge\dgc(G_4'')=7.5$.

    \begin{figure} [!htbp]
        \begin{minipage}[t]{0.49\textwidth}  
            \centering 
            \includegraphics[width = 0.7\textwidth]{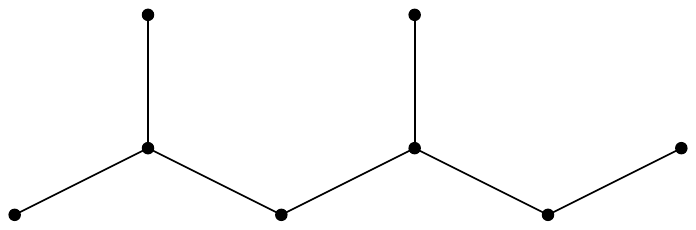}
            \subcaption{The graph $G_4'$.} \label{fig:G_4'}
        \end{minipage} \hfill \hfill 
        \begin{minipage}[t]{0.49\textwidth}
            \centering
            \includegraphics[width = 0.7\textwidth]{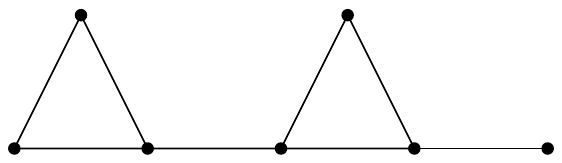}
            \subcaption{The graph $L(G_4')$.} \label{fig:L(G_4')}
        \end{minipage} \\
        \begin{minipage}[t]{0.49\textwidth} 
            \centering 
            \includegraphics[width = 0.5\textwidth]{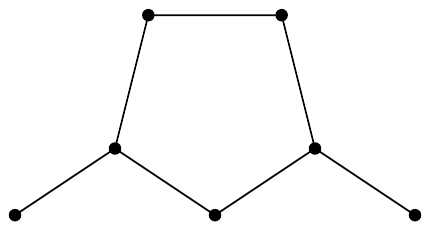}
            \subcaption{The graph $G_4''$.} \label{fig:G_4''}
        \end{minipage} \hfill \hfill 
        \begin{minipage}[t]{0.49\textwidth}
            \centering
            \includegraphics[width = 0.5\textwidth]{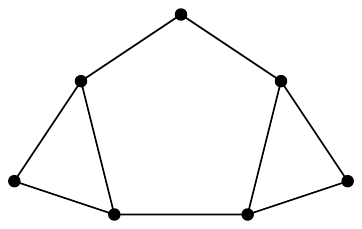}
            \subcaption{The graph $L(G_4'')$.} \label{fig:L(G_4'')}
        \end{minipage}
        \caption{Graphs $G_4'$, $L(G_4')$, $G_4''$ and $L(G_4'')$}
    \end{figure}

        So, adding any edge and/or vertex to graph $G_4$ leads to the resulting graph $G_4'$ having $\Delta(L^k(G_4'))>6\cdot 2^{k-4}+2$ for all $k\geq 5$. Since any graph $G$ containing $G_4$ as a proper subgraph will contain one of the above cases as a subgraph. Hence, by Lemma~\ref{Comp1}, $\Delta(L^k(G))>6\cdot 2^{k-4}+2$.
    \end{claimproof}

    So, we have proven the claim. Hence, $\dgc(G)\ge 7$ for all $G\in\mathcal G\setminus(\{G_1\} \cup \{G_{2,1,n}\mid n\ge 6\} \cup \{G_{2,2,n}\mid n\ge 8\} \cup\{G_3\} \cup \{G_4\}\cup\mathcal H_4)$.
\end{proof}

Not only does this prove that the fourth minimum of $\{\dgc(G) \mid G \in \mathcal G\}$ is $c_4 = 6$, this shows, that $\{\dgc(G) \mid G \in \mathcal G, 6 < \dgc(G) < 7\} = \emptyset$. Moreover, the proof also implies the exact classification of graphs $\mathcal G_3$ and $\mathcal{G_4}$ attaining $\dgc(G) = c_3$ and $\dgc(G) = c_4$ respectively. We state this formally as follows.

\begin{theorem}\label{thm:G3}
    $\mathcal G_3= \{G \mid G \in \mathcal G, \dgc(G) = c_3\} = \{G_3\}$ and $\mathcal G_4= \{G \mid G \in \mathcal G, \dgc(G) = c_4\} = \{G_4\} \cup \mathcal H_4$.
\end{theorem}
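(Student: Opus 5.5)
This theorem packages Lemma~\ref{G3M} and Theorem~\ref{G4M}, so the plan is a double-inclusion argument for each class using the monotone chain of lower bounds already established. For $\mathcal G_3$, Lemma~\ref{G3M} gives $\dgc(G_3)=5.5$, so $G_3\in\mathcal G_3$. For the reverse inclusion, take $G\in\mathcal G$ with $\dgc(G)=5.5$. Then:
\begin{itemize}
\item $G\neq G_1$, since $\dgc(G_1)=3$ by Lemma~\ref{G1M};
\item $G\notin\mathcal G_2$, since every member of $\mathcal G_2$ has $\dgc=4$ by Theorem~\ref{G2G};
\item by Lemma~\ref{G3M}, $\dgc(G)\ge 5.5$, with the remaining acyclic, $\Delta=3$ cases all landing either on $G_3$ itself or on graphs with $\dgc\ge 6$.
\end{itemize}
The point to check carefully is that Lemma~\ref{G3M} in fact shows $\dgc(G)>5.5$ for every $G\neq G_3$ outside $\{G_1\}\cup\mathcal G_2$, not merely $\ge 5.5$. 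I would get this from Theorem~\ref{G4M}, which says every $G$ outside $\mathcal G_1\cup\mathcal G_2\cup\{G_3\}$ has $\dgc(G)\ge 6$ (either $6$ for $\{G_4\}\cup\mathcal H_4$, or $\ge7$ otherwise). Combining the two, the only prolific graph with $\dgc=5.5$ is $G_3$.

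For $\mathcal G_4$, Theorem~\ref{G4M} gives $\dgc(G_4)=\dgc(H)=6$ for all $H\in\mathcal H_4$, which is one inclusion. For the converse, suppose $\dgc(G)=6$. The values $3$, $4$ and $5.5$ exclude $G_1$, $\mathcal G_2$ and $G_3$. Claim~\ref{clm:ge7} inside Theorem~\ref{G4M} then forces $\dgc(G)\ge7$ unless $G\in\{G_4\}\cup\mathcal H_4$. Hence $G\in\{G_4\}\cup\mathcal H_4$.

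The main obstacle is making sure the case analyses behind these bounds really are exhaustive. In particular, the cases of Lemma~\ref{G3M} and the case split in Claim~\ref{clm:ge7} into $\mathcal H_4$, $H_s$, and supergraphs of $G_4$ must cover every prolific graph. There is also a subtlety to check: the definition of $\mathcal H_4$ excludes $\mathcal G_1\cup\mathcal G_2\cup\{G_3\}$, so graphs of the $\mathcal H_4'$ shape that coincide with $G_3$ or a member of $\mathcal G_2$ are correctly assigned to the smaller constants rather than double counted. With that verified, both equalities follow directly.
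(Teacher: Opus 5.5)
Your proposal is correct and is essentially the paper's own argument. The paper likewise obtains both equalities by combining $\dgc(G_3)=5.5$ from Lemma~\ref{G3M} with Theorem~\ref{G4M}, which gives $\dgc=6$ on $\{G_4\}\cup\mathcal H_4$ and $\dgc\ge 7$ on the rest of $\mathcal G\setminus(\mathcal G_1\cup\mathcal G_2\cup\{G_3\})$. As you noticed, the strict separation from $5.5$ must come from Theorem~\ref{G4M}: case (a) of Lemma~\ref{G3M} only shows $G_3\subseteq G$, hence $\dgc(G)\ge 5.5$, so your third bullet slightly overstates what that lemma proves.
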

\begin{proof} 
    Every graph in $\mathcal G\setminus(\{G_1\}\cup\{G_{2,1,n} \mid n\geq6\}\cup\{G_{2,2,n}\mid n\geq8\}\cup G_3)$ satisfies $\dgc(G) \ge 6$; in particular every graph $G\in\mathcal G\setminus(\{G_1\} \cup \{G_{2,1,n}\mid n\ge 6\} \cup \{G_{2,2,n}\mid n\ge 8\} \cup\{G_3\} \cup \{G_4\}\cup\mathcal H_4)$, has $\dgc(G)\ge 7$ as shown in Theorem~\ref{G4M}. Therefore $\mathcal G_3 = \{G_3\} = \{G \mid G \in \mathcal G, \dgc(G) = c_3\}$ by Lemma~\ref{G3M} and $\mathcal G_4 = \{G_4\} \cup \mathcal H_4= \{G \mid G \in \mathcal G, \dgc(G) = c_4\}$ by Theorem~\ref{G4M}.
\end{proof}

Theorem~\ref{G4M} shows that $\dgc(G) \ge 7$ for all $G \in \mathcal G \setminus (\mathcal G_1 \cup \mathcal G_2 \cup \mathcal G_3 \cup \mathcal G_4)$. We also show that indeed $c_5 = 7$ is the fifth minimum by exhibiting a graph attaining $\dgc(G) = 7$ (Please refer to the proof of Theorem~\ref{G4M}). 
We restate this result formally as the following for completeness. 

\begin{figure}
    \centering
    \includegraphics[width=0.35\linewidth]{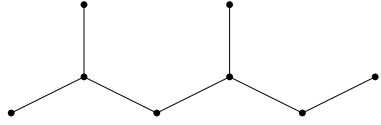}
    \caption{The graph $G_4'$.}
    \label{fig:G_4'-main}
\end{figure}
\begin{theorem}
    Let $G_4'$ be as defined in Figure~\ref{fig:G_4'-main}. Then, $\dgc(G_4') = 7$. Consequently, $\min\{\dgc(G) \mid G \in \mathcal G \setminus (\mathcal G_1 \cup \mathcal G_2 \cup \mathcal G_3 \cup \mathcal G_4)\} = 7$.
\end{theorem}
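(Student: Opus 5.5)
The statement splits into two parts: computing $\dgc(G_4') = 7$, and deducing that $7$ is the minimum of $\dgc$ over $\mathcal G \setminus (\mathcal G_1 \cup \mathcal G_2 \cup \mathcal G_3 \cup \mathcal G_4)$.

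\textbf{Step 1: computing $\dgc(G_4')$.} I follow the computation already sketched in Case (a) of Claim~\ref{clm:ge7}. Here $G_4'$ is $G_4$ with one extra vertex attached to a degree-$1$ vertex (say vertex $4$). I write out $L(G_4')$ explicitly and check three facts:
\begin{itemize}
\item $\Delta(L(G_4')) = 3$, since every edge joins vertices of degree sum at most $5$;
\item the $\Delta$-vertices of $L(G_4')$ are exactly the edges with endpoint degrees summing to $5$, namely $\{1,2\}$, $\{1,3\}$ and $\{2,4\}$, and these induce a path on $3$ vertices;
\item at least three edges of $L(G_4')$ join a degree-$3$ vertex to a degree-$2$ vertex, for instance the vertices for $\{2,5\}$, $\{3,6\}$ and $\{3,7\}$ adjacent to the path ends.
\end{itemize}
Lemma~\ref{LongDeltaPath}, applied to $L(G_4')$ with $l = 3$, then gives a $\Delta$-triangle in $L^3(L(G_4')) = L^4(G_4')$ with
\[
\Delta(L^4(G_4')) = (3-2)\cdot 2^3 + 1 = 9 .
\]
By Observation~\ref{DirectFormula}, for all $k \ge 4$,
\[
\Delta(L^k(G_4')) = 7 \cdot 2^{k-4} + 2,
\]
so $\dgc(G_4') = 7$ by uniqueness in Proposition~\ref{prop:dgc}.

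\textbf{Step 2: membership in the complement.} Having $\dgc(G_4') = 7$ already settles this, because the classes $\mathcal G_1, \ldots, \mathcal G_4$ are defined by the values $3, 4, 5.5, 6$, none of which is $7$. As a structural sanity check, $G_4'$ is also not in $\mathcal H_4$: its two degree-$3$ vertices are at distance $2$.

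\textbf{Step 3: the minimum.} Claim~\ref{clm:ge7} together with Theorem~\ref{thm:G3} shows $\dgc(G) \ge 7$ for every $G$ outside $\mathcal G_1 \cup \mathcal G_2 \cup \mathcal G_3 \cup \mathcal G_4$. Since $G_4'$ lies in this complement and attains $7$, the minimum exists and equals $7$.

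The only real work is verifying the hypotheses of Lemma~\ref{LongDeltaPath} for $L(G_4')$. The point to check is that the $\Delta$-vertices induce exactly a path, with no chord and no further degree-$3$ vertex. This requires the correct labelling of $G_4$ from Figure~\ref{G4}: vertex $1$ of degree $2$ is adjacent to $2$ and $3$; leaves $4,5$ hang on $2$; leaves $6,7$ hang on $3$. With that labelling the edge list of $L(G_4')$ is short enough to check by hand, so I expect no serious obstacle.
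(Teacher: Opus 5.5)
Your proposal is correct and follows the paper's own argument. The paper gets $\dgc(G_4')=7$ from Case~(a) in the proof of Claim~\ref{clm:ge7}: the $\Delta$-vertices of $L(G_4')$ induce a $3$-vertex path, Lemma~\ref{LongDeltaPath} yields a $\Delta$-triangle of degree $9$ in $L^4(G_4')$, and Observation~\ref{DirectFormula} finishes the computation. It then obtains the minimum by combining this with the bound $\dgc(G)\ge 7$ from Claim~\ref{clm:ge7}, which covers every graph outside $\mathcal G_1\cup\cdots\cup\mathcal G_4$.

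Your explicit check of $L(G_4')$ is accurate and fills in what the paper leaves to its figure: the path $\{1,3\},\{1,2\},\{2,4\}$, and the three edges to $\{2,5\},\{3,6\},\{3,7\}$.
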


Therefore, we deduce the first five minimums $c_1 = 3$, $c_2 = 4$, $c_3 = 5.5$, $c_4 = 6$, and $c_5 = 7$ of $\{\dgc(G) \mid G \in \mathcal G\}$ and derive the precise classes of graphs $\mathcal G_1$, $\mathcal G_2$, $\mathcal G_3$ and $\mathcal G_4$, attaining the first four minimum degree growth constants. Such a result might motivate us to investigate whether the sixth minimum would be $8$. We answer this negatively by showing that there are in fact infinitely many distinct values of $\dgc(G)$ between $7$ and $8$ realized by prolific graphs.

\begin{theorem}\label{infcases}
    Let $L_n$ be a graph in which a pendant is adjacent to every alternate vertex of a path of $2n+1$ vertices for $n\geq 3$ (shown in Figure~\ref{Ln}). We have $\Delta(L^k(L_n))=\big(8-\frac{1}{2^{2n-5}}\big)\cdot2^{k-4}+2$ for all $k\geq 2n-1$. So, $\dgc(L_n)=8-\frac{1}{2^{2n-5}}$, and hence there are infinitely many possible values of $\dgc(G)$, $G\in\mathcal G$ between $7$ and $8$.

    \begin{figure}[ht!]
        \centering
        \includegraphics[width=0.9\textwidth]{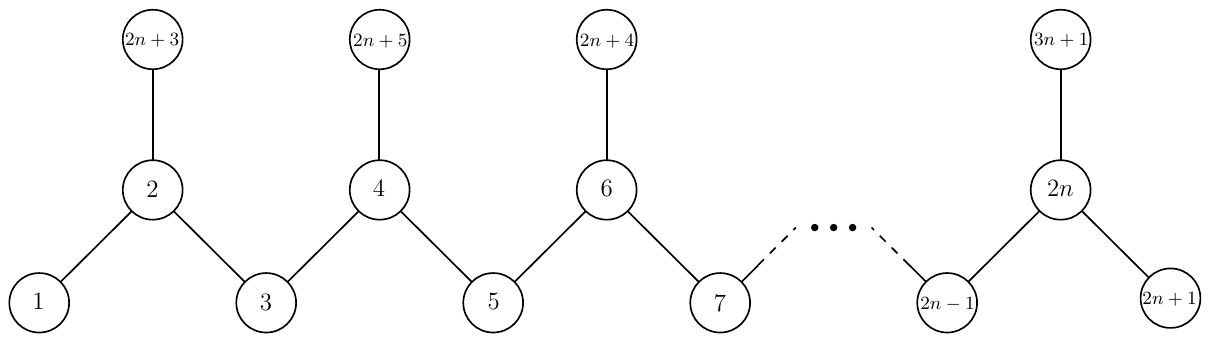}
        \caption{Graph $L_n$}
        \label{Ln}
    \end{figure}
\end{theorem}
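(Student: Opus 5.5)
The plan is to reduce everything to Lemma~\ref{LongDeltaPath} applied to $L(L_n)$, and then read off $\dgc(L_n)$ from Observation~\ref{DirectFormula}. Label the path $v_0, v_1, \ldots, v_{2n}$ and attach the pendants $u_1, u_3, \ldots, u_{2n-1}$ to the odd-indexed vertices. I read ``every alternate vertex'' so that the endpoints carry no pendant; this is the reading under which the formula comes out right. Then $d(v_0)=d(v_{2n})=1$, every odd $v_i$ has degree $3$, every even interior $v_i$ has degree $2$, and every pendant has degree $1$.

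First I describe $L(L_n)$. Let $x_i$ correspond to the path edge $v_{i-1}v_i$ for $1\le i\le 2n$, and let $y_j$ correspond to the pendant edge $v_{2j-1}u_{2j-1}$. By Proposition~\ref{Deguv}:
\begin{itemize}
\item each interior $x_i$ with $2\le i\le 2n-1$ joins a degree-$3$ vertex to a degree-$2$ vertex, so $d(x_i)=3$;
\item $d(x_1)=d(x_{2n})=1+3-2=2$;
\item $d(y_j)=3+1-2=2$.
\end{itemize}
Hence $\Delta(L(L_n))=3$, and the $\Delta$-vertices are exactly $x_2,\ldots,x_{2n-1}$. Two path edges share an endpoint only when they are consecutive, so these vertices induce a path with $2n-2$ vertices, i.e.\ of length $2n-3$. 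For condition~2 of Lemma~\ref{LongDeltaPath}, the edges $x_1x_2$ and $x_{2n-1}x_{2n}$ each join a $\Delta$-vertex to a $(\Delta-1)$-vertex. So do $y_jx_{2j-1}$ and $y_jx_{2j}$ whenever the $x$-endpoint is interior, and for $n\ge 3$ there are many of these. So at least three such edges exist.

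Next I apply Lemma~\ref{LongDeltaPath} to the graph $L(L_n)$ with $l=2n-2$. This gives that $L^{2n-2}(L(L_n))=L^{2n-1}(L_n)$ contains a $\Delta$-triangle, with
\[\Delta(L^{2n-1}(L_n))=(3-2)\cdot 2^{2n-2}+1 .\]
Observation~\ref{DirectFormula} then yields, for all $k\ge 2n-1$,
\[\Delta(L^k(L_n))=(2^{2n-2}-1)\,2^{k-2n+1}+2=\bigl(2^{2n-2}-1\bigr)2^{5-2n}\cdot 2^{k-4}+2=\Bigl(8-\tfrac{1}{2^{2n-5}}\Bigr)2^{k-4}+2 .\]
This is exactly the claimed formula, so $\dgc(L_n)=8-2^{5-2n}$ by Proposition~\ref{prop:dgc}.

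Finally, the map $n\mapsto 8-2^{5-2n}$ is strictly increasing for $n\ge 3$. Its values start at $7.5$ and stay strictly below $8$, so they are pairwise distinct and all lie in $(7,8)$. This gives infinitely many distinct values of $\dgc$ in that interval. The set $\{\dgc(G) \mid G\in\mathcal G\}$ is countable because there are only countably many finite graphs, so the set of values in $(7,8)$ is countably infinite.

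The main obstacle is only the bookkeeping in verifying the hypotheses of Lemma~\ref{LongDeltaPath}. Two points need care: that no $y_j$ attains degree $3$ (it does not, since pendants have degree $1$), and that the $\Delta$-vertices induce a path rather than a graph with chords. Both follow directly from the degree computation above.
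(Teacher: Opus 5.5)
Your proposal is correct and follows essentially the same argument as the paper. You identify the $2n-2$ degree-$3$ vertices of $L(L_n)$ inducing a path and check that there are at least three $\Delta$-to-$(\Delta-1)$ edges (the paper counts exactly $2n$, which agrees with your description). You then apply Lemma~\ref{LongDeltaPath} with $l=2n-2$ and finish with Observation~\ref{DirectFormula}, reaching the same arithmetic and the same bound $\dgc(L_n)=8-2^{5-2n}$.
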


\begin{proof}   
    The graph $L(L_n)$ (Shown in Figure~\ref{LLn}) has $(2n-2)$ $\Delta$-vertices that induce a path in $L(L_n)$. Also, there are $2n$ edges that connect a $\Delta$-vertex to a $(\Delta-1)$-vertex in $L(L_n)$. So, for $n\geq 3$, by Lemma~\ref{LongDeltaPath}, $L^{2n-2}(L(L_n))=L^{2n-1}(L_n)$ has a $\Delta$-triangle with $\Delta(L^{2n-1}(L_n))=2^{2n-2}+1$. So, by Observation~\ref{DirectFormula}, we have $\Delta(L^k(G))=(2^{2n-2}-1)\cdot 2^{k-2n+1}+2=\big(8-\frac{1}{2^{2n-5}}\big)\cdot2^{k-4}+2$, and therefore $\dgc(L_n)=8-\frac{1}{2^{2n-5}}$.
    \begin{figure}[ht!]
        \centering
        \includegraphics[width=0.9\textwidth]{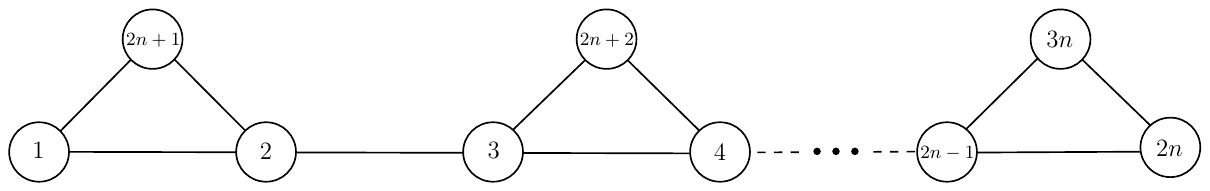}
        \caption{Graph $L(L_n)$}
        \label{LLn}
    \end{figure}
\end{proof}

\section{Open Questions}


 This paper examines two structural properties of iterated line graphs: (i) existence of Euler paths and (ii) growth of maximum degree. In future, we hope to extend the understanding of iterated higher order line graphs with respect to other graph properties that are efficiently checkable, e.g., $3$-connectedness, etc. 
 Note that we characterize the precise graphs attaining the first four minimum values of the degree growth constant, but it remains open to obtain the precise graphs attaining the fifth minimum of $7$. An efficient algorithm to compute the $\dgc(G)$ of an input prolific graph $G$ remains an open problem. Another open algorithmic question would be to design efficient algorithms (or provide lower bounds) to compute the maximum degree of an iterated higher order line graph for some input graph $G$. 

\bibliography{Source_Files/Bibliography}
\end{document}